\definecolor{green}{RGB}{10, 150, 0}
\newtheorem{theorem}{Theorem}[section]
\newtheorem{proposition}[theorem]{Proposition}
\newtheorem{lemma}[theorem]{Lemma}
\newtheorem{corollary}[theorem]{Corollary}
\theoremstyle{definition}
\newtheorem{definition}[theorem]{Definition}
\newtheorem{example}[theorem]{Example}
\newtheorem{problem}[theorem]{Problem}
\theoremstyle{remark}
\newtheorem{remark}[theorem]{Remark}
\newcommand{\mscomm}[1]{\begingroup\color{green}#1\endgroup}
\long\def\comment#1\endcomment{}
\newcommand{\newnew}[1]{#1}
\newcommand{\bluenew}[1]{#1}
\newcommand{\bluevarnew}[1]{#1}
\newcommand{\bluevar}[1]{#1}
\begin{document}

	\title{Incidences, tilings, and fields}
	\author{ P.~Pylyavskyy,  M.~Skopenkov}
	\date{}
	\maketitle
        \begin{abstract}
            The master theorem, introduced by Richter-Gebert and \bluenew{generalized} by Fomin and the first author, provides a method for proving incidence theorems of projective geometry using triangular tilings of surfaces. We investigate which incidence theorems over ${\mathbb{C}}$ and ${\mathbb{R}}$ can or cannot be proved via the master theorem. For this, we formalize the notion of a tiling proof. We introduce a hierarchy of classes of theorems based on the underlying topological spaces. A key tool is considering the same theorems over finite fields.
            %
        \end{abstract}
	\let\thefootnote\relax\footnotetext{This work is supported by   KAUST baseline funding.
    }
    \footnotetext{\textbf{Keywords and phrases.} Incidence theorem, Ceva-Menelaus proof, simplicial complex, grope, excision, finite field}

    \footnotetext{\textbf{MSC2020:} 51A20, 05E14, 14N20, 51M15}

\section{Introduction}
\label{sec:intro}


Incidence theorems about points and lines in the plane are at the core of projective geometry. Their variety is boundless. 
They have been linked to basic algebraic \cite{Hartshorne}, rational \cite{Amitsur-66}, and determinantal \cite{Richter-Gebert-Li-19} identities. See \cite[Chapter~3, Section~9]{ZS21} for an elementary introduction to incidence theorems.

A more recent look at incidence theorems has originated from Coxeter--Greitzer’s proof of Pappus’ theorem by multiple applications of Menelaus's theorem. Richter-Gebert \cite{Richter-Gebert-06} has visualized such proofs as triangular tilings of surfaces, resembling proofs of identities in geometric group theory~\cite{Olshanskii-89}, \bluenew{in particular, Lambek's polyhedral conditions~\cite{Bush-18}, and proof nets~\cite{Lamarche-Retore-98}}. 
Fomin and the first author~\cite{FP22} introduced a similar approach based on quadrilateral tilings 
and obtained numerous classical and new theorems in this way, also in higher dimensions. In what follows, we concentrate fully on triangular tiles, although analogous results should 
hold for quadrilateral tiles. See a quick introduction in Section~\ref{ssec:quick}. 


This has led to the \bluenew{\emph{completeness}} question of whether all incidence theorems arise from tilings. 
This question for triangular tilings was addressed by Barali\'c et al. \cite{Baralic-etal-20} who introduced a formalization of tiling proofs (called the \emph{Menelaus system}) and provided examples of incidence theorems unprovable in their setup. Their formalization was rather restricted (for instance, it did not include the use of the incidence axiom in the proofs), leaving the question of whether \bluenew{completeness} holds in a more refined setup. This was the starting point of the present work; 
see also \cite{Izosimov-26,Izosimov-Pylyavskyy-25,KL} for recent further progress in this direction and~\cite{Richter-Gebert-26} for an overview.

In this paper, we show applications of tiling proofs far beyond the original scope: in addition to generating incidence theorems, we can now 
efficiently construct counterexamples to them, 
and study their dependence on the ground field. 
We give unexpected links of incidence theorems to commutative algebra, geometric group theory, piecewise-linear and algebraic topology, and even lattice gauge theory. One of the main tools borrowed from algebraic geometry is considering the same theorems over finite fields. Our main results are listed in Section~\ref{ssec:contributions}. 

\subsection{Quick Start} \label{ssec:quick}

The concept of a tiling proof of an incidence theorem should be clear from the following two examples, which are restatements of Desargues' and Pappus' theorems.

\begin{example}[Desargues' theorem] \label{ex-desargues} (See Figure~\ref{fig-desargues} to the left.)
    Let $P_1,\dots,P_4\bluevarnew{\in\mathbb{R}^2}$ be four points in the plane such that no three of them are collinear. On each line $P_iP_j$, where $i\ne j$, pick a point $P_{ij}$ distinct from $P_i$ and $P_j$. If the triple of points on the extensions of 
    the sides of each triangle $P_1P_2P_3$, $P_2P_3P_4$, $P_3P_4P_1$ is collinear, then the same holds for 
    $P_1P_2P_4$, i.e. $P_{12}\in P_{14}P_{24}$.
\end{example}

\begin{proof} \cite[Example~8.4]{FP22}
For collinear points $A,B,C$, 
denote by \bluenew{$[AB/AC]$} the unique $k\in\mathbb{R}$ such that \bluenew{$\overrightarrow{AB}=k\cdot\overrightarrow{AC}$}.
Applying Menelaus's theorem three times, we can write 
$$\left[\frac{P_{12}P_1}{P_{12}P_2}\right] \cdot \left[\frac{P_{23}P_2}{P_{23}P_3}\right] \cdot \left[\frac{P_{13}P_3}{P_{13}P_1}\right] = 1,$$
$$\left[\frac{P_{13}P_1}{P_{13}P_3}\right] \cdot \left[\frac{P_{34}P_3}{P_{34}P_4}\right] \cdot \left[\frac{P_{14}P_4}{P_{14}P_1}\right] = 1,$$
$$\left[\frac{P_{23}P_3}{P_{23}P_2}\right] \cdot \left[\frac{P_{24}P_2}{P_{24}P_4}\right] \cdot \left[\frac{P_{34}P_4}{P_{34}P_3}\right] = 1.$$
Multiplying the three equalities, we get 
$$\left[\frac{P_{12}P_1}{P_{12}P_2}\right] \cdot \left[\frac{P_{24}P_2}{P_{24}P_4}\right] \cdot \left[\frac{P_{14}P_4}{P_{14}P_1}\right] = 1,$$
which again by Menelaus's theorem means that the points $P_{12}, P_{14}, P_{24}$ are collinear.
\end{proof}

Note now that this proof can be conveniently visualized with the help of a tetrahedron on the right of Figure~\ref{fig-desargues}. Specifically, let us put a $+$ next to a \bluenew{segment} if the corresponding length appears in the numerator of one of the original three equalities, $-$ if in the denominator. Then the \bluenew{segments} that get both a $+$ and a $-$ cancel out, and we are left with the desired equality associated with face $P_1P_2P_4$. This suggests that one may be able to obtain other theorems by considering other triangulations of surfaces. This is a variation of the main idea of \cite{Richter-Gebert-06, FP22}.



\begin{figure}[htbp]
    \centering
    \includegraphics[width=0.85\textwidth]{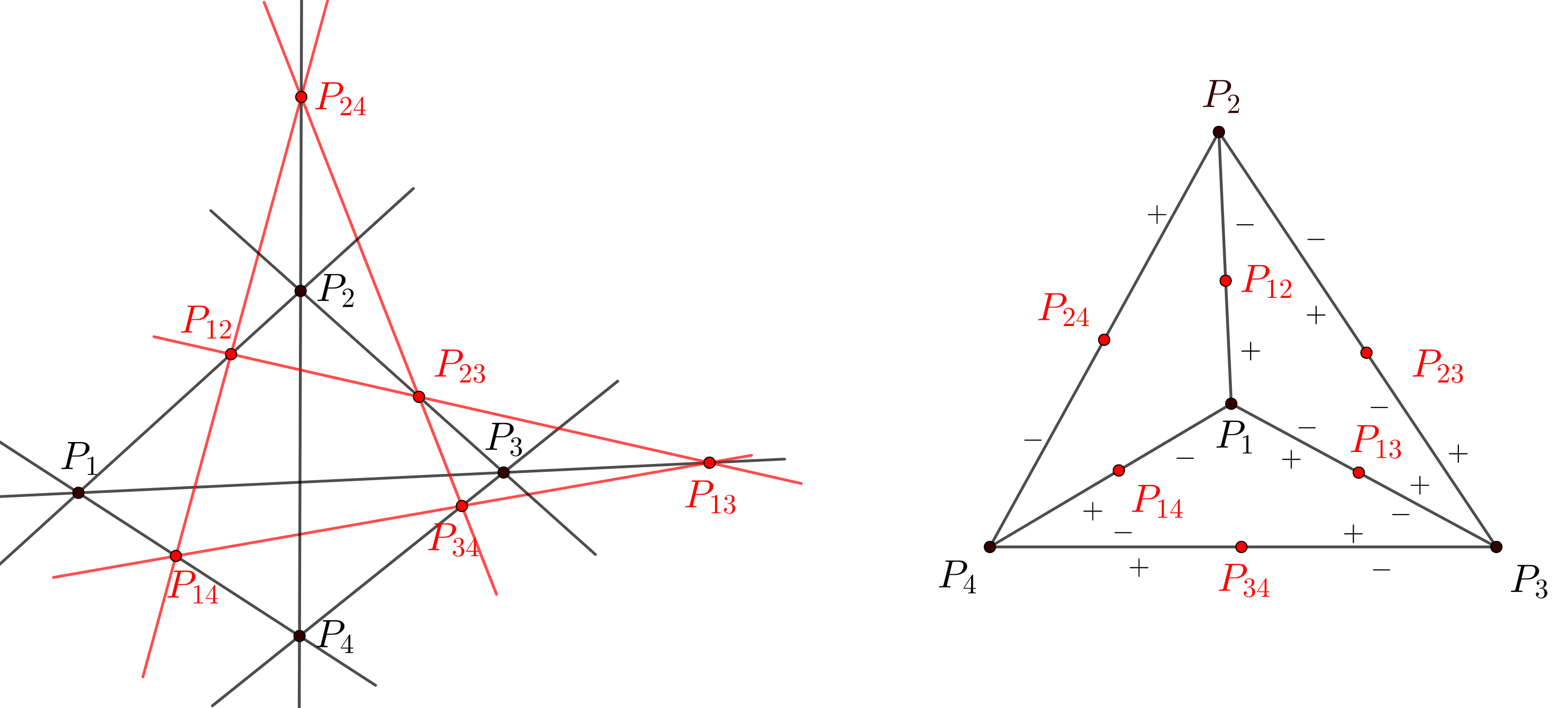}
    \caption{Desargues' configuration (left) and a tiling of a sphere (right). See Example~\ref{ex-desargues}.}
    \label{fig-desargues}
\end{figure}


\begin{example}[Pappus' theorem] \label{ex-pappus} (See Figure~\ref{fig-pappus-intro}.)
    Let two lines $L_2$ and $L_3$ contain distinct points $P_2,P_3,P_4$ and $P_5,P_6,P_7$ respectively, not contained in $L_2\cap L_3$. Let \bluenew{the} line $L_4$ pass through $P_8= P_2P_6\cap P_3P_5$ and $P_9= P_3P_7\cap P_4P_6$. Then the line $L_1=P_2P_7$ passes through $P_1=L_4\cap P_4P_5$. 
\end{example}

\begin{proof}[Proof in the case when $L_2,L_3,L_4$ are non-concurrent]
(Coxeter--Greitzer; cf.~\cite[Example~8.5]{FP22}.)
Denote $P_{10}:= L_2\cap L_3$, $P_{11}:= L_3\cap L_4$, $P_{12}:= L_4\cap L_2$; see Figure~\ref{fig-pappus-intro} to the left.

Applying Menelaus's theorem five times we can write 
$$\left[\frac{P_{1}P_{12}}{P_{1}P_{11}}\right] \cdot \left[\frac{P_{5}P_{11}}{P_{5}P_{10}}\right] \cdot \left[\frac{P_{4}P_{10}}{P_{4}P_{12}}\right] = 1,$$
$$\left[\frac{P_{8}P_{12}}{P_{8}P_{11}}\right] \cdot \left[\frac{P_{6}P_{11}}{P_{6}P_{10}}\right] \cdot \left[\frac{P_{2}P_{10}}{P_{2}P_{12}}\right] = 1,$$
$$\left[\frac{P_{9}P_{12}}{P_{9}P_{11}}\right] \cdot \left[\frac{P_{7}P_{11}}{P_{7}P_{10}}\right] \cdot \left[\frac{P_{3}P_{10}}{P_{3}P_{12}}\right] = 1,$$
$$\left[\frac{P_{9}P_{11}}{P_{9}P_{12}}\right] \cdot \left[\frac{P_{6}P_{10}}{P_{6}P_{11}}\right] \cdot \left[\frac{P_{4}P_{12}}{P_{4}P_{10}}\right] = 1,$$
$$\left[\frac{P_{8}P_{11}}{P_{8}P_{12}}\right] \cdot \left[\frac{P_{5}P_{10}}{P_{5}P_{11}}\right] \cdot \left[\frac{P_{3}P_{12}}{P_{3}P_{10}}\right] = 1.$$

Multiplying the five equalities, we get 
$$\left[\frac{P_{1}P_{12}}{P_{1}P_{11}}\right] \cdot \left[\frac{P_{7}P_{11}}{P_{7}P_{10}}\right] \cdot \left[\frac{P_{2}P_{10}}{P_{2}P_{12}}\right] = 1,$$
which again by Menelaus's theorem means that the points $P_{1}, P_{2}, P_{7}$ are collinear.
\end{proof}

We consider the case when $L_2,L_3,L_4$ are concurrent and complete the proof in Section~\ref{ssec:examples} \newnew{(this is the first tiling proof of Pappus' theorem in full generality).}


\begin{figure}[htb]
    \centering
\includegraphics[width=0.6\textwidth]{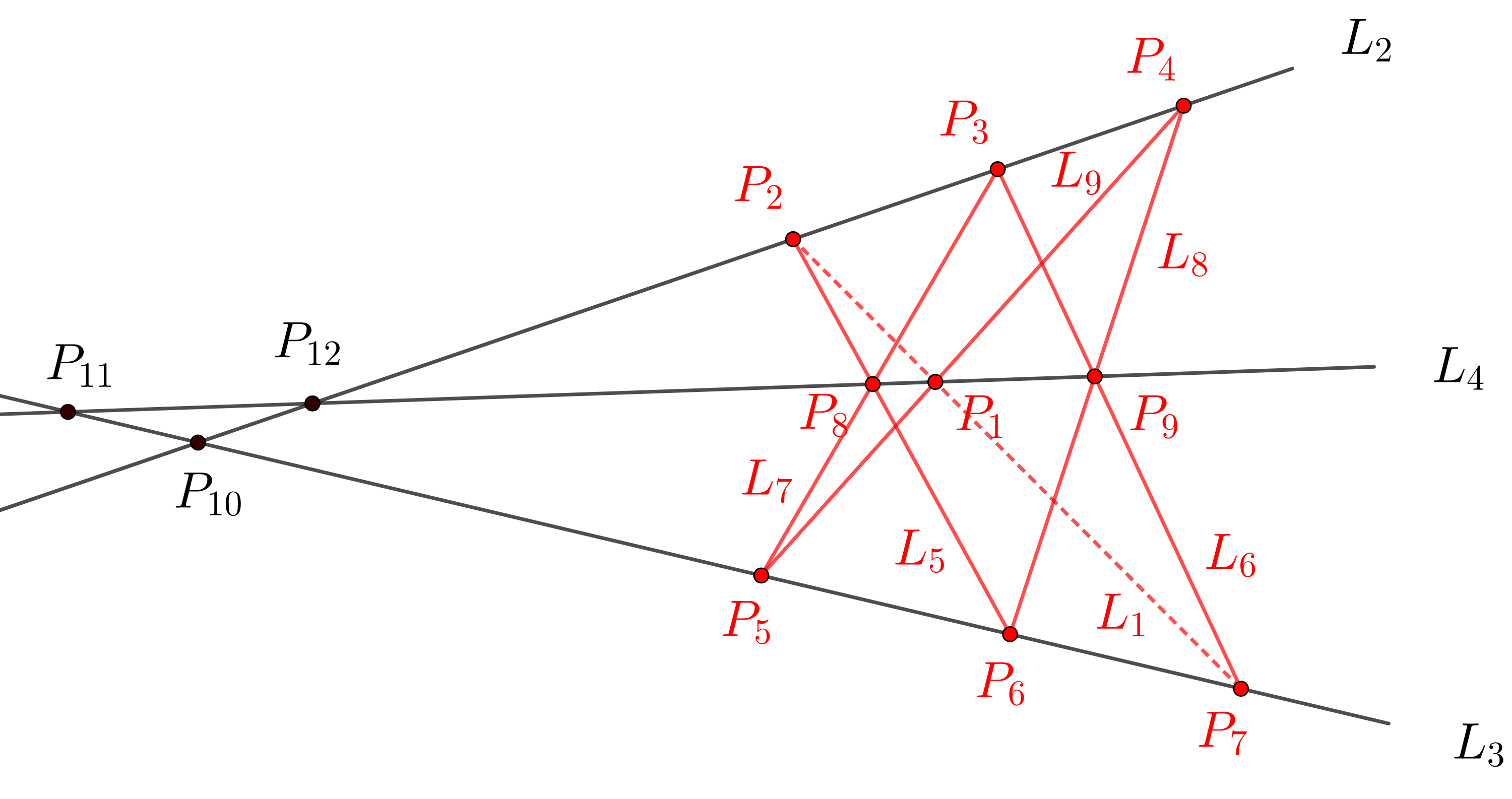}
\includegraphics[width=0.35\textwidth]{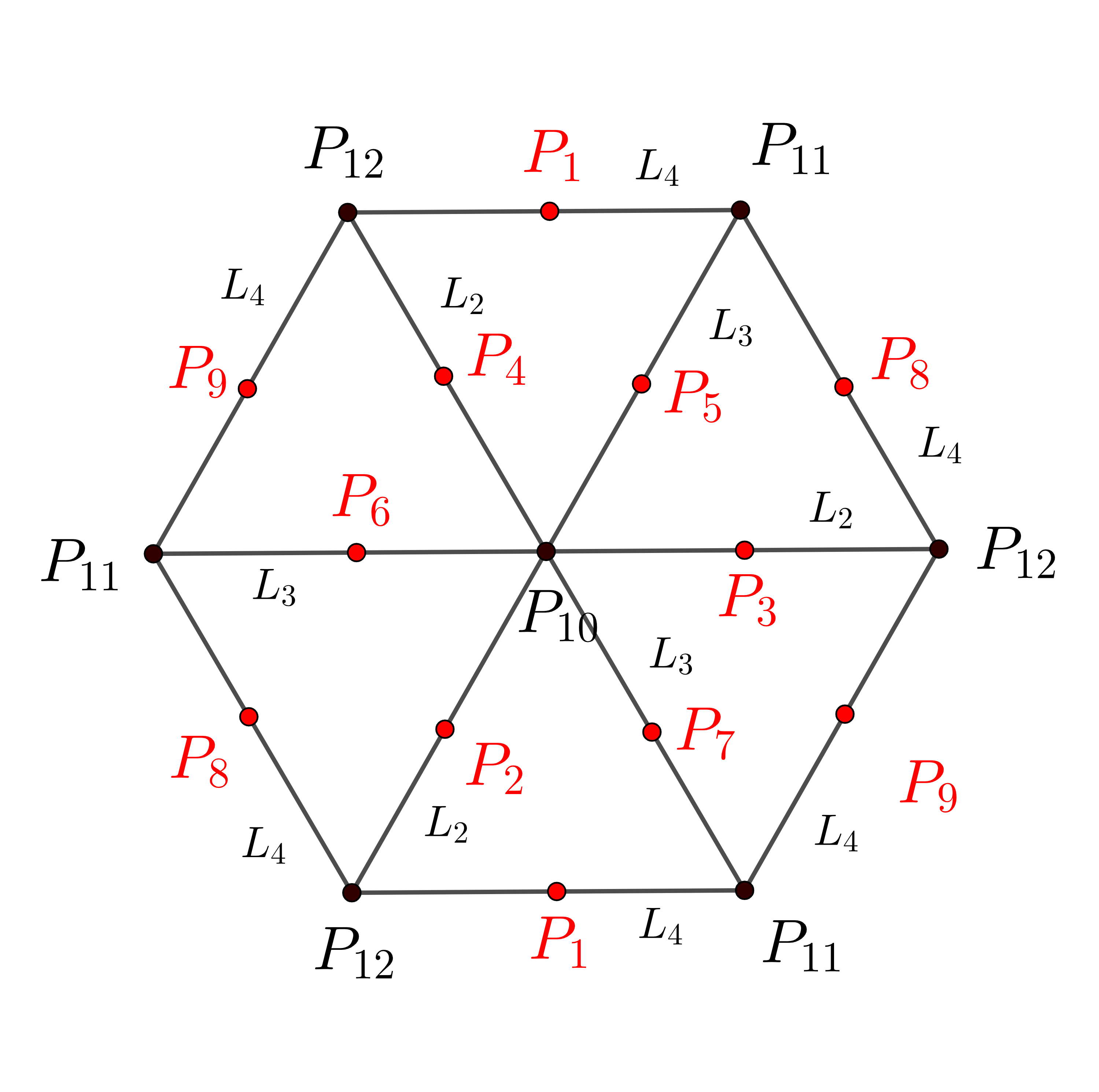}
    \caption{Pappus' configuration (left) and a tiling of a torus (right). 
    The opposite sides of the hexagon to the right are identified.
    See Example~\ref{ex-pappus}.
    }
    \label{fig-pappus-intro}
\end{figure}

Again, this proof can be conveniently visualized with the help of a torus glued out of six triangles on the right of Figure~\ref{fig-pappus-intro}. Each instance of the Menelaus theorem corresponds to one of the six triangles, and the fact that the cancellation works out the way it does is seen from each edge of this tiling occurring in exactly two triangles. For example, the Menelaus theorem for the top triangle has a term $\left[{P_{5}P_{11}}/{P_{5}P_{10}}\right]$ that corresponds to one of the three sides of this triangle. This term cancels out with the term $\left[{P_{5}P_{10}}/{P_{5}P_{11}}\right]$ associated with the same side of the neighboring triangle.


\subsection{Contributions}
\label{ssec:contributions}

We introduce a 
Master theorem  
(Theorem~\ref{th-master-theorem-general}), generating more incidence theorems 
over a given field, such as ${\mathbb{R}}$, using simplicial complexes rather than surfaces. Depending on the shape of the simplicial complex, 
we introduce the following classes of incidence theorems over~${\mathbb{R}}$:
$$
\text{
\begin{tabular}{|c|}
\hline
     sphere-tiling   \\
     provable \\
\hline     
\end{tabular}
$\subsetneqq$
\begin{tabular}{|c|}
\hline
     surface-tiling   \\
     provable \\
\hline     
\end{tabular}
$\subsetneqq$
\begin{tabular}{|c|}
\hline
     grope-tiling   \\
     provable \\
\hline     
\end{tabular}
$\subsetneqq$
\begin{tabular}{|c|}
\hline
     simplicial-complex    \\
     provable \\
\hline     
\end{tabular}
$\subsetneqq$
\begin{tabular}{|c|}
\hline
     all true    \\
     theorems \\
\hline     
\end{tabular}.
}
$$
Over $\mathbb{C}$,  
only the first and last inclusions are proper, and 
the rest are conjectured to be equalities.
Here, the (generalized) gropes are defined in Section~\ref{ssec:gropes}; see \cite{Teichner} for a concise introduction.

In particular, we 
give examples of the following incidence theorems (in parentheses, the key idea of the proof is presented):
\begin{description}
\item[Example~\ref{ex-pappus}:]
a theorem over $\mathbb{C}$ provable by a tiled torus but not a 
sphere (because it does not hold over a skew field).
\item[Example~\ref{ex-Fano}:] a theorem over $\mathbb{C}$ unprovable by a tiled 
surface nor by a simplicial complex
(because it does not hold over the field with $2$ elements);
\item[Example~\ref{ex-9-gon}:] a theorem over $\mathbb{R}$ that is provable by a simplicial complex (actually, a generalized grope) but not a tiled 
surface 
(because it does not hold over $\mathbb{C}$);
\item[Example~\ref{ex-Hesse}:] a theorem true over $\mathbb{R}$, but not $\mathbb{C}$, that is unprovable by any simplicial complex 
(because it does not hold over the field with $3$ elements);
\item[Example~\ref{ex-F4}:]
a theorem true over both $\mathbb{R}$ and $\mathbb{C}$, provable by a simplicial complex over $\mathbb{R}$ 
but not $\mathbb{C}$ (because it does not hold over the field with $4$ elements);
\item[Example~\ref{ex-non-grope}:] a theorem over $\mathbb{R}$ that is provable by a simplicial complex but not a grope (because it does not hold over the field with $5$ elements);
\end{description}



\subsection{Organization of the paper}

In Section~\ref{sec:preliminaries}, we define incidence theorems and tiling proofs. Surprisingly, we did not find this definition in the literature.
Although a few examples 
are enough to provide insight into what a tiling proof is, a precise definition 
is vital to show that some theorem has \emph{no} tiling proof. For this purpose, we need a mathematical-logic level of rigor  throughout 
(while keeping 
geometric language). 
However, 
our 
results do not rely on a particular definition of tiling proofs and hold for any definition such that the Master Theorem is true (see Theorems~\ref{th-master-theorem}, \ref{th-master-theorem-general}, \ref{th-non-commutative-master-theorem}). 
A reader 
ready to accept the truth of the Master Theorem(s) 
can skip Section~\ref{sec:preliminaries} entirely. 

In Section~\ref{sec:complex}, we present a few warm-up results on complex geometry.

In Section~\ref{sec:real}, we present our main results on geometry over real numbers and general fields. In particular, we introduce a new notion of simplicial-complex proofs 
(see Definition~\ref{def-simplicial-complex-proof}). 
A reader 
ready to accept the truth of the Master Theorem~\ref{th-master-theorem-general} 
can skip this technical definition.

In Section~\ref{sec:skew}, we present a few variations concerning skew fields.

Although our results have connections to 
topology and 
algebra, we do not assume much knowledge of those subjects. We are going to use only the following basic facts about fields. The residues modulo 
a prime number $p$ form the field $\mathbb{F}_p=\mathbb{Z}/p\mathbb{Z}$ with $p$ elements.
There exists also a field $\mathbb{F}_4$ with four elements.
The nonzero elements of any field $\mathbb{F}$ form a group $\mathbb{F}^*$ with respect to multiplication. For any field $\mathbb{F}$, there is a field of rational functions $\mathbb{F}(X)$ and the ring of polynomials $\mathbb{F}[X]$ with coefficients in $\mathbb{F}$. The latter ring is a unique factorization domain. 
All necessary results about skew fields are recalled in Appendix~\ref{sec:auxiliary}.

\section{Foundations}
\label{sec:preliminaries}



\subsection{Incidence theorems}

An incidence theorem asserts that a collection of incidences and non-incidences between several lines and points in the plane implies another incidence. Let us make this notion 
precise. 

Throughout, 
we consider the projective plane $P^2$ over a field $\mathbb{F}$ (usually $\mathbb{R}$ or $\mathbb{C}$). Denote by $P^{2*}$ the set of 
\bluenew{lines} on 
$P^2$. 
A point $P\in P^2$ is \emph{incident} to a line $L\in P^{2*}$ if $P\in L$.

\begin{definition}[Incidence Matrix] \label{def-incidence-matrix}
    Let $M$ be an $m\times n$ matrix with the entries $\pm 1,0$. 
    A finite sequence of points $P_1,\dots,P_m\in P^2$ and lines $L_1,\dots,L_n\in P^{2*}$ has \emph{incidence matrix}~$M$, if $M_{ij}=1$ implies $P_i\in L_j$ and $M_{ij}=-1$ implies $P_i\notin L_j$ for each 
    $1\le i\le m$ and $1\le j\le n$. 
\end{definition}

Notice that there is no condition on the incidence between $P_i$ and $L_j$ if $M_{ij}=0$. (This is somewhat similar to three-valued logic.) In particular, the zero matrix is always an incidence matrix, and the incidence matrix is not uniquely determined by the sequences $P_1,\dots,P_m$ and $L_1,\dots,L_n$. Also, we allow repeating points or lines in the sequences. 

We view the incidences and non-incidences between all pairs $P_i$ and $L_j$ such that $M_{ij}\ne 0$ as given and ask if they imply the 
incidence $P_1\in L_1$; cf.~\cite[\S3.1.1]{Richter-Gebert-Li-19}.

\begin{definition}[Incidence Theorem] \label{def-incidence-theorem}
The \emph{incidence theorem with the matrix $M$}, \bluenew{or the \emph{incidence predicate with the matrix $M$}}, is the predicate 
\begin{multline*}
\hspace{-0.7cm}\forall P_1,\dots,P_m\in P^2 \quad \forall L_1,\dots,L_n\in P^{2*}: P_1,\dots,P_m, L_1,\dots,L_n\text{ has incidence matrix $M$} 
\implies P_1\in L_1
\end{multline*}
\end{definition}

\emph{No} other variables (besides $P_1,\dots,P_m,L_1,\dots,L_n$), relations (besides `$\in$' and `has incidence matrix'), logical operators (besides `$\Rightarrow$') and quantifiers (besides `$\forall$') are allowed in our definition. 
\bluenew{Predicates of this form are known as \emph{quasi-identities}. 
Remarkably, quasi-identities also appear in the tiling approach to geometric group theory, e.g., \emph{Lambek polyhedral conditions} \cite{Bush-18}.}

\bluenew{Beware that} an incidence theorem is not necessarily a true predicate and its truth may depend on the ground field $\mathbb{F}$. For instance, consider two incidence theorems with the matrices
\begin{equation}\label{eq-M_1}
M=\left(\begin{matrix}
    0 & 1  \\
    1 & 1  \\
    1 & 1
\end{matrix}\right)
\qquad\text{and}\qquad
M'=\left(\begin{matrix}
    0 &  1 & 0  \\
    1 &  1 & 1  \\
    1 &  1 & -1
\end{matrix}\right).
\end{equation}
Geometrically, the former means ``There is a unique line through two given points'' and the latter means ``There is a unique line through two given \emph{distinct} points'' (because adding the column $(0,1,-1)^{\mathrm{T}}$ is equivalent to the requirement $P_2\ne P_3$). The former incidence theorem is false, and the latter is true (over any field). 
The latter is called 
the \emph{incidence axiom}. This is a slight abuse of terminology because we work with a projective plane over a field, not an axiomatically defined projective plane; hence, this assertion 
is a theorem rather than an axiom. 

More generally, any incidence theorem with a matrix having no $-1$ entries (and $M_{11}=0$) 
is false: a counterexample is a collection $P_1\ne P_2=\dots =P_{m}, L_1\ne L_2=\dots=L_{n}$ such that $P_1,P_2\in L_2$, $P_2\in L_1$, $P_1\notin L_1$. So, one needs at least one non-incidence 
for a new incidence.

An incidence theorem is tautologically true if $M_{11}=1$ but need not be false
if $M_{11}=-1$.

An incidence theorem can be \emph{vacuous} in the sense that no sequence of points and lines has incidence matrix $M$. A vacuous theorem is tautologically true. For instance, replacing the entry $M’_{11}=0$ with $M’_{11}=-1$ in \eqref{eq-M_1} leads to a vacuous theorem, which is true despite $M’_{11}=-1$.

In what follows, we state theorems in a human-readable form, without introducing the matrix $M$ explicitly. The understood matrix $M$ can always be easily reconstructed, up to slight ambiguity. For instance, the assumption that the line $L_k$ \emph{passes through} the points $P_i$ and $P_j$ just means that the $k$-th column of $M$ has ones at positions $i$ and $j$ and zeroes at all the other positions. Analogously, one encodes 
that two lines $L_i$ and $L_j$ \emph{intersect} at a point $P_k$. The assumption that the points $P_i$ and $P_j$ are \emph{distinct} is encoded by appending a column with $1$ at position $i$, $-1$ at position $j$, and zeroes at the other positions (that is, adding an auxiliary line incident with $P_i$ and non-incident with $P_j$). Analogously, one encodes \emph{distinct lines}, \emph{collinear points}, \emph{triangles}, etc. 
We often apply trivial logical implications, such as using the incidence axiom to make the matrix $M$ more compact. As a rule, we use notation different from $P_i$ and $L_j$ for the points and lines.
With these conventions, Desargues' and Pappus' theorems in Examples~\ref{ex-desargues} and~\ref{ex-pappus} are examples of incidence theorems. Let us give a simpler example.


\begin{example}\label{ex-warmup} (See Figure \ref{fig:line})
    Let \bluenew{the} points $P_1,P_2,P_3,P_4$ lie on a line $L_2$. Let \bluenew{the} line $L_1$ pass through $P_2$ and $P_3$, and \bluenew{the} line $L_3$ pass through $P_3$ and $P_4$ but not $P_1$. If $P_2\ne P_4$ then $L_1$ passes through~$P_1$.
\end{example}

\begin{figure}[htbp]
    \centering
\includegraphics[width=0.6\textwidth]{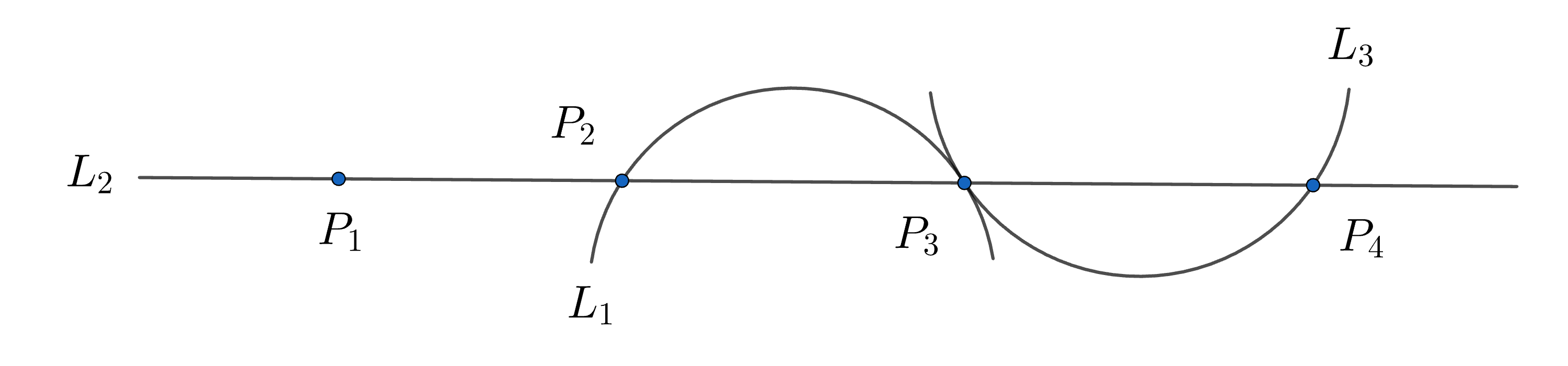}
    \caption{A simple theorem on a line illustrated by Example \ref{ex-warmup} }
    \label{fig:line}
\end{figure}

Here (up to slight ambiguity)
$$
M=
\left(
\begin{smallmatrix}
     0 & 1 & -1 &  0 \\
     1 & 1 &  0 & -1 \\
     1 & 1 &  1 &  0 \\
     0 & 1 &  1 &  1 
\end{smallmatrix}
\right).
$$
Here points correspond to rows and lines to columns. The fact that $P_2 \not = P_4$ is included via an axillary line passing through $P_4$ but not $P_2$; this line corresponds to the fourth column in~$M$. 

Example~\ref{ex-warmup} is true over any field, and we illustrate how to show this using the matrix form.
Indeed, observe that the submatrix of $M$ obtained by removing row 2 and column 4 is the same as $M'$ in~\eqref{eq-M_1} up to taking the transpose and permutation of rows. By the incidence axiom, we get $P_4\in L_1$. Then put $1$ into the entry $(4,1)$ of $M$. After that, 
removing row 3 and column 3 gives $M'$ up to permutation of rows. By the incidence axiom again, we get $P_1\in L_1$. Such arguments are easy to make automated, and we do it in 
Section~\ref{ssec:examples}.

Let us give an example of an incidence theorem true over the field with $q$ elements and false over any field with more elements:

\begin{example}\label{ex-q-points} (Line over the field with $q$ elements) Let $P_1,\dots,P_{q+1}$ be distinct points on a line~$L_2$. If a line $L_1$ does not pass through $P_2,\dots,P_{q+1}$ then it passes through $P_1$.    
\end{example}

Here $M$ is the $(q+1)\times (q+2)$ matrix (determined by the 
statement up to a slight ambiguity)
$$
M=
\left(
\begin{smallmatrix}
     0 & 1 & -1 & -1 & \hdots & -1 \\
    -1 & 1 & 1  & -1 & \hdots & -1 \\
    -1 & 1 & -1 &  1 & \hdots & -1 \\
    \vdots & \vdots & \vdots & \vdots & \ddots & \vdots \\
    -1 & 1 & -1 & -1 & \hdots &  1  
\end{smallmatrix}
\right);
$$
we encode the condition that $P_1,\dots,P_{q+1}$ are distinct by 
auxiliary lines $L_3,\dots,L_{q+2}\ne L_2$ passing through $P_2,\dots,P_{q+1}$ respectively. 
\bluenew{From the geometric meaning, }
we see that the truth of the incidence theorem with the matrix $M$ depends on the field. 

\begin{remark}\label{rem-extension}
    If an incidence theorem 
    is true over a field 
    then it is true over any sub-field.
\end{remark}


\subsection{Tiling proofs}

%
%
%
%
%
%
%

Now we formalize a method to generate (and prove) incidence theorems, discovered in~\cite{FP22,Richter-Gebert-06}.

Recall the proof of Desargues' theorem in Section~\ref{ssec:quick}. It had the following key ingredients. The triangles to which we applied Menelaus's theorem matched to form a triangulated surface. The lines in the theorem were their sides plus one additional (red) line per triangle. The points were their vertices plus a (red) point on each side. The resulting correspondence between vertices/edges/faces and points/lines preserved incidences except for the red line corresponding to a face and the point corresponding to its vertex. We summarize this construction as follows. 

\begin{definition}[Elementary surface-tiling proof] 
    \label{def-elementary-surface-tiling-proof}
    Consider an incidence theorem with an $m\times n$ matrix $M$. 
    An \emph{elementary surface-tiling proof} of the theorem is 
    a triangulated 
    closed orientable surface 
    equipped with
    two maps
    $$
    p\colon V\sqcup E\to \{1,\dots,m\}
    \qquad\text{and}\qquad
    l\colon F\sqcup E\to \{1,\dots,n\},
    $$
    where $V$, $E$, $F$ are the sets of vertices, edges, faces respectively,
    satisfying the 
    properties:
    \begin{enumerate}
        \item[(0)] there is a unique pair $i\in E,j\in F$ such that $i\subset j$ and $p(i)=l(j)=1$;
        \item[($+1$)] for any other pair $i\!\in\! E, j\!\in\! F$ or $i\!\in\! V\sqcup E,j\!\in\! E$
        such that $i\subset j$ we have $M_{p(i)l(j)}=1$;
        \item[($-1$)] for any $i\!\in\! V\sqcup E,j\!\in\! E$  contained in one face and such that $i\not\subset j$, we have $M_{p(i)l(j)}=-1$.
    \end{enumerate}
    
    The face $j$ from property~(0) is called the \emph{marked face}.
\end{definition}

Notice that there are no restrictions on $M_{p(i)l(j)}$ for $i$ and $j$ not contained in one face, and no restrictions on $M_{11}$ (although the most interesting case is $M_{11}=0$).  

The maps $p$ and $l$ assign (the indices of) \emph{p}oints and \emph{l}ines, respectively, to the vertices/ edges/faces of the triangulation. 
In the figures, we depict them 
by labeling vertices/edges/faces. A label $P_k$ at a vertex or edge $i$ means that $p(i)=k$, and a label $L_k$ at a face or edge $j$ means that $l(j)=k$. See Figure~\ref{fig:delta-triangulation} to the left. \bluenew{Beware that the red points depict edge midpoints; they are not vertices of the triangulation.} Further, the labels~$L_k$ are usually omitted when the map~$l$ is reconstructed from property~($+1$), and the points are usually denoted by other letters. 
An elementary surface-tiling proof of Desargues' theorem 
is shown in Figure~\ref{fig-desargues} to the right. 

The maps $p$ and $l$ need not be injective or surjective. However, in an elementary surface-tiling proof of an incidence theorem, one can always replace them with bijections. This leads to an elementary surface-tiling proof of a more general theorem: The former theorem is a particular case of the latter, where certain points coincide. 


\begin{lemma}[Elementary Lemma; see {\cite[Corollary~8.3]{FP22} and~\cite[p.~9]{Richter-Gebert-06}}] 
\label{l-elementary-lemma}
If an incidence theorem with some matrix 
has an elementary surface-tiling proof, then it is true (over any field). 
\end{lemma}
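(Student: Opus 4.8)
The plan is to reduce the statement to the classical Menelaus theorem over an arbitrary field, applied simultaneously to every face of the triangulated surface, and then to exploit the combinatorial structure of the surface to cancel all contributions from interior edges. First I would set up notation: given an incidence theorem with matrix $M$ and an elementary surface-tiling proof $(p,l)$, suppose we are given points $P_1,\dots,P_m$ and lines $L_1,\dots,L_n$ with incidence matrix $M$. For each face $j\in F$ with vertices/edges $i$, properties $(+1)$ and $(-1)$ say that the point $P_{p(i)}$ assigned to a vertex of $j$ lies on the line $L_{l(j)}$ assigned to $j$, whereas the point assigned to the midpoint of an edge $i'\subset j$ does \emph{not} lie on $L_{l(j)}$ unless $l(i')=l(j)$; moreover property $(+1)$ also forces each such $P_{p(i')}$ to lie on the line $L_{l(i')}$ carried by the edge $i'$. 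So to each face we may associate an instance of Menelaus: a triangle $P_{p(a)}P_{p(b)}P_{p(c)}$ whose extended sides meet the three cevian-type lines carried by the edges at the points carried by the edge midpoints.

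The key step is to define, for each oriented edge $i\subset j$ of the triangulation, a ratio in $\mathbb{F}^*$ of the form $[P_a P_b / P_a P_c]$ (in the notation of Example~\ref{ex-desargues}, suitably generalized to an arbitrary field via cross-ratios or affine coordinates on a line), and to observe that the Menelaus relation for face $j$ states that the product of the three ratios around $j$ equals $1$ (the sign conventions from properties $(+1)/(-1)$ telling us which factor goes in the numerator and which in the denominator). Then I would multiply these relations over all faces $j\in F$. Because the surface is closed and orientable, each interior edge $i\in E$ is shared by exactly two faces, inducing opposite orientations, so the two corresponding factors are reciprocal and cancel. After cancellation only the single term coming from the edge $i$ and face $j$ of property~(0) survives, and its being equal to $1$ is, again by Menelaus (the ``if'' direction), exactly the collinearity statement encoding $P_1\in L_1$.

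The main obstacle — and the place the argument needs care — is making Menelaus's theorem and the ratio $[AB/AC]$ work over an \emph{arbitrary} field, including degenerate configurations: we must ensure the relevant points are distinct and the lines genuinely meet, so that the ratios are well-defined nonzero field elements, and we must handle points at infinity in $P^2$. The cleanest route is to phrase everything projectively: replace $[AB/AC]$ by a cross-ratio of four collinear points (the three configuration points together with the intersection of their line with a fixed auxiliary line, or with the line carried by the ambient structure), so that the multiplicativity underlying the cancellation becomes the cocycle identity for cross-ratios; Menelaus then becomes the statement that a product of cross-ratios around a triangle is $1$. One must check that properties $(+1)$ and $(-1)$ supply precisely the incidences and non-incidences needed for every such cross-ratio to be defined and nonzero — this is where the non-incidence entries $M_{p(i)l(j)}=-1$ are essential — and that the orientability hypothesis is exactly what guarantees the global sign/exponent bookkeeping closes up. I would also remark that the argument is insensitive to commutativity only up to this point; the failure over skew fields (mentioned in the introduction) is precisely that cross-ratios no longer multiply as needed, which is why the Elementary Lemma is stated over fields. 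Finally, since $(p,l)$ need not be injective, the conclusion is literally about the possibly-degenerate labelled configuration, but that is all we need: the incidence theorem with matrix $M$ quantifies over all such configurations.
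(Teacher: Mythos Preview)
Your overall strategy --- apply Menelaus to each face, multiply, and use orientability to cancel --- is exactly the paper's. But you have misread the combinatorics of Definition~\ref{def-elementary-surface-tiling-proof}, and this garbles the setup. Property~($+1$) concerns only pairs $(i,j)$ with $i\in E,\ j\in F$ or with $i\in V\sqcup E,\ j\in E$; there is no vertex--face condition at all. In particular, the point assigned to a vertex does \emph{not} lie on the face-line, while the point assigned to an edge \emph{does} lie on the face-line (except for the marked pair); you have both of these reversed. The correct picture for a face $abc$ is: the vertex-points $P_{p(a)},P_{p(b)},P_{p(c)}$ form a triangle; the edge-lines $L_{l(ab)},L_{l(bc)},L_{l(ca)}$ are its sides (property~($+1$) for $i\in V,\ j\in E$); the edge-points $P_{p(ab)},P_{p(bc)},P_{p(ca)}$ lie on those sides and are distinct from the vertices (properties~($+1$) and~($-1$) for $j\in E$); and the face-line $L_{l(abc)}$ is the Menelaus transversal through the three edge-points (property~($+1$) for $i\in E,\ j\in F$). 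With this reading, the paper checks carefully that the triangle is nondegenerate and that the face-line avoids its vertices, so Menelaus applies.

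Two smaller points. First, the cancellation: one multiplies the Menelaus relations over all faces \emph{except} the marked face $j_0$; what survives is the full Menelaus product of three ratios around $j_0$, not ``a single term'', and its equalling $1$ gives collinearity of the three edge-points of $j_0$ (by the converse of Menelaus), whence $P_1\in L_1$. Second, working projectively via cross-ratios is unnecessarily delicate (which fourth point, and why is it always distinct from the others?). The paper's route is cleaner: pass to an infinite extension of $\mathbb{F}$ if necessary (using Remark~\ref{rem-extension}), send a line avoiding all the $P_i$ to infinity, and use the affine ratio $[AB/CB]\in\mathbb{F}^*$ directly.
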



The lemma holds even for $M_{11}=-1$, when it means that the incidence theorem is vacuous.

The point of Lemma~\ref{l-elementary-lemma} is a systematic \emph{generation} of incidence theorems rather than their effective proof. Given an arbitrary surface tiling and \emph{bijections} $p$ and $l$ satisfying property~(0), one generates a true incidence theorem with the matrix $M$ determined by properties~($+1$) and~($-1$), and zeroes at all the other entries. 

The lemma follows from \cite[Corollary~8.3]{FP22} but we present a direct elementary proof.

 \begin{figure}[htbp]
    \centering
 \includegraphics[width=0.8\textwidth]{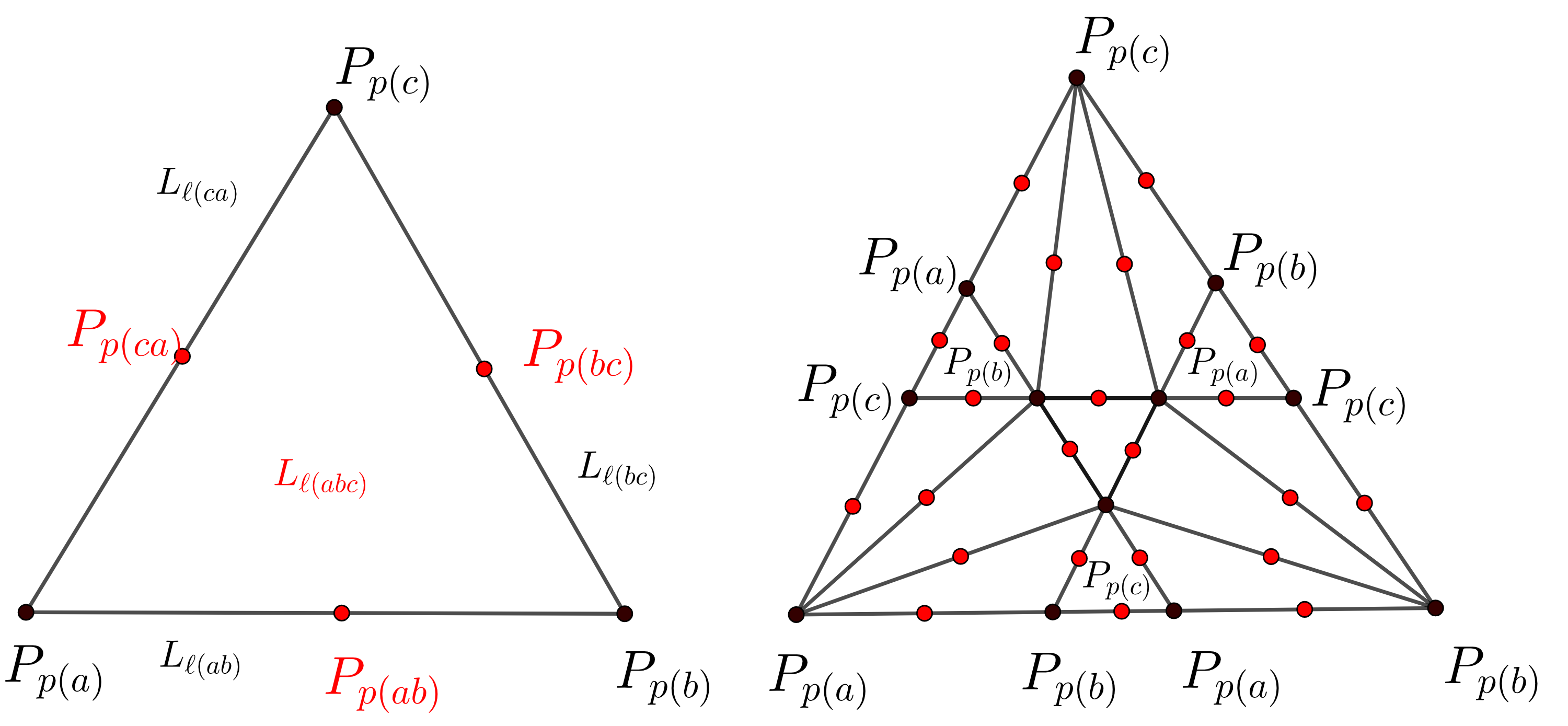}
    \caption{A face labeled by points and lines (left) and its octahedral subdivision (right). The latter has the combinatorics of the seven faces of an octahedron, three of which are subdivided further unless they have a common edge with the marked face.  \bluevar{Beware that the red points depict edge midpoints; they are not vertices of the triangulation.}
        See Definition~\ref{def-elementary-surface-tiling-proof} and  Remark~\ref{rem-triangulation}.
    }
    \label{fig:delta-triangulation}
\end{figure}

\begin{proof} (Cf.~\cite[Section~2.2]{Richter-Gebert-06}.)
    Let an incidence theorem with an $m\times n$ matrix $M$ have an elementary surface-tiling proof. Take an arbitrary sequence of points $P_1,\dots,P_m\in P^2$ and lines $L_1,\dots,L_n\in P^{2*}$ having incidence matrix $M$. Let us prove that $P_1\in L_1$.

    Assume without loss of generality that the ground field $\mathbb{F}$ is infinite. Otherwise, take an infinite extension of $\mathbb{F}$ and apply Remark~\ref{rem-extension}.
    %
    Over an infinite field, there always exists a line not passing through the points $P_1,\dots,P_m$. Taking the line to infinity by 
    a projective transformation, we may assume that $P_1,\dots,P_m$ lie in the affine plane $\mathbb{F}^2$.

    Take the triangulation of the closed orientable surface from Definition~\ref{def-elementary-surface-tiling-proof}. Fix an orientation of the surface; this specifies a counterclockwise cyclic ordering of the vertices of each face.

    Take an arbitrary face $abc\in F$ with the vertices listed counterclockwise; see Figure~\ref{fig:delta-triangulation} to the left. Let us show that $P_{p(a)},P_{p(b)},P_{p(c)}$ are vertices of a triangle and the line $L_{l(abc)}$ does not pass through them. By property ($+1$), the line $L_{l(ab)}$ passes through $P_{p(a)}$ and $P_{p(b)}$ because the sequence of points and lines has incidence matrix $M$. Analogously, $P_{p(b)},P_{p(c)}\in L_{l(bc)}$ and $P_{p(c)},P_{p(a)}\in L_{l(ca)}$. By property ($-1$), the line $L_{l(ab)}$ does not pass through $P_{p(c)}$.
    Analogously, $P_{p(a)}\notin L_{l(bc)}$ and $P_{p(b)}\notin L_{l(ca)}$. Hence, $P_{p(a)},P_{p(b)},P_{p(c)}$ are distinct and form a triangle. By properties ($+1$) and ($-1$), points 
    $P_{p(ab)}$, $P_{p(bc)}$, $P_{p(ca)}$ lie on (the extensions of) the sides of the triangle and are distinct from the vertices. 
    By properties (0) and ($+1$), there is at most one pair $i\in E,j\in F$ such that $i\subset j$ and $M_{p(i)l(j)}\ne 1$. Hence the line $L_{l(abc)}$
    contains at least two of the points $P_{p(ab)}$, $P_{p(bc)}$, $P_{p(ca)}$. Hence, it does not pass through the vertices (otherwise we get two distinct lines through two distinct points).
    
    Then we can apply Menelaus's theorem and conclude that
    \begin{equation*}
        \left[\frac{P_{p(a)}P_{p(ab)}}{P_{p(b)}P_{p(ab)}}\right]\cdot
        \left[\frac{P_{p(b)}P_{p(bc)}}{P_{p(c)}P_{p(bc)}}\right]\cdot
        \left[\frac{P_{p(c)}P_{p(ca)}}{P_{p(a)}P_{p(ca)}}\right]=1
    \end{equation*}
    for all faces $abc$ except for the 
    face $j$ appearing in the unique pair $(i,j)$ in property~(0). Here for collinear $A,B,C\in\mathbb{F}^2$, we denote by $[AB/CB]$ the unique $k\in\mathbb{F}$ such that $A-B=k(C-B)$.

    Multiplying such equations over all faces but $j$, we get the same equation for the face~$j$, because we have a triangulation of a closed orientable surface. For $abc=j$, by Menelaus's theorem, the points 
    $P_{p(ab)}$, $P_{p(bc)}$, $P_{p(ca)}$ 
    are collinear. One of them is $P_1$ and the other two are distinct from $P_1$ and each other. 
    The latter two lie on the line $L_{l(abc)}=L_1$, hence $P_1$ does. 
\end{proof}

In this proof, it is crucial that the pair $(i,j)$ in property~(0) exists and is unique: otherwise multiplying the equations over all faces but $j$ would not lead to the equation for
~$j$. 
However, the face $j$ with 
$l(j)=1$ need \emph{not} be unique; only the uniqueness of the \emph{pair} $(i,j)$ is required. 

Recall that the notion of a triangulation requires 
that the endpoints of each edge are distinct and the intersection of two distinct faces is either empty, or a single vertex, or a single edge \bluenew{\cite{Hatcher}.} Clearly, the lemma and its proof remain true without these requirements.
This generalization of triangulations is called $\Delta$-complexes or $\Delta$-triangulations \cite[Section~2.1]{Hatcher}. We avoid them in our definitions just because this notion is less well-known. Define an \emph{elementary surface-$\Delta$-tiling proof} by replacing the word ``triangulated'' with ``$\Delta$-triangulated'' in Definition~\ref{def-elementary-surface-tiling-proof}. In the examples below, we usually present elementary surface-$\Delta$-tiling proofs to minimize the number of tiles. Those are easily transformed into genuine triangulations:

\begin{remark}[Octahedral subdivision] \label{rem-triangulation}
    If an incidence theorem with some matrix has an elementary surface-$\Delta$-tiling proof, then it has an elementary surface-tiling proof.
\end{remark}

\begin{proof} 
Notice that the three vertices of each face in the surface-$\Delta$-tiling proof \bluenew{must be} distinct, otherwise, we get a contradiction to property~($-1$). \bluenew{It remains to make the intersection of distinct faces either empty or a single vertex or edge.} Let $j$ be the marked face. 
Subdivide each face $k$ having no common edges with $j$ into thirteen copies of $k$ as shown in Figure~\ref{fig:delta-triangulation} 
and extend the maps $p$ and $l$ to the resulting vertices, edges, and faces in an obvious way. 
Subdivide each face $k\ne j$ having a common edge with $j$ analogously, only do not add new vertices on the common edge(s). We get a genuine triangulation, still satisfying 
properties~(0), ($+1$), ($-1$).
\end{proof}

\subsection{Master Theorem}

Now we are going to state a Master Theorem producing even more incidence theorems. 

We can achieve much more with tiling proofs than just with elementary ones. What can help are proofs by contradiction, using the incidence axiom, 
case distinctions, and auxiliary constructions. In particular, this allows us to finish the tiling proof of Example~\ref{ex-pappus}. 

Let us formalize those notions one by one. 


Proof by contradiction means proving $P_1\notin L_1\Rightarrow P_i\in L_j$ instead of $P_i\notin L_j\Rightarrow P_1\in L_1$. This is applicable when 
$M_{ij}=-1$ and realized by setting also $M_{11}:=-1$. 


\begin{definition}[Surface-tiling proof by contradiction] 
    \label{def-surface-tiling-proof-with-relabeling}
Consider an incidence theorem with an $m\times n$ matrix $M$ such that $M_{ij}=-1$ for some $i$ and $j$. 
    A \emph{surface-tiling proof by contradiction} is an 
    elementary surface-tiling proof of the incidence theorem with the matrix obtained from $M$ by setting $M_{11}:=-1$, swapping the rows $1$ and $i$, and swapping the columns $1$ and $j$.
\end{definition}

The incidence theorem with the resulting matrix is vacuous, 
which means a contradiction.
We swap those rows and columns 
because the conclusions of our incidence theorems always concern $M_{11}$ but not $M_{ij}$.

Next, some incidence theorems are too tiny for a tiling proof, like the incidence axiom with the matrix $M'$ given by~\eqref{eq-M_1}. 
We introduce the following tool to deal with them.

\begin{definition}[Proof by contradiction to the incidence axiom] 
    \label{def-proof-by-contradiction-to-the-incidence-axiom}  
    The incidence theorem with a matrix $M$ 
    \emph{contradicts the incidence axiom} if
    $M$ has a sub-matrix 
    $
\left(
\begin{smallmatrix}
    -1 & 1 & *\\
     1 & 1 & 1\\
     1 & 1 &-1 
\end{smallmatrix}
\right)
    $
    up to permutation of rows and columns, where $*$ is any element of $\{-1,0,1\}$.
\end{definition}

Our next tool is case distinction.

\begin{definition}[Surface-tiling proof with case distinction] 
    \label{def-surface-tiling-proof-with-case-distinction}
    Consider an incidence theorem with an $m\times n$ matrix $M$. 
    Let $M_1,\dots,M_{2^k}$ be all possible matrices obtained from $M$ by replacing all zero entries with $\pm 1$. The incidence theorem with a matrix $M_l$ is a \emph{tautology} if $(M_l)_{11}=1$.
    A \emph{surface-tiling proof with case distinction} is a collection of 
    surface-tiling proofs by contradiction for all incidence theorems with matrices $M_1,\dots,M_{2^k}$ that are not tautologies and do not contradict the incidence axiom.
\end{definition}    

As a dummy example, the incidence axiom has a surface-tiling proof with case distinction, because all incidence theorems with the matrices $M_1,\dots,M_{4}$ are either tautologies or contradict the incidence axiom, so that no tilings are required.

In what follows we do case distinction in a human-readable form, grouping the 
$2^k$ cases.

Our last tool is auxiliary constructions: to the sequences $P_1,\dots,P_m$ and $L_1,\dots,L_n$, one can iteratively add 
the intersection point of two lines or 
the line through two of the points. 
Another auxiliary construction is adding a point not on the given lines or a line not passing through the given points. The latter construction is possible for an infinite ground field $\mathbb{F}$. Thus, in what follows, we assume that $\mathbb{F}$ is infinite unless otherwise explicitly indicated.

\begin{definition}[Surface-tiling proof with auxiliary constructions] 
    \label{def-surface-tiling-proof-with-auxiliary-constructions}
    Let the field $\mathbb{F}$ be infinite. Consider an incidence theorem with an $m\times n$ matrix $M_0$. For $i=1,\dots,k$, an \emph{auxiliary construction} $M_i$ is a matrix obtained from $M_{i-1}$ by appending a row or a column having either
    \begin{itemize}
        \item
        two entries $1$ and all the other entries $0$; or
        \item
        all the entries $-1$.
    \end{itemize}
    A \emph{surface-tiling proof with auxiliary constructions}, or just a \emph{surface-tiling proof}, is a finite sequence of auxiliary constructions $M_1, \dots, M_k$ and a surface-tiling proof with case distinctions for the incidence theorem with the matrix~$M_k$.  
\end{definition}

Notice that appending the rows as in Definition~\ref{def-surface-tiling-proof-with-auxiliary-constructions} is the only allowed operation; one is not allowed to fill in the entries of $M$ using incidence theorems or any kind of logical implications. A tiling proof is very different from deducing incidence theorems from axioms or each other.

The structure of Definitions~\ref{def-surface-tiling-proof-with-case-distinction}--\ref{def-surface-tiling-proof-with-auxiliary-constructions} permits just one possible order of modifications of the incidence matrix: first, all auxiliary constructions, then, case distinction. In our exposition of tiling proofs, we sometimes change this order to improve readability; it is understood that all auxiliary constructions are moved to the beginning.  

In what follows, we describe auxiliary constructions in a human-readable form, without writing the sequence $M_0,\dots,M_k$ explicitly. For instance, 
\emph{drawing a line not passing through given points} means appending a column with all entries $-1$. \emph{Drawing a line through two given points} means appending a column with two entries $1$ and the other entries $0$. This is possible even when the two points coincide (recall that repeating points are allowed in the sequence). However, \emph{drawing a line passing through 
$P_1$ 
and not passing through $P_2,\dots,P_m$}
is not an allowed auxiliary construction because it becomes impossible when $P_i=P_1$ for some $i\ne 1$, which a priori can happen. Instead, one 
can do several steps: draw lines through $P_1$ and $P_i$ for all $i\ne 1$ (even if $P_i=P_1$), pick a point $P_{m+1}$ not on those lines, and draw a line through $P_1$ and $P_{m+1}$.



The following version of \cite[Corollary~8.3]{FP22} 
follows 
from the definitions and Lemma~\ref{l-elementary-lemma}.

\begin{theorem}[Master theorem; {cf.~\cite[Corollary~8.3]{FP22} and~\cite[p.~9]{Richter-Gebert-06}}] \label{th-master-theorem}
If an incidence theorem with some matrix 
has a surface-tiling proof, then it is true over any infinite field. 
\end{theorem}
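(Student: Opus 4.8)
The plan is to reduce the Master Theorem to the Elementary Lemma (Lemma~\ref{l-elementary-lemma}) by unwinding the chain of definitions (Definitions~\ref{def-surface-tiling-proof-with-relabeling}--\ref{def-surface-tiling-proof-with-auxiliary-constructions}) one layer at a time, showing at each layer that the validity of the incidence theorem is preserved. Concretely, suppose an incidence theorem with matrix $M_0$ has a surface-tiling proof over an infinite field $\mathbb{F}$, and fix a sequence of points $P_1,\dots,P_m\in P^2$ and lines $L_1,\dots,L_n\in P^{2*}$ with incidence matrix $M_0$; we must show $P_1\in L_1$.

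First I would handle the auxiliary constructions. By induction on $i$, given that the current sequence realizes $M_{i-1}$, I extend it to realize $M_i$: if $M_i$ appends a column with two entries equal to $1$ at rows $r,s$, adjoin the line through $P_r$ and $P_s$ (any line through them if they coincide); if it appends a column of all $-1$'s, use that $\mathbb{F}$ is infinite to pick a line missing all finitely many current points, and likewise for rows. This yields an extended sequence realizing $M_k$, and — crucially — the conclusion $P_1\in L_1$ for the original theorem is literally the conclusion for the $M_k$ theorem since rows/columns are only appended, never altered, so row $1$ and column $1$ are untouched. Next I address case distinction: the extended sequence realizes $M_k$, hence realizes exactly one of the refinements $M_1,\dots,M_{2^k}$ obtained by resolving every $0$ entry to the actual incidence/non-incidence of the sequence (here one must note that a realized-with-zeros matrix can always be ``completed'' to a $\pm1$ matrix that the same sequence realizes, because $M_{ij}=0$ imposes no constraint). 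If the realized $M_l$ is a tautology, then $(M_l)_{11}=1$ means $P_1\in L_1$ directly. If $M_l$ contradicts the incidence axiom, then $M_l$ (hence the sequence) contains the forbidden $3\times 3$ pattern, which forces two distinct lines through two distinct points — impossible over a field — so this case is vacuous and there is nothing to prove. Otherwise the surface-tiling proof supplies a surface-tiling proof by contradiction for $M_l$.

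Then I unwind the proof by contradiction. By Definition~\ref{def-surface-tiling-proof-with-relabeling}, we have an elementary surface-tiling proof of the incidence theorem with the matrix $M_l'$ obtained from $M_l$ by setting the $(1,1)$ entry to $-1$ and swapping rows $1\leftrightarrow i$, columns $1\leftrightarrow j$, where $(M_l)_{ij}=-1$. By Lemma~\ref{l-elementary-lemma} this theorem is true over any field (it is in fact vacuous, since its $(1,1)$ entry is $-1$ while it still asserts incidence of the corresponding point and line). Applying it to the sequence with rows $1,i$ and columns $1,j$ swapped: that permuted sequence cannot have incidence matrix $M_l'$, because $M_l'$ would be realizable only if the swapped point lies on the swapped line, yet $(M_l')_{11}=-1$ demands the opposite. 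The only way the permuted sequence fails to realize $M_l'$ is that the entry $(M_l')_{11}=-1$ is violated, i.e. the relevant point \emph{is} on the relevant line; translating back through the swaps, this says $P_i\in L_j$ in the original labeling — but that contradicts $(M_l)_{ij}=-1$, which the sequence realizes. Hence the sole surviving possibility is that the sequence never realized $M_l$ with $(M_l)_{11}=-1$; combined with the earlier case analysis (tautology or incidence-axiom cases), $P_1\in L_1$ holds in every case, completing the proof.

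The main obstacle, and the point demanding care, is the bookkeeping around ``realizing a $\pm1,0$ matrix'': one must be precise that having incidence matrix $M$ is an \emph{open} condition on each $0$ entry, so a given geometric configuration realizes many matrices simultaneously, and in particular always some fully-resolved $\pm1$ refinement; and one must track how the row/column swaps and the $(1,1):=-1$ substitution interact with the definition of ``has incidence matrix'' so that the vacuity argument genuinely delivers the target incidence rather than some unrelated one. The topological content has all been isolated in Lemma~\ref{l-elementary-lemma} (Menelaus plus a closed orientable surface), so the present argument is purely a logical reduction; the only analytic input reused is the infinitude of $\mathbb{F}$, needed exactly for the ``point/line avoiding a finite set'' auxiliary constructions.
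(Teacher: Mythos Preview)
Your overall strategy is exactly what the paper intends: the paper simply says the Master Theorem ``follows from the definitions and Lemma~\ref{l-elementary-lemma}'', and you are correctly unwinding the chain Definition~\ref{def-surface-tiling-proof-with-auxiliary-constructions} $\to$ Definition~\ref{def-surface-tiling-proof-with-case-distinction} $\to$ Definition~\ref{def-surface-tiling-proof-with-relabeling} $\to$ Lemma~\ref{l-elementary-lemma}. The treatment of auxiliary constructions, case distinction, and the incidence-axiom case is fine.

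The argument goes wrong in the final ``proof by contradiction'' step, however. You assert that the permuted sequence fails to realize $M_l'$ \emph{only} at the $(1,1)$ entry. That is not where the discrepancy lives. In Definition~\ref{def-surface-tiling-proof-with-relabeling} one \emph{first} sets $M_{11}:=-1$ and \emph{then} swaps rows $1\leftrightarrow i$ and columns $1\leftrightarrow j$; so the modified $(1,1)$ entry lands at position $(i,j)$ of $M_l'$, while $(M_l')_{11}=(M_l)_{ij}=-1$ is untouched by the setting. Consequently the permuted sequence agrees with $M_l'$ everywhere except possibly at entry $(i,j)$, not $(1,1)$, and that entry records precisely whether $P_1\in L_1$. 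In particular, in the only case that matters --- the non-tautological case $(M_l)_{11}=-1$, i.e.\ $P_1\notin L_1$ --- the permuted sequence realizes $M_l'$ \emph{exactly}, with no failing entry at all. Your derivation of $P_i\in L_j$ from a nonexistent failure at $(1,1)$ is therefore circular.

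The fix is short: assume $(M_l)_{11}=-1$. Then setting $M_{11}:=-1$ is vacuous, so $M_l'$ is literally $M_l$ with rows $1,i$ and columns $1,j$ swapped, and the permuted sequence realizes it. Lemma~\ref{l-elementary-lemma} applied to this realization gives that the first permuted point lies on the first permuted line, i.e.\ $P_i\in L_j$, contradicting $(M_l)_{ij}=-1$. Hence $(M_l)_{11}=-1$ is impossible and $P_1\in L_1$.
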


Here the field is assumed to be infinite for 
auxiliary constructions; recall that Definition~\ref{def-surface-tiling-proof-with-auxiliary-constructions} was given under such an assumption. Theorem~\ref{th-master-theorem} generates a lot of incidence theorems.

This section does not pretend to exhaust all possible types of tiling proofs one can invent. 
We aimed at the minimal definition covering 
the 
key examples in \cite{FP22}. 
The results of the next sections do not rely on the particular definition; they hold for any definition such that the Master Theorem is true. We conjecture that an analog of Church's thesis applies: \emph{any ``reasonable'' 
general definition of a tiling proof leads to the same set of incidence theorems provable by tilings} (depending only on the topological space used in the tiling proof).


\subsection{Basic examples}
\label{ssec:examples}

Let us give two examples of incidence theorems with surface-tiling proofs.

Our first example is 
Pappus' theorem (see Example~\ref{ex-pappus} and Figure~\ref{fig-pappus} to the top left). We formalize and complete the proof given in Section~\ref{ssec:quick}. \newnew{This is going to be the first tiling proof of Pappus' theorem in full generality; the arguments in \cite{Baralic-etal-20,FP22,Richter-Gebert-06} could only prove it under certain ``general position'' assumptions.}
%
%
%
%
%
Here (up to slight ambiguity)
\begin{equation}\label{eq-one-line-theorem}
M=
\left(\begin{smallmatrix}
 0 & 0 & 0 & 1 & 0 & 0 & 0 & 0 & 1 \\
 1 & 1 & -1 & 0 & 1 & -1 & 0 & 0 & 0 \\
 0 & 1 & -1 & 0 & -1 & 1 & 1 & 0 & 0 \\
 0 & 1 & -1 & 0 & -1 & -1 & 0 & 1 & 1 \\
 0 & -1 & 1 & 0 & -1 & -1 & 1 & 0 & 1 \\
 0 & -1 & 1 & 0 & 1 & -1 & 0 & 1 & 0 \\
 1 & -1 & 1 & 0 & -1 & 1 & 0 & 0 & 0 \\
 0 & 0 & 0 & 1 & 1 & 0 & 1 & 0 & 0 \\
 0 & 0 & 0 & 1 & 0 & 1 & 0 & 1 & 0 \\
\end{smallmatrix}
\right).
\end{equation}
Recall that the rows and columns of $M$ are labeled with points $P_1,\dots,P_{m}$ and lines $L_1,\dots,L_{n}$ in order, respectively, where we have introduced the lines (see Figure~\ref{fig-pappus} to the top left)
$$
L_5:=P_2P_6, \quad L_6:=P_3P_7, \quad L_7:=P_3P_5, \quad L_8:=P_4P_6, \quad L_9:=P_4P_5.
$$ 
The entries $-1$ encode the assumption that $P_2,\dots,P_7$ are distinct and not contained in $L_2\cap L_3$.

%

\begin{figure}[htb]
    \centering
\includegraphics[width=0.6\textwidth]{pappus.png}
\includegraphics[width=0.35\textwidth]{pappustile.png}
\includegraphics[width=0.6\textwidth]{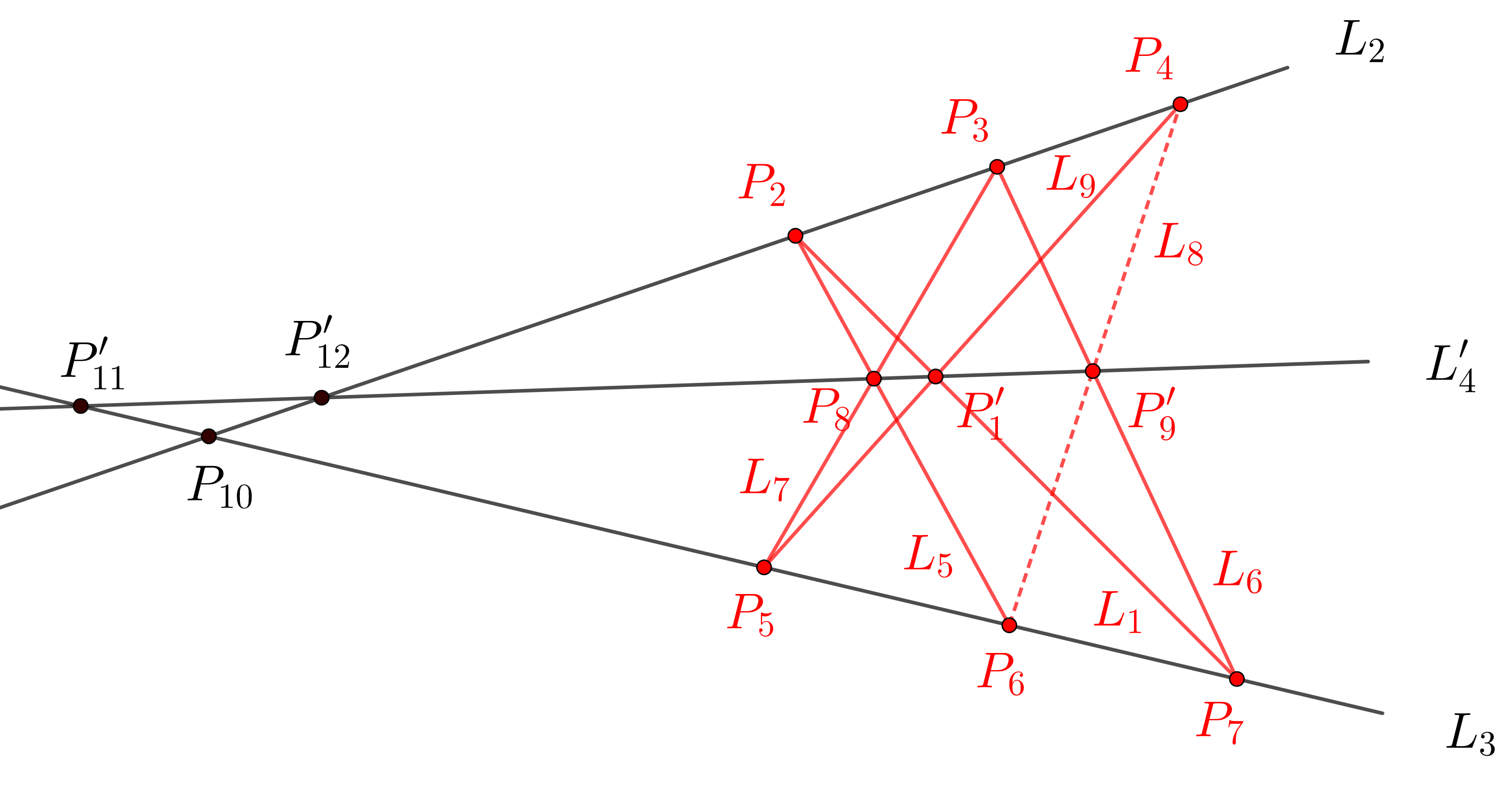}
\includegraphics[width=0.35\textwidth]{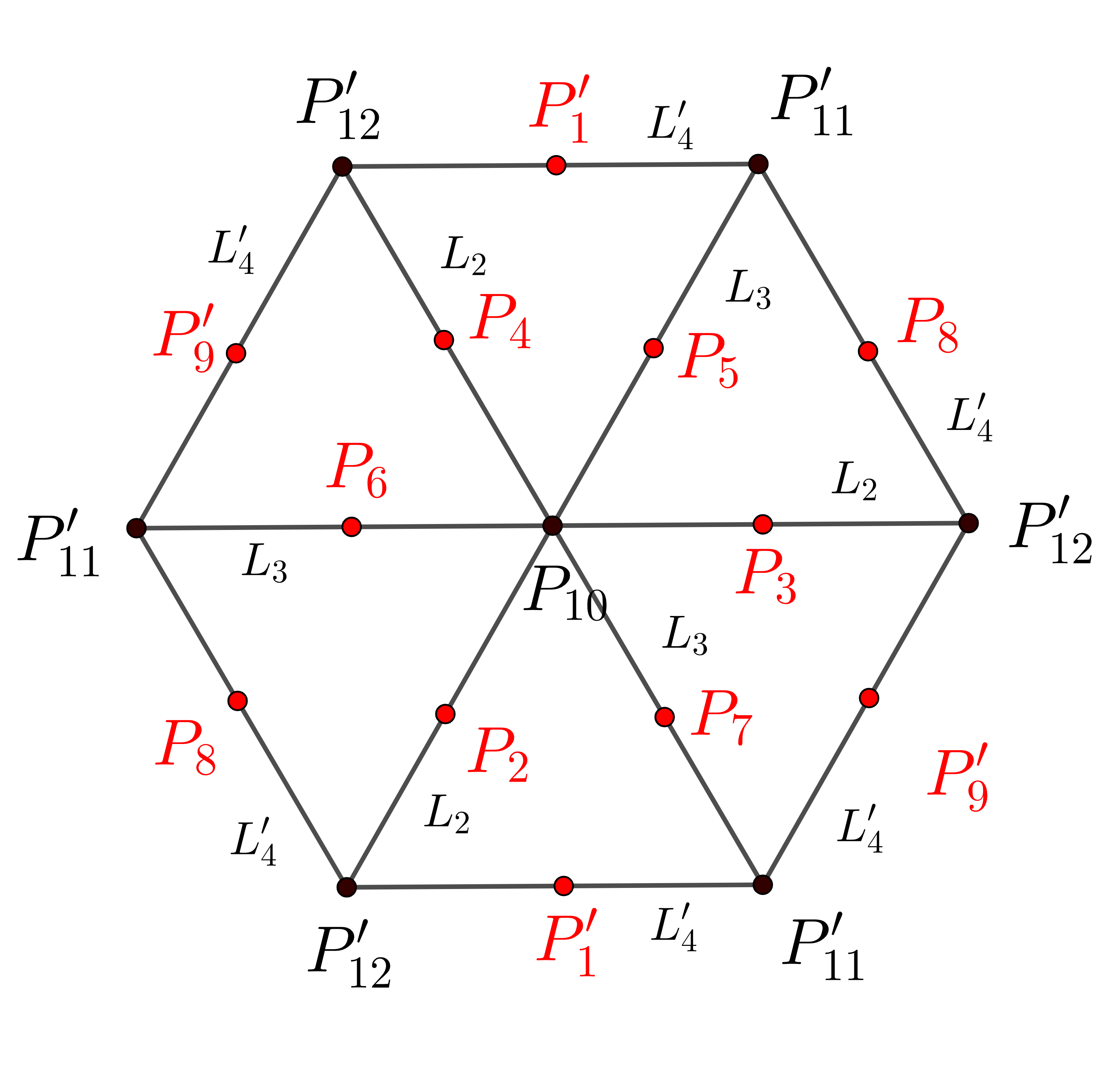}
    \caption{Pappus' configuration and auxiliary points $P_{10},P_{11},P_{12}$ (top left) and a tiling of a torus (top right) used in Case 1 of the tiling proof. Auxiliary points $P_{1}',P_9',P_{11}',P_{12}'$ and line $L_4'$ (bottom left) and a tiling of a torus (bottom right) used in Case 2 of the proof. 
    The opposite sides of each hexagon are identified.
    See the tiling proof of Example~\ref{ex-pappus}.
    }
    \label{fig-pappus}
\end{figure}

\begin{proof}[Tiling proof of Example~\ref{ex-pappus}] 
    Construct auxiliary 
    points $P_{10}:= L_2\cap L_3$, $P_{11}:= L_3\cap L_4$, $P_{12}:= L_4\cap L_2$;
    see Figure~\ref{fig-pappus} to the top left. This means adding three bottom rows to~$M$: 
    \begin{equation}\label{eq2-one-line-theorem}
\left(\begin{smallmatrix}
 0 & 0 & 0 & 1 & 0 & 0 & 0 & 0 & 1 \\
 1 & 1 & -1 & 0 & 1 & -1 & 0 & 0 & 0 \\
 0 & 1 & -1 & 0 & -1 & 1 & 1 & 0 & 0 \\
 0 & 1 & -1 & 0 & -1 & -1 & 0 & 1 & 1 \\
 0 & -1 & 1 & 0 & -1 & -1 & 1 & 0 & 1 \\
 0 & -1 & 1 & 0 & 1 & -1 & 0 & 1 & 0 \\
 1 & -1 & 1 & 0 & -1 & 1 & 0 & 0 & 0 \\
 0 & 0 & 0 & 1 & 1 & 0 & 1 & 0 & 0 \\
 0 & 0 & 0 & 1 & 0 & 1 & 0 & 1 & 0 \\
 0 & 1 & 1 & 0 & 0 & 0 & 0 & 0 & 0 \\
 0 & 0 & 1 & 1 & 0 & 0 & 0 & 0 & 0 \\
 0 & 1 & 0 & 1 & 0 & 0 & 0 & 0 & 0 \\
\end{smallmatrix}
\right).
\end{equation}
    
    Consider the following three cases:
    
    Case 1: $P_{10}\not\in L_4$. Consider the tiling shown in Figure~\ref{fig-pappus} to the top right. The maps $p$ and $l$ are depicted using the convention after Definition~\ref{def-elementary-surface-tiling-proof}. These maps satisfy properties (0), ($+1$), ($-1$) in Definition~\ref{def-elementary-surface-tiling-proof}; otherwise, we get a contradiction to the incidence axiom. 
    
    Indeed, we can identify an essentially unique way to fill the zero entries of~\eqref{eq2-one-line-theorem} with $\pm 1$ without getting a contradiction to the incidence axiom as follows. First, we put $-1$ in the intersection of row $10$ and column $4$ because $P_{10}\not\in L_4$ in Case 1. We fill in the other zero entries one by one. We try to put $1$ in the current zero entry. If this leads to a $3\times 3$ submatrix as in Definition~\ref{def-proof-by-contradiction-to-the-incidence-axiom} then we get a contradiction to the incidence axiom; hence, we put $-1$ instead. 
    Otherwise, we keep $0$ in the entry. Repeating this process (see an automated checking in \cite[Section~1]{github}, where we pass through the entries $3$ times), we bring matrix~\eqref{eq2-one-line-theorem} to the form 
    $$
    \left(\begin{smallmatrix}
 0 & -1 & -1 & 1 & -1 & -1 & -1 & -1 & 1 \\
 1 & 1 & -1 & -1 & 1 & -1 & -1 & -1 & -1 \\
 -1 & 1 & -1 & -1 & -1 & 1 & 1 & -1 & -1 \\
 -1 & 1 & -1 & -1 & -1 & -1 & -1 & 1 & 1 \\
 -1 & -1 & 1 & -1 & -1 & -1 & 1 & -1 & 1 \\
 -1 & -1 & 1 & -1 & 1 & -1 & -1 & 1 & -1 \\
 1 & -1 & 1 & -1 & -1 & 1 & -1 & -1 & -1 \\
 -1 & -1 & -1 & 1 & 1 & -1 & 1 & -1 & -1 \\
 -1 & -1 & -1 & 1 & -1 & 1 & -1 & 1 & -1 \\
 -1 & 1 & 1 & -1 & -1 & -1 & -1 & -1 & -1 \\
 -1 & -1 & 1 & 1 & -1 & -1 & -1 & -1 & -1 \\
 -1 & 1 & -1 & 1 & -1 & -1 & -1 & -1 & -1 \\
\end{smallmatrix}
\right).
$$
The resulting matrix has the properties from Definition~\ref{def-elementary-surface-tiling-proof}: 
32 ones are in the entries prescribed by property~($+1$), and the remaining entries but one are $-1$ so that property~($-1$) is automatic.

By Remark~\ref{rem-triangulation}, the incidence theorem with the resulting matrix has an elementary surface-tiling proof. This concludes the tiling proof that $P_1\in L_1$ in Case~1. (To be precise, we should also have replaced the remaining zero entry with $\pm1$, the value $+1$ leading to a tautology, and $-1$ 
to a surface-tiling proof by contradiction with $i=j=1$; see Definitions~\ref{def-surface-tiling-proof-with-case-distinction} and~\ref{def-surface-tiling-proof-with-relabeling}.)
    
    For Cases 2--3, we 
    construct auxiliary point $P_1':=P_2P_7\cap P_4P_5$, line $L_4':=P_1'P_8$, and points $P_{9}':= P_3P_7\cap L_4'$, $P_{11}':= L_3\cap L_4'$, $P_{12}':= L_4'\cap L_2$. See Figure~\ref{fig-pappus} to the bottom left. This means appending rows and columns to $M$ as follows (where $P_{13}:=P_1'$, $L_{10}:=L_4'$, $P_{14}:=P_9'$ etc.):
    \begin{equation}\label{eq4-one-line-theorem}
\left(\begin{smallmatrix}
 0 & 0 & 0 & 1 & 0 & 0 & 0 & 0 & 1 & 0 \\
 1 & 1 & -1 & 0 & 1 & -1 & 0 & 0 & 0 & 0 \\
 0 & 1 & -1 & 0 & -1 & 1 & 1 & 0 & 0 & 0 \\
 0 & 1 & -1 & 0 & -1 & -1 & 0 & 1 & 1 & 0 \\
 0 & -1 & 1 & 0 & -1 & -1 & 1 & 0 & 1 & 0 \\
 0 & -1 & 1 & 0 & 1 & -1 & 0 & 1 & 0 & 0 \\
 1 & -1 & 1 & 0 & -1 & 1 & 0 & 0 & 0 & 0 \\
 0 & 0 & 0 & 1 & 1 & 0 & 1 & 0 & 0 & 1 \\
 0 & 0 & 0 & 1 & 0 & 1 & 0 & 1 & 0 & 0 \\
 0 & 1 & 1 & 0 & 0 & 0 & 0 & 0 & 0 & 0 \\
 0 & 0 & 1 & 1 & 0 & 0 & 0 & 0 & 0 & 0 \\
 0 & 1 & 0 & 1 & 0 & 0 & 0 & 0 & 0 & 0 \\
 1 & 0 & 0 & 0 & 0 & 0 & 0 & 0 & 1 & 1 \\
 0 & 0 & 0 & 0 & 0 & 1 & 0 & 0 & 0 & 1 \\
 0 & 0 & 1 & 0 & 0 & 0 & 0 & 0 & 0 & 1 \\
 0 & 1 & 0 & 0 & 0 & 0 & 0 & 0 & 0 & 1 \\
\end{smallmatrix}
\right).
\end{equation}
(To be precise, we should have started with~\eqref{eq4-one-line-theorem} instead of~\eqref{eq2-one-line-theorem} even in Case~1, but this would not affect the above argument; see Definition~\ref{def-surface-tiling-proof-with-auxiliary-constructions}.)


    Case 2: $P_{10}\not\in L_4'$. This case is the same as the previous one, up to relabeling points/lines. 
    
    First, we explain the 
    argument informally and then rigorously justify it using matrices. By a similar tiling proof (see Figure~\ref{fig-pappus} to the bottom right), we get $P_{9}'\in L_8$.
    We consequently conclude  
    $$P_{9}'\in L_8\implies P_9'=P_9 \implies L_4'=L_4 \implies P_1'=P_1
    \implies P_1\in L_1.
    $$

    To be precise, we identify an essentially unique way to fill the zero entries of~\eqref{eq4-one-line-theorem} with $\pm 1$ without getting a tautology or a contradiction to the incidence axiom as follows. We put $-1$ in the entries $(1,1)$ and $(10,10)$ of matrix~\eqref{eq4-one-line-theorem}; meaning 
    $P_1\notin L_1$ (no tautology) and $P_{10}\not\in L_4'=:L_{10}$ (Case~2). 
    Analogously to Case~1 (see an automated checking in \cite[Section~2]{github}), we bring the matrix to the form 
\begin{equation*}
M'=\left(\begin{smallmatrix}
 -1 & -1 & -1 & 1 & -1 & -1 & -1 & -1 & 1 & -1 \\
 1 & 1 & -1 & -1 & 1 & -1 & -1 & -1 & -1 & -1 \\
 -1 & 1 & -1 & -1 & -1 & 1 & 1 & -1 & -1 & -1 \\
 -1 & 1 & -1 & -1 & -1 & -1 & -1 & 1 & 1 & -1 \\
 -1 & -1 & 1 & -1 & -1 & -1 & 1 & -1 & 1 & -1 \\
 -1 & -1 & 1 & -1 & 1 & -1 & -1 & 1 & -1 & -1 \\
 1 & -1 & 1 & -1 & -1 & 1 & -1 & -1 & -1 & -1 \\
 -1 & -1 & -1 & 1 & 1 & -1 & 1 & -1 & -1 & 1 \\
 -1 & -1 & -1 & 1 & -1 & 1 & -1 & 1 & -1 & -1 \\
 -1 & 1 & 1 & 0 & -1 & -1 & -1 & -1 & -1 & -1 \\
 -1 & 0 & 1 & 1 & -1 & -1 & -1 & -1 & -1 & -1 \\
 -1 & 1 & 0 & 1 & -1 & -1 & -1 & -1 & -1 & -1 \\
 1 & -1 & -1 & -1 & -1 & -1 & -1 & -1 & 1 & 1 \\
 -1 & -1 & -1 & -1 & -1 & 1 & -1 & -1 & -1 & 1 \\
 -1 & -1 & 1 & -1 & -1 & -1 & -1 & -1 & -1 & 1 \\
 -1 & 1 & -1 & -1 & -1 & -1 & -1 & -1 & -1 & 1 \\
\end{smallmatrix}
\right).
\end{equation*}
For the resulting matrix and the tiling shown in Figure~\ref{fig-pappus} to the bottom right, the properties in Definition~\ref{def-elementary-surface-tiling-proof} hold with the condition $p(i)=l(j)=1$ replaced with $p(i)=14$ and $l(j)=8$ in property~(0). Indeed, property~($+1$) holds by construction. Property~($-1$) holds automatically because $M'$ does not have new entries $1$ compared to~\eqref{eq4-one-line-theorem}, and zero entries are in the rows and columns corresponding to the points $P_{11},P_{12}$ and the line $L_4$ that do not appear in the tiling.

Since $M'_{14,8}=-1$, by Remark~\ref{rem-triangulation}, the incidence theorem with the matrix $M'$ has an elementary surface-tiling proof by contradiction. This concludes Case~2. 

   
    Case 3: $P_{10}\in L_4\cap L_4'$. This gives a tautology or a contradiction to the incidence axiom.
    
    Indeed, put $-1$ in the entry $(1,1)$  of matrix~\eqref{eq4-one-line-theorem} and $1$ in the entry $(10,10)$. Then we bring the matrix to the form (see an automated checking in \cite[Section~3]{github}) 
\begin{equation*}
\left(\begin{smallmatrix}
 -1 & -1 & -1 & 1 & -1 & -1 & -1 & -1 & 1 & -1 \\
 1 & 1 & -1 & -1 & 1 & -1 & -1 & -1 & -1 & -1 \\
 -1 & 1 & -1 & -1 & -1 & 1 & 1 & -1 & -1 & -1 \\
 -1 & 1 & -1 & -1 & -1 & -1 & -1 & 1 & 1 & -1 \\
 -1 & -1 & 1 & -1 & -1 & -1 & 1 & -1 & 1 & -1 \\
 -1 & -1 & 1 & -1 & 1 & -1 & -1 & 1 & -1 & -1 \\
 1 & -1 & 1 & -1 & -1 & 1 & -1 & -1 & -1 & -1 \\
 -1 & -1 & -1 & 1 & 1 & -1 & 1 & -1 & -1 & 1 \\
 -1 & -1 & -1 & 1 & -1 & 1 & -1 & 1 & -1 & -1 \\
 -1 & 1 & 1 & -1 & -1 & -1 & -1 & -1 & -1 & 1 \\
 -1 & -1 & 1 & 1 & -1 & -1 & -1 & -1 & -1 & -1 \\
 -1 & 1 & -1 & 1 & -1 & -1 & -1 & -1 & -1 & -1 \\
 1 & -1 & -1 & -1 & -1 & -1 & -1 & -1 & 1 & 1 \\
 -1 & -1 & -1 & -1 & -1 & 1 & -1 & -1 & -1 & 1 \\
 -1 & 0 & 1 & -1 & -1 & -1 & -1 & -1 & -1 & 1 \\
 -1 & 1 & 0 & -1 & -1 & -1 & -1 & -1 & -1 & 1 \\
\end{smallmatrix}
\right).
\end{equation*}  
Here the entry $(10,4)$ is $-1$ contradicting $P_{10}\in L_4$.  (No tilings are required in Case~3.)
\end{proof}

Сompared to the tiling proof in \cite[Example~8.5]{FP22}, our one 
has no general-position assumptions and, in particular, covers the case when the lines $L_2,L_3,L_4$ are concurrent. We have also corrected the labeling of edge midpoints by using 
$P_1=P_4P_5\cap L_4$ instead of $P_4P_5\cap P_2P_7$. 

To proceed, recall that the Master Theorem generates incidence theorems from tilings. 

Our second 
example is obtained from the simplest possible tiling: two triangles with glued boundaries; see Figure~\ref{fig:one-line-theorem} to the top right and Remark~\ref{rem-triangulation}.

\begin{example}[One-line theorem] \label{ex-one-line-theorem} (See Figure~\ref{fig:one-line-theorem} to the top left)
    Let points $P_1, P_2, P_3$ lie on the extensions of the sides $P_6P_5, P_5P_4, P_4P_6$ of a triangle $P_4P_5P_6$ but be distinct from the vertices. If $P_1, P_2, P_3$ lie on a line $L_2$ and $P_2, P_3$ lie on a line $L_1$ then $P_1$ lies on $L_1$.
\end{example}

\begin{figure}[htb]
    \centering
\includegraphics[width=0.85\textwidth]{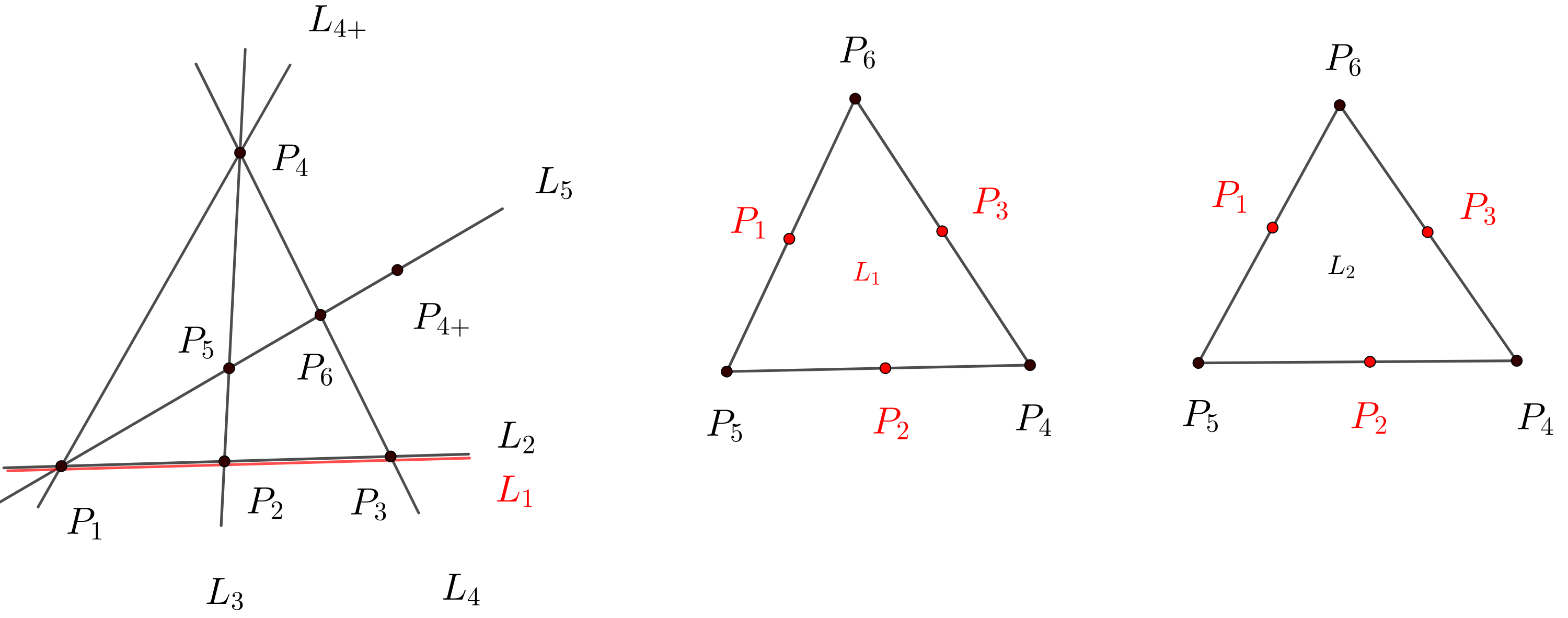}\\[-0.7cm]
\includegraphics[width=0.4\textwidth]{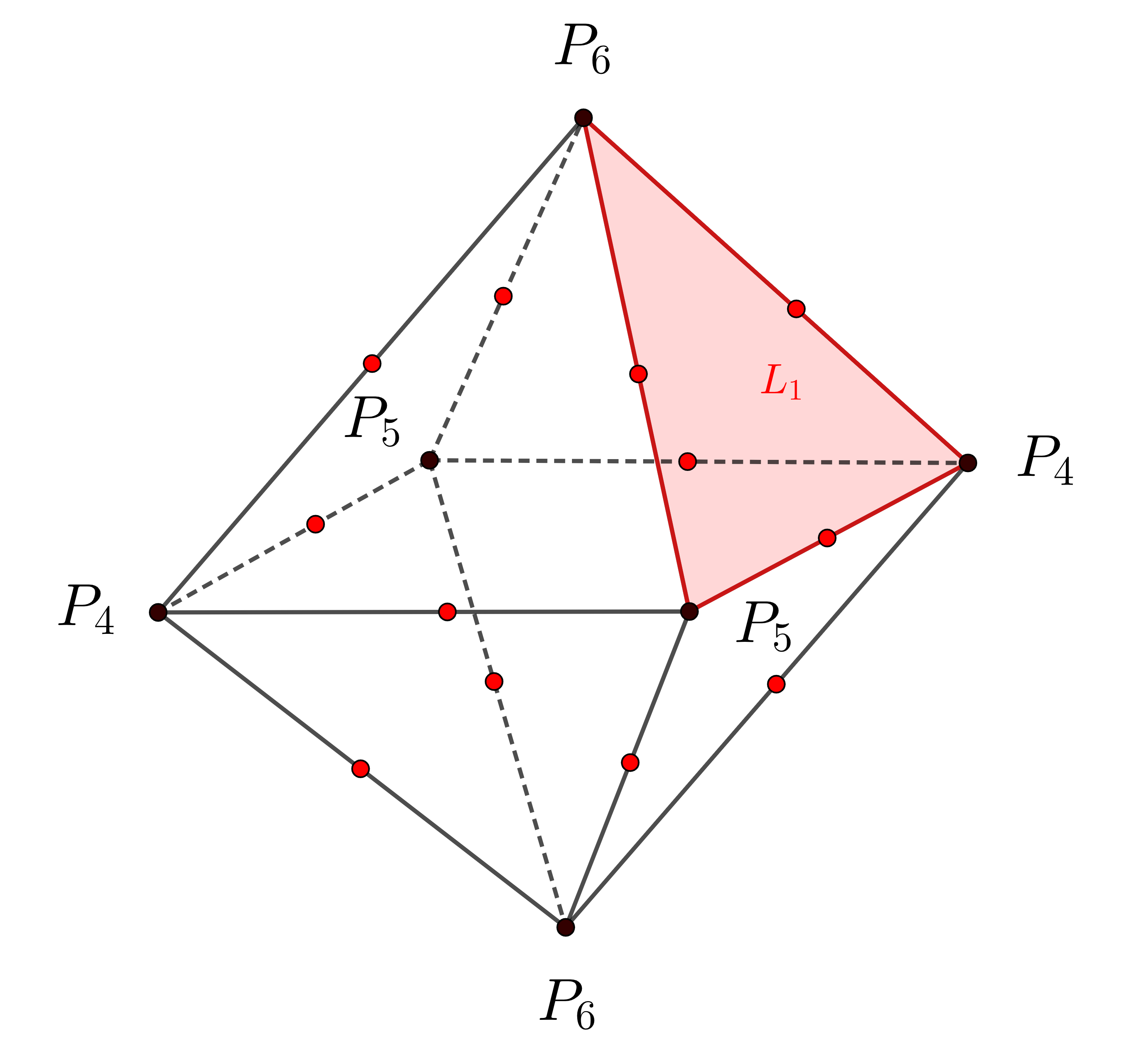}
    \caption{The one-line theorem (top left), a tiling (top right), and a triangulation of a sphere (bottom). 
    The auxiliary constructions in an attempt to prove the incidence axiom are also shown in the top left. The corresponding sides of the two triangles to the top right are identified. All faces of the octahedron other than the red one are labeled by the line $L_2$. All the edges with the endpoints labeled by $P_6$ and $P_5$, $P_5$ and $P_4$, $P_4$ and $P_6$ are labeled by $P_1$, $P_2$, $P_3$ respectively.
    See the tiling proof of Example~\ref{ex-one-line-theorem}.
    }
    \label{fig:one-line-theorem}
\end{figure}


\begin{proof}[Tiling proof] (Cf.~\cite[Example~2.12]{FP22}) See Figure~\ref{fig:one-line-theorem} to the bottom.
\end{proof}

This toy example is just a restatement of the incidence axiom. Here 
\begin{equation}\label{eq-one-line-theorem}
M=
\left(\begin{smallmatrix}
    0 &  1 & -1 & -1 &   1\\
    1 &  1 &  1 & -1 &  -1\\
    1 &  1 & -1 &  1 &  -1\\
    0 &  0 &  1 &  1 &  -1\\
    0 &  0 &  1 & -1 &   1\\
    0 &  0 & -1 &  1 &   1\\
\end{smallmatrix}
\right).
\end{equation}

\begin{remark} However, this does \emph{not} mean that one can prove the incidence axiom using this tiling. An attempt to do so results in a vicious circle, even if one assumes $P_1\ne P_2,P_3$ in addition. Indeed, if one starts with the matrix
$$
M=
\left(\begin{smallmatrix}
    0 & 1 &-1 & -1\\
    1 & 1 & 1 & -1\\
    1 & 1 &-1 &  1
\end{smallmatrix}
\right),
$$
then auxiliary constructions from Definition~\ref{def-surface-tiling-proof-with-auxiliary-constructions} lead to  
$$
M_k=
\left(\begin{smallmatrix}
    0 &  1 & -1 & -1 &  \mathbf{1} &  1 \\
    1 &  1 &  1 & -1 &  \mathbf{0} &  0 \\
    1 &  1 & -1 &  1 &  \mathbf{0} &  0 \\
    0 &  0 &  1 &  1 &  \mathbf{1} &  0 \\
    \mathbf{-1} & \mathbf{-1} & \mathbf{-1} & \mathbf{-1} &  \mathbf{-1}&  \mathbf{1} \\
    0 &  0 &  1 &  0 &  \mathbf{0} &  1 \\
    0 &  0 &  0 &  1 &  \mathbf{0} &  1 \\
\end{smallmatrix}
\right),
$$
where the highlighted column and row correspond to the line $L_{4+}:=P_1P_4$ and a point $P_{4+}\notin L_1,L_2,L_3,L_4,L_{4+}$ respectively (see Figure~\ref{fig:one-line-theorem}). 
Dropping the highlighted column and row gives 
$$
M'=
\left(\begin{smallmatrix}
    0 &  1 & -1 & -1 &  1\\
    1 &  1 &  1 & -1 &  0\\
    1 &  1 & -1 &  1 &  0\\
    0 &  0 &  1 &  1 &  0\\
    0 &  0 &  1 &  0 &  1\\
    0 &  0 &  0 &  1 &  1\\
\end{smallmatrix}
\right).
$$
The resulting matrix \bluenew{does not satisfy condition~($-1$) of Definition~\ref{def-elementary-surface-tiling-proof}: informally, it} does not have enough $(-1)$-s for an elementary surface-tiling proof compared to~\eqref{eq-one-line-theorem}. 
The missing $(-1)$-s can be obtained by case distinction and contradiction to the incidence axiom, but this means a vicious circle (using the axiom in the proof of itself).
\end{remark}

\section{Complex geometry}
\label{sec:complex}

\subsection{Untilable theorems}

We start with incidence geometry over complex numbers, having a particularly nice structure. Our first result is an incidence theorem over $\mathbb{C}$ (involving just $7$ points) that does not follow from the Master Theorem: 

\begin{example}[Fano axiom] \label{ex-Fano} (See Figure~\ref{fig-fano} to the left.)
     Let points $D\ne B$ and $E\ne B,C$ lie on 
     the \bluenew{sides} $AB$ and $BC$ of a triangle $ABC$. Take $F\in AC$ such that $BF$ passes through $CD\cap AE$. If $F\in DE$ then $D\in AC$.
\end{example}

The conclusion of the incidence theorem just means that $A=D$. 
Hereafter, \bluenew{by the \emph{sides} of a triangle $ABC$, we mean the lines $AB$, $BC$, and $CA$ (rather than the segments)}.

\begin{figure}[htbp]
    \centering
    \includegraphics[width=0.37\textwidth]{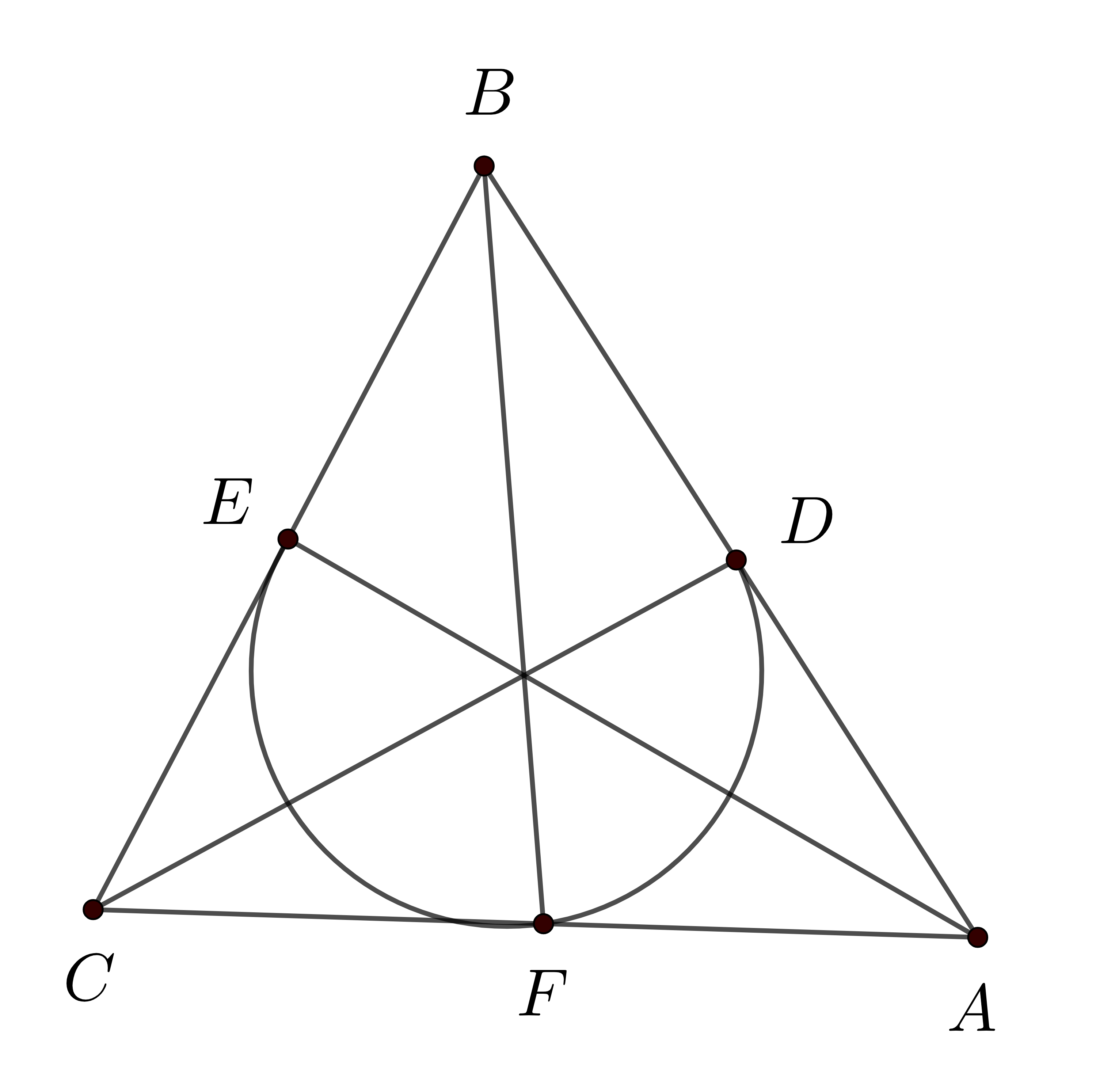}
    \includegraphics[width=0.62\textwidth]{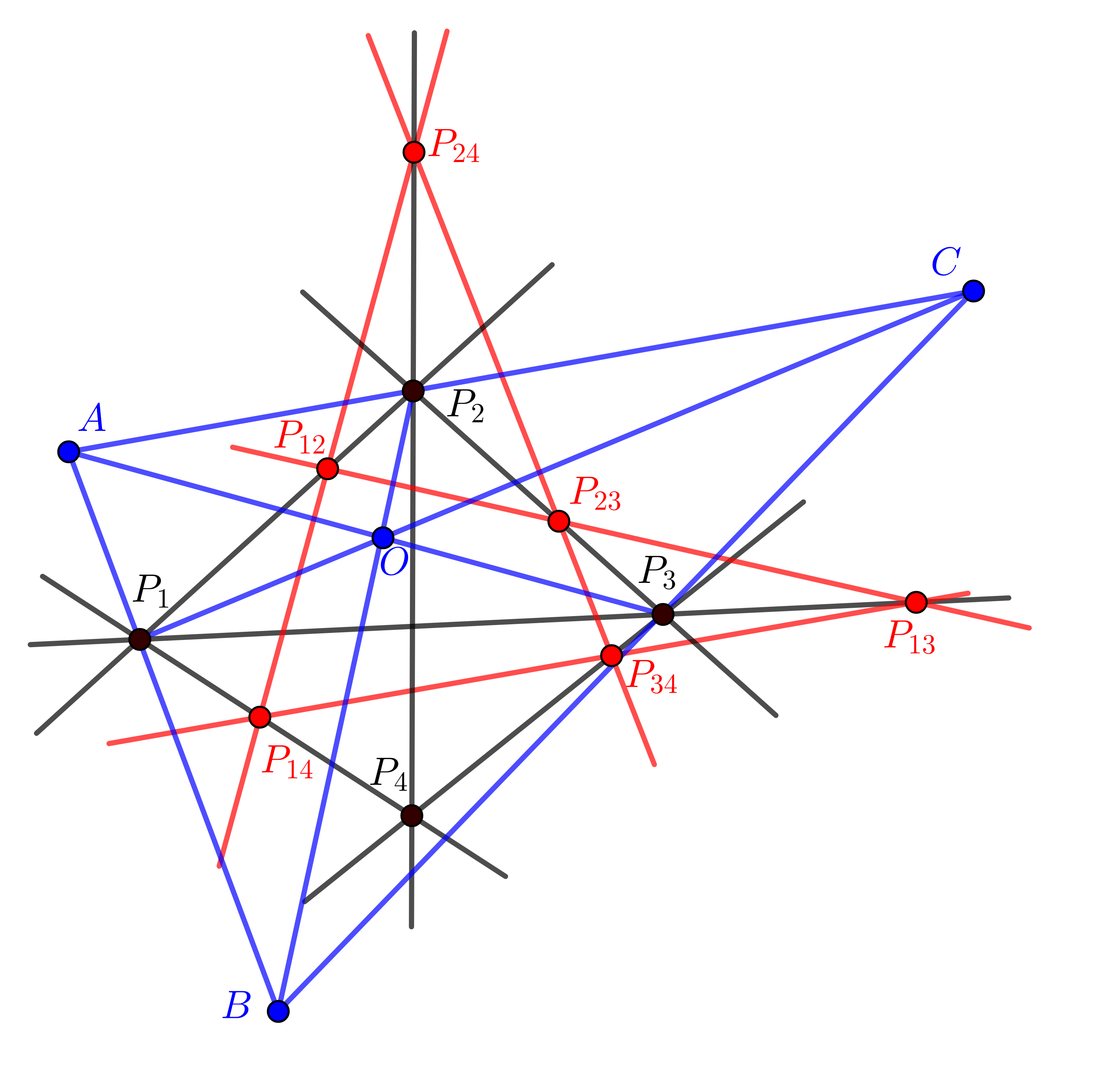}
    \caption{Fano configuration (left). It is realizable over a field of characteristic $2$. The coupled Fano axiom and Desargues' theorem (right).  
    The blue part corresponds to the Fano axiom, and the rest is Desargues' configuration from Figure~\ref{fig-desargues}. See Examples~\ref{ex-Fano} and~\ref{ex-Fano-Desargues}. 
    }
    \label{fig-fano}
\end{figure}

\begin{proposition}\label{p-Fano} The incidence theorem in Example~\ref{ex-Fano} is true over $\mathbb{C}$ 
but has no surface-tiling proof.
\end{proposition}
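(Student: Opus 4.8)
The statement has two parts: (i) the Fano axiom in Example~\ref{ex-Fano} is true over $\mathbb{C}$, and (ii) it has no surface-tiling proof. For (i), I would argue directly: suppose $D\ne B$, $E\ne B,C$ lie on $AB$, $BC$ of a triangle $ABC$, and $F=AC\cap DE$ lies on the line $BG$ where $G=CD\cap AE$; I must show $A=D$. Choose projective coordinates so that $A=(1:0:0)$, $B=(0:1:0)$, $C=(0:0:1)$. Then $D$ on $AB$ is $(1:d:0)$ and $E$ on $BC$ is $(0:e:1)$ for some scalars $d,e$ (with $d\ne 0$ since $D\ne B$, and $e\ne 0$, $1/e$ finite since $E\ne B,C$). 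Computing $G=CD\cap AE$, the line $BG$, its intersection $F$ with $AC=\{y=0\}$, and then imposing $F\in DE$ yields a polynomial equation in $d,e$; over a field of characteristic $\ne 2$ this forces $d=0$, i.e. $D=A$, while in characteristic $2$ it is satisfied identically (this is precisely why the Fano configuration is realizable there). Since $\mathrm{char}\,\mathbb{C}=0$, the theorem holds over $\mathbb{C}$. (In fact it holds over any field of characteristic $\ne 2$, but only the complex case is needed here.)

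For (ii), the key idea is already signposted in the introduction: a surface-tiling proof, by Theorem~\ref{th-master-theorem} (the Master Theorem), would make the incidence theorem true over \emph{every} infinite field. So it suffices to exhibit an infinite field over which the Fano axiom in Example~\ref{ex-Fano} is \emph{false}. The Fano configuration is realizable over any field of characteristic $2$; in particular over the infinite field $\mathbb{F}_2(X)$ (or the algebraic closure $\overline{\mathbb{F}_2}$). Concretely, I would write down an explicit point-line configuration over $\mathbb{F}_2$ — the seven points and seven lines of the Fano plane — satisfying all the hypotheses encoded by the incidence matrix $M$ of Example~\ref{ex-Fano} but with $D\ne A$, hence violating the conclusion $D\in AC$; then base-change this same configuration to the infinite extension $\mathbb{F}_2(X)$, which still satisfies the hypotheses (all the required incidences and non-incidences are preserved, non-incidences because distinct points/lines of $\mathbb{F}_2^2\subset\mathbb{F}_2(X)^2$ stay distinct) and still violates the conclusion. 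Thus the incidence theorem is false over the infinite field $\mathbb{F}_2(X)$, so by Theorem~\ref{th-master-theorem} it cannot have a surface-tiling proof.

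The one subtlety to check carefully is that the incidence matrix $M$ underlying Example~\ref{ex-Fano} — with all its $\pm1$ entries, including the non-incidences $D\ne B$, $E\ne B,C$ and the triangle condition on $ABC$ — is genuinely satisfiable in the characteristic-$2$ model, i.e. that the "human-readable'' hypotheses translate into exactly the non-incidences that the Fano plane realizes. This is routine but must be done: one picks the standard labeling of the Fano plane, identifies $A,B,C$ with three non-collinear points, $D$ on line $AB$ distinct from $B$ (so $D$ is the third point of that line, forcing $D\ne A$ as well, which is fine since we \emph{want} $D\ne A$), $E$ the third point of line $BC$, $G=CD\cap AE$ the remaining point, $F$ the third point of line $AC$, and then verifies the Fano plane's combinatorics give $F\in DE$ and $B,G,F$ collinear. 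I expect this bookkeeping to be the main place where care is needed; the rest follows formally from the Master Theorem and Remark~\ref{rem-extension}. I would also remark that the same argument, combined with Remark~\ref{rem-extension}, shows the stronger statement recorded later (Example~\ref{ex-Fano}): there is no simplicial-complex proof either, once the corresponding generalized Master Theorem is available.
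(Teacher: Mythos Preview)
Your proposal is correct and, for part~(ii), follows exactly the paper's route: invoke Theorem~\ref{th-master-theorem} contrapositively and exhibit the Fano configuration over an infinite field of characteristic~$2$. Your explicit bookkeeping over $\mathbb{F}_2(X)$ is more detailed than the paper's one-line reference to ``the Fano configuration \dots\ contained in the projective plane over any field of characteristic $2$'', but the idea is identical.

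For part~(i) you take a different but equally valid path. The paper argues via cross-ratio: the harmonic property of a complete quadrilateral forces the cross-ratio of $A,C,F,AC\cap DE$ to be $-1$, while $F\in DE$ forces it to be $1$, so $1=-1$ is impossible in characteristic $\ne 2$. Your coordinate computation (with $A,B,C$ the standard frame) reduces $F\in DE$ to the single equation $2de=0$, whence $d=0$ and $D=A$ over $\mathbb{C}$. Both arguments are short; yours is more self-contained, the paper's makes the role of characteristic~$2$ conceptually transparent. One small slip: you write ``$d\ne 0$ since $D\ne B$'', but in your parametrization $D=(1:d:0)$ the condition $D\ne B$ is automatic and says nothing about $d$; what matters is $e\ne 0$ (from $E\ne C$), which you do use correctly.

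Your closing remark about simplicial-complex proofs overshoots slightly: the argument for Proposition~\ref{p-Fano-simplicial} is not quite ``the same argument'' plus Remark~\ref{rem-extension}; it additionally needs the embedding $\mathbb{F}_2(X)^*\hookrightarrow\mathbb{R}^*$ (or $\mathbb{C}^*$) and Corollary~\ref{cor-passing-subgroup}, since the generalized Master Theorem alone only gives truth over the specific field used in the proof.
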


\begin{proof} 
This incidence theorem is true over $\mathbb{C}$, 
otherwise the cross-ratio of 
$A,C,F,AC\cap DE$ equals both $1$
(because $AC\cap DE=F$) and $-1$ (by the harmonic property of a quadrilateral).

If it had a surface-tiling proof, then it would be true over any infinite field by Theorem~\ref{th-master-theorem}. But over an infinite field of characteristic $2$, there is a counterexample: it is the \emph{Fano configuration}, that is, the projective plane over the field with $2$ elements, which is contained in the projective plane over any field of characteristic $2$. Thus, there is no surface-tiling proof.
\end{proof}

We emphasize that 
we have proved the absence of a surface-tiling proof with \emph{any} number of auxiliary constructions and case distinctions, not just an elementary surface-tiling proof. Thus, we need a counterexample over an \emph{infinite} field, not just
the field with $2$ elements (leaving no space for auxiliary constructions). 

Example~\ref{ex-Fano} is a variation of \emph{Fano axiom} used in some axiomatizations of geometry \cite{Hartshorne}. In our setup, it is an incidence theorem, not an axiom. Equivalently, the incidence theorem states that the projective plane over $\mathbb{F}_2$ 
does not embed into the projective plane over~$\mathbb{C}$. Clearly, Example~\ref{ex-Fano} remains true over any field of characteristic distinct from $2$, not just $\mathbb{C}$.

Example~\ref{ex-Fano} is 
quite degenerate. 
Traditionally, one prefers constructive incidence theorems. 
Informally, these are the ones for which the configuration can be built step by step (using the auxiliary constructions listed before Definition~\ref{def-surface-tiling-proof-with-auxiliary-constructions}) so that the last incidence is automatic. 
Formally, an incidence theorem with a matrix $M$ is \emph{constructive}, if there is a sequence of auxiliary constructions $M_0,\dots, M_k$ (see Definition~\ref{def-surface-tiling-proof-with-auxiliary-constructions}) such that 
$M_0$ has size $1\times 1$ and
$M_k$ is obtained from $M$ by swapping the first and last rows and also the first and last columns.

A subclass of constructive theorems is \emph{closure theorems}, which state that some construction always produces a periodic sequence of points. Technically, closure theorems are not incidence theorems in our sense, but can usually be restated as incidence theorems.

Let us give an example of a constructive theorem that does not follow from the Master Theorem. It is obtained from Desargues' theorem (Example~\ref{ex-desargues}) by replacing the assumption that points $P_1,P_2,P_3$ are non-collinear with the assumption that they are the feet of three cevians of a triangle. Surprisingly, this seemingly inessential modification is game-changing. 


\begin{example}[Coupled Fano axiom and Desargues theorem] \label{ex-Fano-Desargues} (See Figure~\ref{fig-fano} to the right.)
Let $A,B,C,O$ be four points in the plane such that no three of them are collinear. Let $P_1:=OA\cap BC$, $P_2:=OB\cap CA$, $P_3:=OC\cap AB$. Take a point $P_4$ not lying on the lines $P_1P_2, P_2P_3, P_3P_1$.

Pick 
non-collinear points $P_{14},P_{24},P_{34}\notin\{ P_1, P_2, P_3, P_4\}$ on 
$P_1P_4, P_2P_4, P_3P_4$ respectively. 
Take any $P_{23}\in P_2P_3\cap P_{24}P_{34}$, $P_{13}\in P_1P_3\cap P_{14}P_{34}$, $P_{12}\in P_1P_2\cap P_{13}P_{23}$. Then $P_{12}\in P_{14}P_{24}$.
\end{example}

\begin{proposition} \label{p-Fano-Desargues}
The 
theorem in Example~\ref{ex-Fano-Desargues} is true over $\mathbb{C}$ but has no surface-tiling proof.
\end{proposition}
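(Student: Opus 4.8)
The plan has two independent parts: showing the theorem in Example~\ref{ex-Fano-Desargues} is true over $\mathbb{C}$, and showing it admits no surface-tiling proof. For the second part I would use exactly the strategy of Proposition~\ref{p-Fano}: by Theorem~\ref{th-master-theorem}, a surface-tiling proof would force the theorem to hold over every infinite field, so it suffices to produce a counterexample over an infinite field of characteristic $2$. The key observation is that the configuration in Example~\ref{ex-Fano-Desargues} \emph{contains} the Fano configuration of Example~\ref{ex-Fano}: the cevian feet $P_1, P_2, P_3$ of the triangle $ABC$ with cevian point $O$, together with $A, B, C, O$, form (by the classical fact that the diagonal points of a complete quadrilateral are collinear iff the characteristic is $2$) a Fano-type incidence. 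More precisely, I would argue: over a field of characteristic $2$, the points $P_1, P_2, P_3$ are collinear (this is the content of the Fano axiom being false in characteristic $2$, applied to the complete quadrilateral $ABCO$). But in Example~\ref{ex-Fano-Desargues} the point $P_4$ is chosen \emph{not} on the line $P_1P_2P_3$, and then $P_{14}, P_{24}, P_{34}$ are picked on the cevians from $P_4$; one then runs the Desargues-type conclusion. So I would explicitly write down a point configuration over $\mathbb{F}_2(X)$ (or simply over $\mathbb{F}_2$ itself extended, since all constructions here are constructive and generic choices survive base change) where all the hypotheses hold but $P_{12} \notin P_{14}P_{24}$, exploiting that the "hidden" collinearity of $P_1,P_2,P_3$ (valid only in characteristic $2$) collapses the Desargues configuration in a way that breaks the conclusion. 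The cleanest route is: fix the Fano plane realization over $\mathbb{F}_2$ for $A,B,C,O,P_1,P_2,P_3$, pass to an infinite extension of characteristic $2$ to have room for the auxiliary free choices of $P_4, P_{14}, P_{24}, P_{34}$, and then check the conclusion fails by a direct coordinate computation.

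For truth over $\mathbb{C}$: I would reduce Example~\ref{ex-Fano-Desargues} to ordinary Desargues' theorem (Example~\ref{ex-desargues}) by checking that over a field of characteristic $\ne 2$ the points $P_1, P_2, P_3$ are \emph{never} collinear (again the harmonic/cross-ratio argument of Proposition~\ref{p-Fano}, or equivalently Ceva's theorem: collinearity of the three cevian feet would force a product of ratios equal to both $+1$ and $-1$). Hence the hypotheses of Example~\ref{ex-Fano-Desargues} are a special case of the hypotheses of Example~\ref{ex-desargues} — the triangle $P_1P_2P_3$ is genuinely nondegenerate, $P_4$ plays the role of the apex, and $P_{14}, P_{24}, P_{34}$ are points on $P_iP_4$ — so Desargues' theorem (which, being provable by a sphere-tiling, holds over $\mathbb{C}$ by Lemma~\ref{l-elementary-lemma}) yields $P_{12} \in P_{14}P_{24}$.

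The main obstacle I anticipate is the careful bookkeeping in the characteristic-$2$ counterexample: one must verify that \emph{all} the non-incidence hypotheses encoded by the $-1$ entries of the matrix $M$ (the various distinctness and non-collinearity conditions, e.g.\ $P_4 \notin P_1P_2$, the $P_{i4}$ distinct from the labelled points and mutually non-collinear) can be \emph{simultaneously} satisfied over an infinite field of characteristic $2$ while the final incidence still fails. This is a genericity argument — the bad locus where some non-incidence is violated is a proper subvariety, and over an infinite field its complement is nonempty — but it requires showing the conclusion-failure locus is not contained in the union of these bad loci. The honest way is to exhibit one explicit configuration (say over $\mathbb{F}_4$ or $\mathbb{F}_2(X)$) and check all the conditions by hand or by the automated verification already used elsewhere in the paper; I would present the explicit configuration and defer the routine incidence checks. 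A secondary subtlety is making sure the cross-ratio/Ceva argument for "$P_1,P_2,P_3$ not collinear in characteristic $\ne 2$" is stated correctly in the projective setting — it is the standard dual statement to the non-existence of the Fano subplane, which is precisely Proposition~\ref{p-Fano}, so it can be invoked rather than re-proved.
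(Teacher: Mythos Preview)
Your proposal is correct and follows essentially the same route as the paper's proof: truth over $\mathbb{C}$ via the Fano axiom (Proposition~\ref{p-Fano}) reducing to Desargues, and absence of a surface-tiling proof via a characteristic-$2$ counterexample built on the Fano configuration.

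However, the ``main obstacle'' you anticipate --- the bookkeeping and explicit coordinate computation for the counterexample --- dissolves once you notice a structural fact that the paper exploits. In characteristic~$2$, once $P_1,P_2,P_3$ are collinear on a line~$\ell$, the hypotheses force $P_{13}\in P_1P_3=\ell$ and $P_{23}\in P_2P_3=\ell$, so $P_{13}P_{23}=\ell=P_1P_2$. Hence the condition $P_{12}\in P_1P_2\cap P_{13}P_{23}$ is vacuous: $P_{12}$ may be \emph{any} point of~$\ell$, and in particular one not on $P_{14}P_{24}$. Thus the paper's counterexample reads simply: let $A,B,C,O,P_1,P_2,P_3$ form a Fano configuration, choose $P_4,P_{14},P_{24},P_{34},P_{13},P_{23}$ satisfying the stated hypotheses (a straightforward genericity over the infinite extension), and take $P_{12}\in\ell\setminus P_{14}P_{24}$. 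No coordinate computation is required.
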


\begin{proof}
This incidence theorem is true over $\mathbb{C}$ by the Desargues theorem (Example~\ref{ex-desargues}) because $P_1,P_2,P_3$ are non-collinear by the Fano axiom (Example~\ref{ex-Fano} and Proposition~\ref{p-Fano}).
If the theorem had a surface-tiling proof, then it would be true over an infinite field of characteristic $2$ by Theorem~\ref{th-master-theorem}. But there is a counterexample: let 
$A,B,C,O,P_1,P_2,P_3$ form a Fano configuration so that $P_1,P_2,P_3$ are collinear, let $P_{14},P_{24},P_{34}, P_{13}, P_{23}$ be as in the theorem,
and let $P_{12}$ be any point on $P_1P_2=P_{13}P_{23}$ but not 
$P_{14}P_{24}$.
Thus, there is no surface-tiling proof. 
\end{proof}

The construction in Example~\ref{ex-Fano-Desargues} is quite universal. On the one hand, a similar coupling with the Fano axiom can make almost any incidence theorem surface-tiling unprovable, just like we have done with Desargues' theorem. On the other hand, coupling with Desargues' theorem can decrease the degeneracy of our examples below, just like we have done with the Fano axiom. 


\subsection{\bluevarnew{Completeness} questions}

We see that the variety of all true incidence theorems is too large to be generated by
tilings. However, one can restrict oneself to a reasonable class of theorems to guarantee tiling proofs. A natural candidate is incidence theorems that are true over any field.

\begin{problem}
Does an incidence theorem that is true over any field have a surface-tiling proof?
\end{problem}

We conjecture that the answer is negative.
The following example is a reasonable candidate.

\begin{example}[Twin-Fano theorem] \label{ex-twin-fano} (See Figure~\ref{fig:twin-fano}.)
    Let points $D\ne B$ and $E\ne B,C$ lie on 
    the \bluenew{sides} $AB$ and $BC$ of a triangle $ABC$. Take $F\in AC$ such that $BF$ passes through $CD\cap AE$. Let $F\in DE$.

    Let points $D'\ne A',B'$ and $E'\ne B',C'$ lie on the 
    \bluenew{sides} $A'B'$ and $B'C'$ of a triangle $A'B'C'$. Take $F'\in A'C'$ such that $B'F'$ passes through $C'D'\cap A'E'$. Let $G'=D'E'\cap A'C'$.

    Let $A\ne F',G'$ and $D\ne G'$. 
    If $P\in DG'$ and $P\in AF'$ then $P\in AG'$.
\end{example}

\begin{proof} 
If the field characteristic is not $2$, then by the Fano axiom (Example~\ref{ex-Fano}) we get $A=D$.

If the field characteristic is $2$, then by the harmonic property of a quadrilateral and the equality $1=-1$ we get $F'=G'$. (Here we do not even need the construction in the first paragraph of Example~\ref{ex-twin-fano}.)

In either case, the incidence theorem follows.
\end{proof}

\begin{figure}
    \centering
\includegraphics[width=0.8\textwidth]{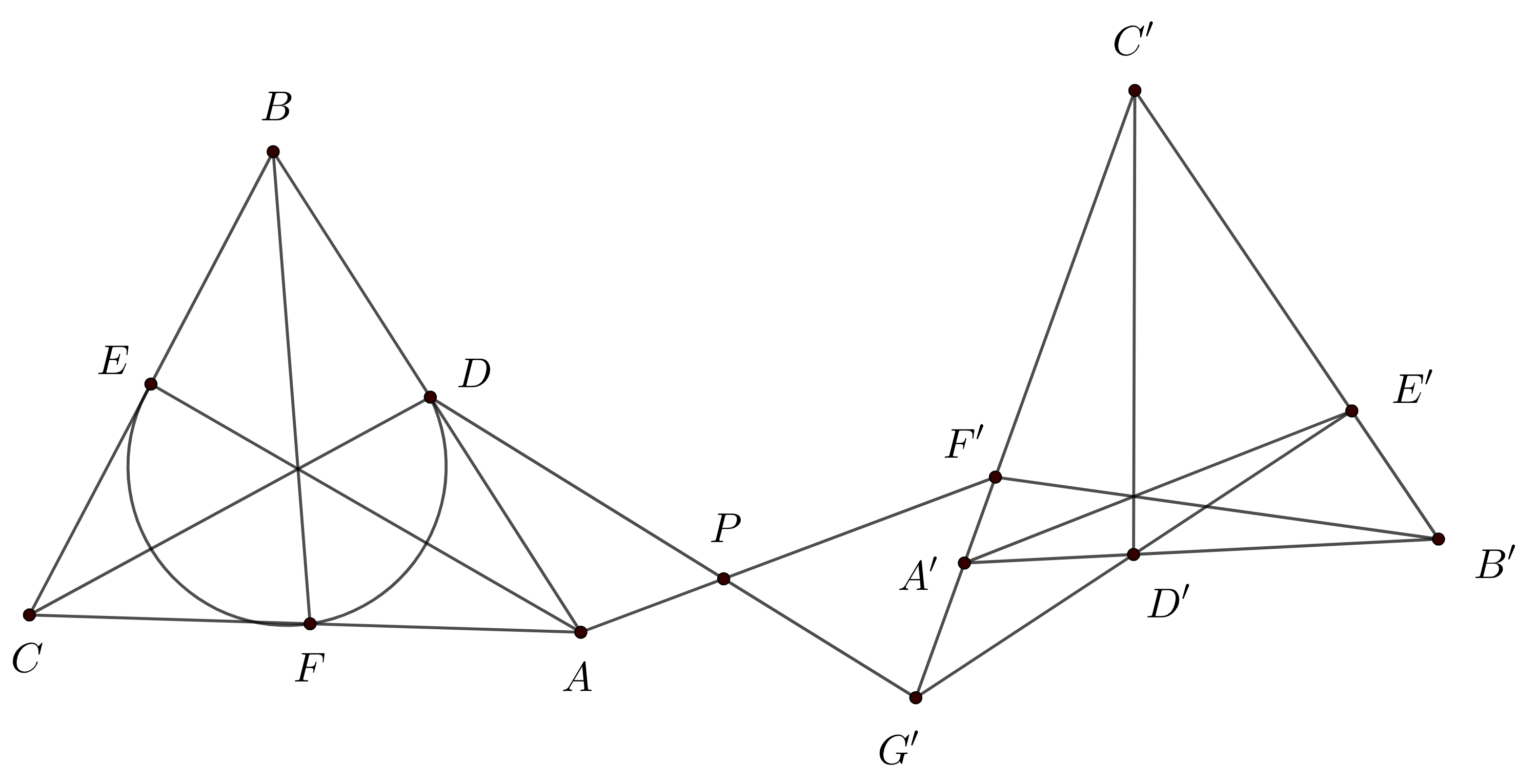}
    \caption{Twin-Fano theorem. See Example~\ref{ex-twin-fano}}
    \label{fig:twin-fano}
\end{figure}

It is intuitively clear that there is no surface-tiling proof because the configuration has two completely independent parts (formed by the points with and without primes, respectively).

\section{Real geometry}
\label{sec:real}

\subsection{Difference from complex geometry}

The incidence geometry over real numbers is essentially different from the one over complex numbers. This is already seen from the famous Sylvester--Gallai theorem; see, e.g., \cite{Green-Tao-13}. The theorem states that every finite set of points in the real projective plane has a line that passes through exactly two of the points or a line that passes through all of them. This is not true over complex numbers. A counterexample is the Hesse configuration, that is, the affine plane over the field with three elements. See Figure~\ref{fig-hesse}. It can be realized as the configuration of the nine inflection points of a smooth cubic curve on the complex projective plane, but not on the real one.

\begin{figure}[htbp]
    \centering
    \includegraphics[width=0.8\textwidth]{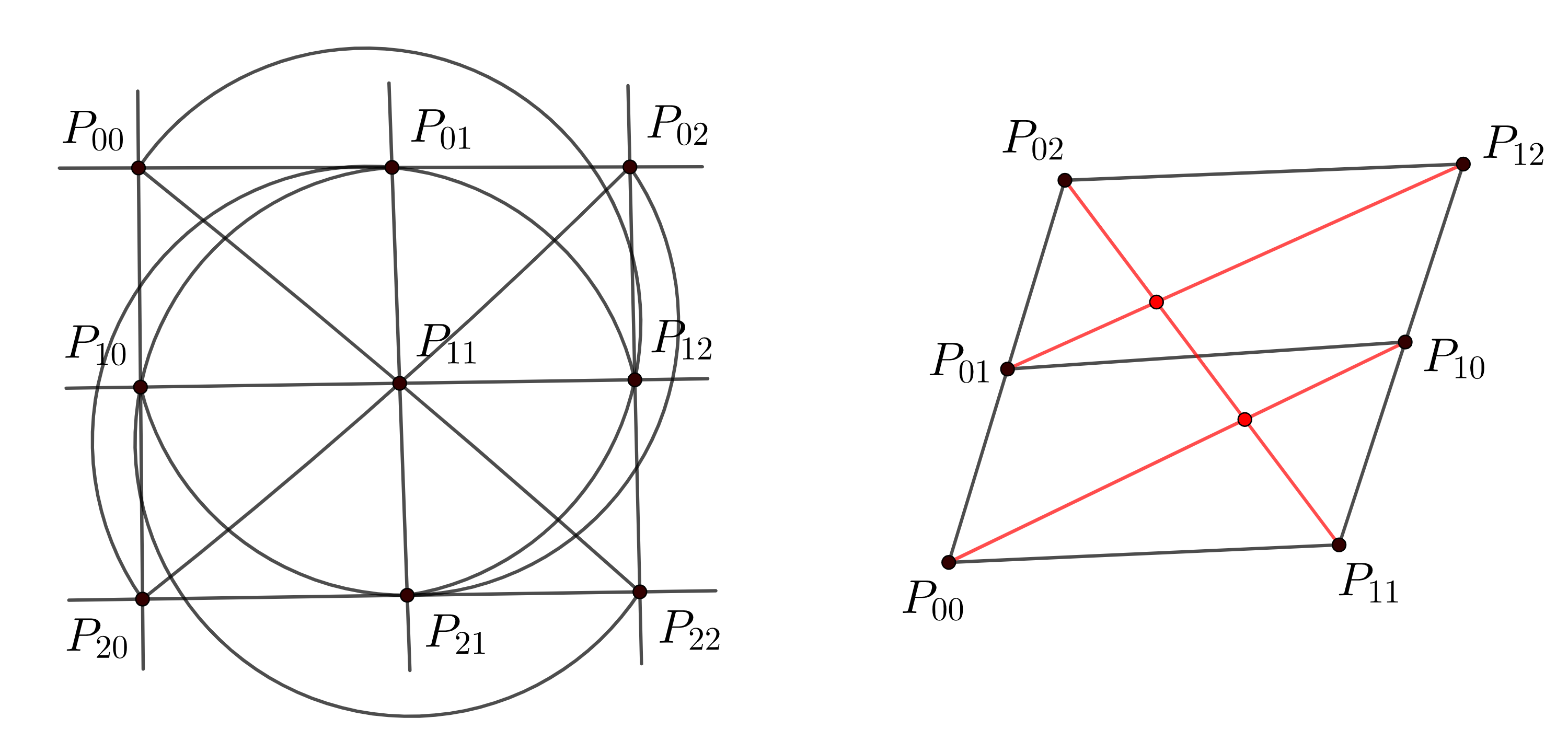}
    \caption{ Hesse configuration (left) and a projective transformation of its part (right). Since the red lines cannot be concurrent, the Hesse configuration cannot be realized in the real projective plane. See Example~\ref{ex-Hesse}.}
    \label{fig-hesse}
\end{figure}

Let us give a particular example of an incidence theorem that is true over $\mathbb{R}$ but not $\mathbb{C}$; it is the case $k=3$ of the following sequence of incidence theorems depending on a parameter $k$.

\begin{example}[$3k$-gon property] \label{ex-9-gon} (See Figure~\ref{fig-9-gon} to the left)
    Pick points $D,E,F\notin\{A,B,C\}$ on 
    the \bluenew{sides} $AB,BC,CA$ of a triangle $ABC$ and a point $O$ not on the \bluenew{sides}. 
    Starting with a point $P_1\in OA$ distinct from $O$ and $A$, construct $P_2:=OB\cap P_1D$, $P_3:=OC\cap P_2E$, $P_4:=OA\cap P_3F$, and so on. 
    If $P_1=P_{3k+1}$ then $D\in EF$. 
\end{example}

The conclusion of the theorem is equivalent to $P_1=P_4$, by the Desargues theorem. (For $k=1$, this incidence theorem is just the \bluenew{Desargues} theorem.) 

\begin{figure}[htbp]
    \centering
   \includegraphics[width=0.57\textwidth]{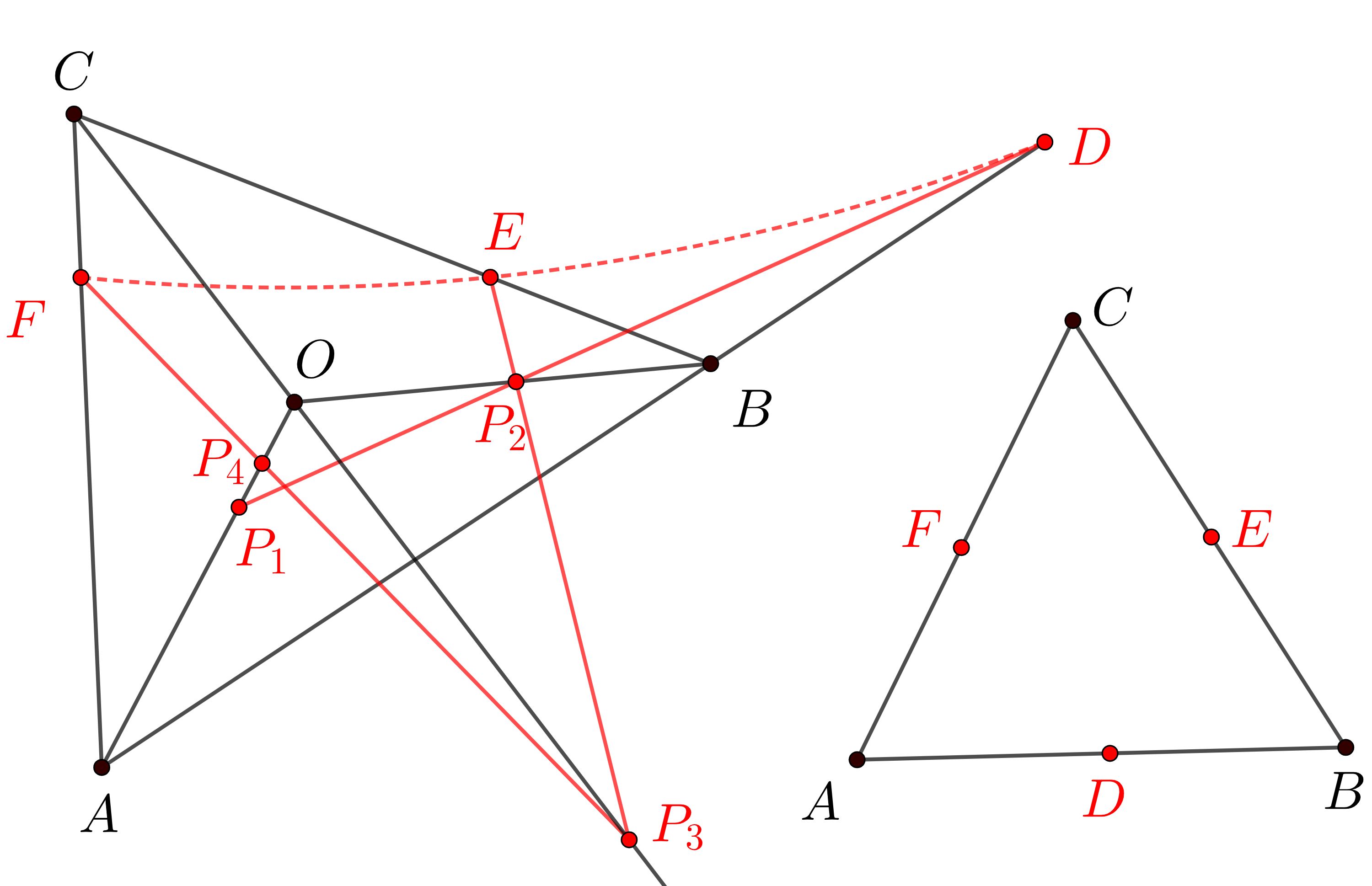}    
   \includegraphics[width=0.42\textwidth]{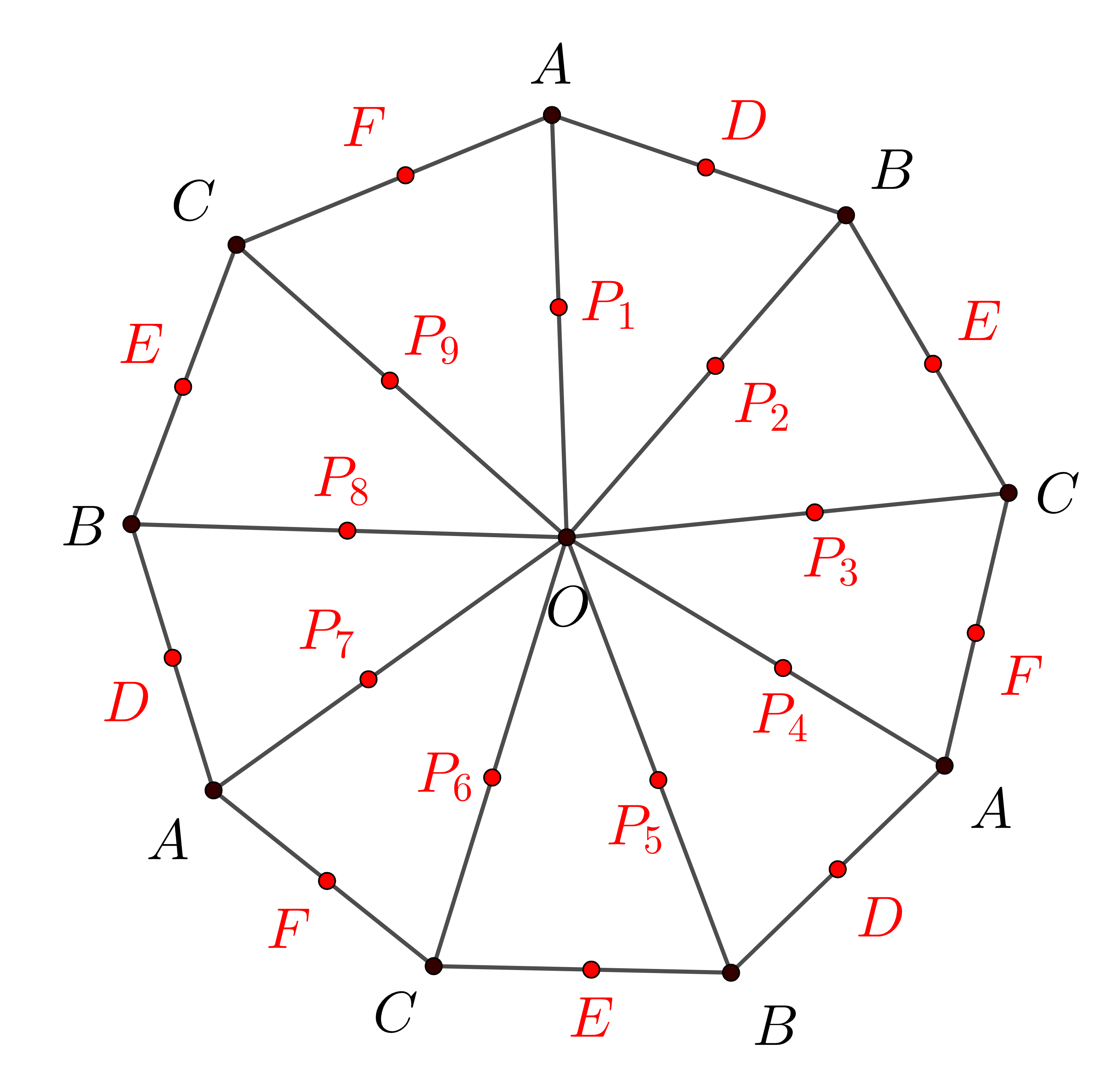}
    \caption{
    The 9-gon property (left) and a tiling of a 9-gon (right). Identifying the sides of the triangle $ABC$ and 
    of the 9-gon labeled by $AB$, $BC$, $CA$, we get a simplicial-complex proof of the 9-gon property over $\mathbb{R}$.
    See Example~\ref{ex-9-gon}, the proof of Proposition~\ref{p-9-gon}, and Definition~\ref{def-simplicial-complex-proof}.
    }
    \label{fig-9-gon}
\end{figure}

\begin{proposition}\label{p-9-gon} The incidence theorem in Example~\ref{ex-9-gon} for $k=3$ is true over $\mathbb{R}$ but false over $\mathbb{C}$ and hence has no surface-tiling proof.
\end{proposition}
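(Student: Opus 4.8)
The statement has three parts: (a) the $9$-gon property holds over $\mathbb{R}$; (b) it fails over $\mathbb{C}$; (c) therefore it has no surface-tiling proof. Part (c) is immediate from (b): a surface-tiling proof would, by the Master Theorem~\ref{th-master-theorem}, make the theorem true over every infinite field, in particular over $\mathbb{C}$, contradicting (b). So the work is in (a) and (b).

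For part (b) over $\mathbb{C}$, the plan is to produce an explicit counterexample. The hint in the text is that the conclusion $P_1 = P_{3k+1}$ should be read projectively: the map sending $P_1 \in OA$ to $P_4 \in OA$ obtained by the three-step construction $P_1 \mapsto P_2 \mapsto P_3 \mapsto P_4$ is a projective transformation (composition of three perspectivities) of the line $OA$ to itself, and by Desargues (as remarked after Example~\ref{ex-9-gon}) this map is the identity precisely when $D \in EF$. Iterating $k$ times, $P_1 = P_{3k+1}$ says the $k$-th power of this projectivity is the identity. So I would choose the triangle $ABC$, the point $O$, and the points $D, E, F$ so that the resulting projectivity $\varphi$ of the line $OA$ (a copy of $\mathbb{C}P^1$) has order exactly $3$ rather than order $1$ — e.g. conjugate to $z \mapsto \zeta z$ with $\zeta$ a primitive cube root of unity, which has fixed points but is not the identity. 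Such a $\varphi$ with finite order $>1$ exists over $\mathbb{C}$ (any diagonalizable matrix with eigenvalue ratio a root of unity) but \emph{not} over $\mathbb{R}$ for order $3$, since a real projectivity of $\mathbb{R}P^1$ of finite order $>2$ cannot be orientation-preserving with two real fixed points and eigenvalue ratio $-1$ is the only real root-of-unity ratio. One then picks $P_1$ a generic point (not a fixed point of $\varphi$), so $P_1 = P_{10}$ holds but $D \notin EF$: a genuine counterexample over $\mathbb{C}$. I would verify the perspectivity-composition claim by a short coordinate computation or cite the standard fact that the ``$3k$-gon'' closure is governed by a projectivity, as in the theory of Poncelet-type closure theorems.

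For part (a) over $\mathbb{R}$: I expect the argument to run through the same projectivity $\varphi$ of the real line $OA \cong \mathbb{R}P^1$. The construction forces $P_1 = P_{3k+1}$, i.e. $\varphi^k = \mathrm{id}$. Now a projective transformation of $\mathbb{R}P^1$ of finite order must be conjugate to a rotation (order arbitrary) or be the identity — but a rotation of $\mathbb{R}P^1$ of order $k$ has \emph{no} real fixed points when $k \geq 2$, whereas $\varphi$ visibly fixes the points $O$ and $A$ of the line $OA$ (indeed $O \in OA$ is sent to $O$ under all three perspectivities since it lies on $OA$, $OB$, $OC$; and $A$ likewise, being on $OA$, and its image after the first step stays on $OA$... here I must be careful: $A$ may not literally be fixed, so I would instead identify the two actual fixed points of $\varphi$ using the Desargues configuration). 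Since $\varphi$ has a real fixed point and finite order, it must be the identity, hence $D \in EF$ by the Desargues-based remark. Alternatively, and perhaps more cleanly, I would give the direct \emph{simplicial-complex proof} promised in Figure~\ref{fig-9-gon} (right): tile a $9$-gon by triangles, each triangle an instance of Menelaus, with the outer boundary of the $9$-gon identified with the three sides $AB, BC, CA$ of the triangle $ABC$; the product of the $9$ Menelaus relations around the complex yields the Menelaus relation certifying $D \in EF$. This is exactly the kind of tiling proof (over $\mathbb{R}$, using that cross-ratios/signed ratios behave well) that the Master Theorem setup permits on a complex that is not a closed surface. I would present this tiling explicitly, check properties (0), ($+1$), ($-1$) of Definition~\ref{def-elementary-surface-tiling-proof} (suitably adapted to the simplicial-complex setting of Definition~\ref{def-simplicial-complex-proof}), and invoke the appropriate Master Theorem.

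\textbf{Main obstacle.} The crux is part (b): making the counterexample over $\mathbb{C}$ fully rigorous. I need to (i) establish that the three-step map is genuinely a projective transformation of the line $OA$ and pin down its two fixed points in terms of the configuration, (ii) show that ``$\varphi = \mathrm{id} \iff D \in EF$'' really is the content of the $k=1$ Desargues remark, and (iii) exhibit concrete complex coordinates for $A, B, C, O, D, E, F$ realizing an order-$3$ $\varphi$ while keeping all the genericity/non-degeneracy hypotheses ($D \neq B$, etc., $O$ off the sides, the intermediate intersection points well-defined and distinct). Step (iii) is where a careless choice can collapse the example, so I would either do the linear algebra on $2\times 2$ matrices abstractly and then transport it back to an explicit configuration, or simply note that the space of configurations is irreducible and the ``order of $\varphi$ divides $3$'' locus is a proper subvariety not contained in ``$D \in EF$'', so a generic point of it works — though an explicit point is more convincing and I would aim to supply one.
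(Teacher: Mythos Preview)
Your plan is correct and, once the one loose end you flag is tied off, it is exactly the paper's argument in projectivity dress. The paper multiplies the nine Menelaus relations (one per step $P_i\mapsto P_{i+1}$, in triangles $AOB,\,BOC,\,COA,\,AOB,\dots$) to obtain
\[
\left(\left[\tfrac{AD}{BD}\right]\left[\tfrac{BE}{CE}\right]\left[\tfrac{CF}{AF}\right]\right)^{3}=1;
\]
over $\mathbb{R}$ the unique real cube root gives the bracketed product equal to $1$, hence $D\in EF$ by Menelaus, while over $\mathbb{C}$ it sets each bracket to a primitive ninth root $\tau$ (so the product is $\tau^3\neq 1$) and prescribes $[OP_i/(\text{vertex})P_i]=\tau^{\,i-1}$ to write the counterexample down explicitly. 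Your projectivity $\varphi$ is precisely multiplication by $\mu=[AD/BD][BE/CE][CF/AF]$ in the coordinate $[OP/AP]$ on the line $OA$, so your Route~1 and the paper's computation are the same statement.

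Your only stated uncertainty --- whether $A$ is fixed by $\varphi$ --- dissolves in one line: since $D\in AB$, $E\in BC$, $F\in CA$, the three perspectivities send $A\mapsto DA\cap OB=B$, then $B\mapsto EB\cap OC=C$, then $C\mapsto FC\cap OA=A$. Thus $O$ and $A$ are both fixed, $\varphi$ is diagonal with ratio $\mu$, and all three items in your ``main obstacle'' are handled by this Menelaus identification; no irreducibility argument or delicate coordinate search is needed.
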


\begin{proof}
Let us prove the incidence theorem over $\mathbb{R}$. The tiling in Figure~\ref{fig-9-gon} to the right explains the intuition beyond this proof. Write Menelaus's theorem for the triple of points $P_1,P_2,D$ on the 
\bluenew{sides} of the triangle $AOB$, the triple $P_2,P_3,E$ on $BOC$, etc. Multiplying the resulting $9$ equations and canceling common factors, we get 
\begin{equation}\label{eq-cube}
        \left(\left[\frac{AD}{BD}\right]\cdot
        \left[\frac{BE}{CE}\right]\cdot
        \left[\frac{CF}{AF}\right]\right)^3=1.
\end{equation}
Since the equation $x^3=1$ is equivalent to $x=1$ over $\mathbb{R}$, the cube can be removed in~\eqref{eq-cube}, and by Menelaus's theorem again, we conclude that $D,E,F$ are collinear.


Let us disprove the incidence theorem over $\mathbb{C}$. The same tiling suggests a counterexample.
Let $\tau\in\mathbb{C}$ be a primitive degree $9$ root of unity so that $\tau^9=1$ whereas $\tau^3\ne 1$. Given $A,B,C,O$ (no three being collinear), choose points $D,E,F,P_1,\dots,P_9$ so that
$$
\left[\frac{AD}{BD}\right]=\left[\frac{BE}{CE}\right]=\left[\frac{CF}{AF}\right]=\tau,
\left[\frac{OP_1}{AP_1}\right]=1, \left[\frac{OP_2}{BP_2}\right]=\tau, \left[\frac{OP_3}{CP_3}\right]=\tau^2 \text{ etc.}
$$
By Menelaus's theorem, all the assumptions of Example~\ref{ex-9-gon}
are satisfied
but the conclusion does not hold.
So, the 
theorem is false over $\mathbb{C}$, thus has no surface-tiling proof by Theorem~\ref{th-master-theorem}.
\end{proof}

\subsection{Simplicial-complex proofs}

This example suggests using topological spaces more general than surfaces in tiling proofs of incidence theorems over $\mathbb{R}$ (and other non-algebraically closed fields). The underlying property of those topological spaces 
is well-known in topology; 
see Remarks~\ref{rem-excision} and~\ref{rem-gauge} below.

\begin{definition}[Excision] \label{def-excised} Denote by $\mathbb{F}^*$ the multiplicative group of the field $\mathbb{F}$. For a finite two-dimensional simplicial complex, denote by $\vec E$ the set of its oriented edges, by 
$ab$ the edge oriented from $a$ to $b$, and by $abc$ the (non-oriented) face with the vertices $a,b,c$. The \emph{open face} is a face $abc$ without its boundary; it is denoted by $\mathrm{Int}\,abc$ or just $abc$ (if no confusion arises).

An open face $a_0b_0c_0$ 
\emph{can be excised} over $\mathbb{F}^*$ if for any function $U\colon \vec E\to \mathbb{F}^*$, the two properties
\begin{itemize}
    \item[(E)] for any oriented edge $ab$, we have $U(ab)=U(ba)^{-1}$; and
    \item[(F)] for any face $abc\ne a_0b_0c_0$, we have $U(ab)U(bc)U(ca)=1$;
\end{itemize}
imply that $U(a_0b_0)U(b_0c_0)U(c_0a_0)=1$.  
\end{definition}

\begin{remark} \label{rem-surface}
Any open face of any triangulated closed orientable surface can be excised over any commutative group. 
\end{remark}

\begin{proof}
Multiplying the equations from condition~(F) for all faces $abc\ne a_0b_0c_0$ (
with the vertices listed as prescribed by 
the surface orientation) and canceling common factors using
~(E), we get $U(a_0b_0)U(b_0c_0)U(c_0a_0)=1$. (This is 
what happened in the proof of Lemma~\ref{l-elementary-lemma}.) 
\end{proof}



Another example is obtained from the 9-gon and the triangle $ABC$ in Figure~\ref{fig-9-gon} to the right by gluing their boundaries along the obvious 3--1 map. (To obtain a genuine simplicial complex, we also need to subdivide the resulting $\Delta$-complex.) The above proof of Proposition~\ref{p-9-gon} was nothing but showing that the (open) face $ABC$ can be excised over $\mathbb{R}^*$ but not $\mathbb{C}^*$.
We observe that the possibility of excision depends on the underlying multiplicative group.


\begin{remark}\label{rem-subgroup}
If a face can be excised over a group then it can be excised over any its subgroup.
\end{remark}

Now we define an elementary simplicial-complex proof over $\mathbb{F}$ analogously to an elementary surface-tiling proof (see Definition~\ref{def-elementary-surface-tiling-proof}), only the 
surface is replaced with an arbitrary simplicial complex such that the 
marked face can be excised over $\mathbb{F}^*$. 

\begin{definition}[Simplicial-complex proof] \label{def-simplicial-complex-proof}
Consider an incidence theorem with an $m\times n$ matrix $M$ and 
a field $\mathbb{F}$. 
A \emph{labeled simplicial complex} is a finite two-dimensional simplicial complex equipped with two maps
    $$
    p\colon V\sqcup E\to \{1,\dots,m\}
    \qquad\text{and}\qquad
    l\colon F\sqcup E\to \{1,\dots,n\},
    $$
    where $V$, $E$, 
    $F$ are the sets of vertices, edges, 
    faces respectively.
An \emph{elementary simplicial-complex proof over $\mathbb{F}$} of the theorem is a labeled simplicial complex
 with the properties:
    \begin{enumerate}
        \item[(0)] there is a unique pair $i\in E,j\in F$ such that $i\subset j$ and $p(i)=l(j)=1$;
        \item[(*)] in this pair, the (open) face $j$ can be excised over $\mathbb{F}^*$;
        \item[($+1$)] for any other pair $i\!\in\! E, j\!\in\! F$ or $i\!\in\! V\sqcup E,j\!\in\! E$
        such that $i\subset j$ we have $M_{p(i)l(j)}=1$;
        \item[($-1$)] for any $i\!\in\! V\sqcup E,j\!\in\! E$  contained in one face and such that $i\not\subset j$, we have $M_{p(i)l(j)}=-1$.
    \end{enumerate}

A \emph{simplicial-complex proof over $\mathbb{F}$} is then defined analogously to \emph{surface-tiling proof}.

A labeled simplicial complex is \emph{bijectively labeled} if 
$p$ and $l$ are bijections. The \emph{incidence theorem generated by a bijectively labeled simplicial complex satisfying property}~(0) is the incidence theorem with the matrix $M$ determined by properties~($+1$) and~($-1$), and having zeroes at all the other entries. The (open) face $j$ from property~(0) is called the \emph{marked face}.
\end{definition}

%
%

\begin{remark} \label{rem-bijective} In an elementary simplicial-complex proof, one can always replace the maps $p$ and $l$ with bijections; this leads to a simplicial-complex proof of a more general theorem. 
\end{remark}

We get the following analog of Lemma~\ref{l-elementary-lemma}, this time, 
a necessary and sufficient condition.



\begin{proposition}[Elementary lemma over a given field] \label{p-excision}
The incidence theorem generated by a bijectively labeled simplicial complex satisfying property~(0) is true over an infinite field if and only if the (open) marked face 
can be excised over the multiplicative group of the field.
\end{proposition}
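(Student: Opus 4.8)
The plan is to run the argument of Lemma~\ref{l-elementary-lemma} in both directions, replacing the "multiply over all faces but the marked one" step of Remark~\ref{rem-surface} with the hypothesis that the marked face can (respectively cannot) be excised. First I would set up the dictionary between a configuration of points and lines realizing the incidence matrix $M$ and a function $U\colon\vec E\to\mathbb F^*$ on the oriented edges of the simplicial complex. Given points $P_1,\dots,P_m$ and lines $L_1,\dots,L_n$ with incidence matrix $M$, we may assume (passing to an infinite extension if needed, by Remark~\ref{rem-extension}, and applying a projective transformation) that all the $P_i$ lie in the affine plane $\mathbb F^2$. Exactly as in the proof of Lemma~\ref{l-elementary-lemma}, property~($+1$) forces $P_{p(a)},P_{p(b)},P_{p(c)}$ to be collinear for each face $abc$, property~($-1$) forces them to be pairwise distinct and forces $P_{p(ab)}$ to lie on the line through them but distinct from the endpoints, so each face is a genuine Menelaus triangle with a marked collinear point on each side. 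Then one sets $U(ab):=[P_{p(a)}P_{p(ab)}/P_{p(b)}P_{p(ab)}]\in\mathbb F^*$; condition~(E) holds because swapping the roles of $a$ and $b$ inverts this ratio, and Menelaus's theorem gives condition~(F) for every face $abc$ other than the marked face $j$ (because there the relevant matrix entry is $0$, not forced to $1$). Conversely, given $U$ satisfying (E) and (F) for all faces but $j$, one builds a configuration realizing $M$ by choosing the affine points inductively along the complex so that the prescribed cross-ratios hold: pick the three vertices of one face in general position, then propagate across shared edges using that $U$ determines each new marked point and Menelaus's theorem determines the third vertex of each new triangle; since $p$ and $l$ are bijections this assigns each $P_i$ and $L_j$ consistently, and properties~($\pm1$) guarantee the incidence matrix is exactly $M$. (One must check this propagation is well-defined, i.e. independent of the path along which a vertex is reached; this follows from condition~(F) holding on the intermediate faces, which is the usual cocycle/consistency argument.)

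With this correspondence in place, both implications are short. If the marked face can be excised over $\mathbb F^*$: take any configuration with incidence matrix $M$, form $U$ as above, note (E) and (F)-for-$j'\neq j$ hold, so excision gives $U(a_0b_0)U(b_0c_0)U(c_0a_0)=1$ for the marked face $j=a_0b_0c_0$; by the converse direction of Menelaus's theorem the three marked points on the sides of the marked triangle are collinear, and since by property~(0) one of them is $P_1$ and the other two lie on $L_1=L_{l(j)}$ and are distinct from $P_1$ and each other, we conclude $P_1\in L_1$. Thus the incidence theorem is true. If the marked face \emph{cannot} be excised over $\mathbb F^*$: by Definition~\ref{def-excised} there is a function $U$ satisfying (E) and (F) for all faces but $j$ with $U(a_0b_0)U(b_0c_0)U(c_0a_0)\neq 1$. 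Feed this $U$ into the converse construction to obtain a configuration of points and lines realizing $M$; by Menelaus's theorem the three marked points on the sides of the marked triangle are then \emph{not} collinear, so $P_1\notin L_1$. This is a counterexample, so the incidence theorem is false.

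\textbf{Main obstacle.} The genuinely delicate step is the converse construction: realizing an abstract edge-labeling $U$ by an actual point configuration and verifying that the construction is path-independent and produces \emph{exactly} the incidence matrix $M$ (with the right $\pm1$ entries, not more incidences than prescribed). The path-independence is a cocycle condition that is automatic from (F) on the faces one crosses, but one must be careful at the marked face, where (F) fails — one needs the combinatorial structure of the complex (that removing the marked open face still leaves the rest "simply connected enough" for the labeling to be consistent, which is precisely what excisability of $j$ is \emph{not} asserting in general). In fact the cleanest route is probably to first choose the affine points freely subject to the finitely many collinearity constraints coming from faces $\neq j$ (a system one can always solve over an infinite field, choosing generically so that no unintended incidences occur, using that $\mathbb F$ is infinite), then observe that on such a configuration the induced ratios $U'$ automatically satisfy (F) for faces $\neq j$, and finally adjust so that $U'=U$; alternatively, realize $U$ directly as in the sketch above. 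Either way, the bookkeeping to guarantee "no extra incidences" — so that the constructed configuration has incidence matrix exactly $M$ and hence is a legitimate instance of the hypothesis — is where the real work lies; everything else is the Menelaus computation already carried out in Lemma~\ref{l-elementary-lemma} together with Remark~\ref{rem-bijective}.
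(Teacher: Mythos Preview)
Your forward direction is essentially the paper's argument (though note the slip: property~($+1$) makes $P_{p(a)},P_{p(b)}$ collinear with $P_{p(ab)}$ along $L_{l(ab)}$, while property~($-1$) makes the three \emph{vertex} points $P_{p(a)},P_{p(b)},P_{p(c)}$ \emph{non}-collinear --- they form a triangle, not a line).

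The real divergence is in the converse. Your ``propagate face by face'' scheme is both more complicated than necessary and not quite coherent as stated: Menelaus does not determine the third \emph{vertex} of a new triangle from a shared edge and the values of $U$ --- it relates the three edge-midpoints once all three vertices are already placed. So the inductive step you describe does not actually pin down the next vertex, and you are then left worrying (correctly) about path-independence around cycles of the complex, which for a general $2$-complex you have no control over.

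The paper sidesteps all of this with a one-line observation you are missing: because $p$ is a \emph{bijection}, the vertex points $P_{p(a)}$ for $a\in V$ are independent variables. Over an infinite field one simply places them in general position (no three collinear). Then for each edge $ab$ set $L_{l(ab)}:=P_{p(a)}P_{p(b)}$ and choose $P_{p(ab)}$ on it with the prescribed ratio $U(ab)$; for each non-marked face $abc$, condition~(F) and Menelaus make $P_{p(ab)},P_{p(bc)},P_{p(ca)}$ collinear, so $L_{l(abc)}$ is that line; and $L_1:=P_{p(b_0c_0)}P_{p(c_0a_0)}$. Everything is local, there is no propagation, and the \mbox{($-1$)}-entries of $M$ (which only concern edge-lines, since $j\in E$ in property~($-1$)) are automatic from the general-position choice. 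One detail you also omit: to guarantee that the midpoint $P_{p(ab)}$ exists as an \emph{affine} point one needs $U(ab)\ne 1$; the paper arranges this up front by the gauge transformation $U(ab)\mapsto g(a)U(ab)g(b)^{-1}$ for a suitable $g\colon V\to\mathbb F^*$, which preserves (E), (F), and the product around $\partial j$.
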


\begin{proof} This is analogous to Proposition~\ref{p-9-gon}.
Take a bijectively labeled simplicial complex having property~(0) and an infinite field $\mathbb{F}$. Let $M$ be the matrix determined by properties~($+1$) and~($-1$), and having
zeroes at all 
the other entries. Let $i=a_0b_0$ and $j=a_0b_0c_0$ be the pair from property~(0).

First, assume that the (open) marked face $j$ can be excised over $\mathbb{F}^*$. 
Take an arbitrary sequence of points $P_1,\dots,P_m\in P^2$ and lines $L_1,\dots,L_n\in P^{2*}$ having incidence matrix $M$. 
We may assume that $P_1,\dots,P_m$ lie in the affine plane $\mathbb{F}^2$.
Analogously to the proof of Lemma~\ref{l-elementary-lemma}, by properties~($+1$) and~($-1$) and Menelaus's theorem, we get
    \begin{equation*}
        \left[\frac{P_{p(a)}P_{p(ab)}}{P_{p(b)}P_{p(ab)}}\right]\cdot
        \left[\frac{P_{p(b)}P_{p(bc)}}{P_{p(c)}P_{p(bc)}}\right]\cdot
        \left[\frac{P_{p(c)}P_{p(ca)}}{P_{p(a)}P_{p(ca)}}\right]=1
    \end{equation*}
    for all faces $abc\ne j$. Define a function $U\colon\vec E\to \mathbb{F}^*$ by the formula $U(ab):=\left[{P_{p(a)}P_{p(ab)}}/{P_{p(b)}P_{p(ab)}}\right]$. Then conditions~(E) and~(F) of Definition~\ref{def-excised} hold. Since $j$ can be excised over $\mathbb{F}^*$, we get the same equation for the face~$j$. By Menelaus's theorem, the points 
    $P_{p(a_0b_0)}$, $P_{p(b_0c_0)}$, $P_{p(c_0a_0)}$ 
    are collinear. Thus $P_1\in L_1$, and the incidence theorem with the matrix $M$ is true over $\mathbb{F}$.

Now assume that the 
marked face $j$ \emph{cannot} be excised over $\mathbb{F}^*$. Then there is a function $U\colon\vec E\to \mathbb{F}^*$ satisfying 
conditions~(E) and~(F) of Definition~\ref{def-excised} such that $U(a_0b_0)U(b_0c_0)U(c_0a_0)\ne 1$. We may assume that $U(ab)\ne 1$ for each $ab\in\vec E$, otherwise perform the transformation $U(ab)\mapsto g(a)U(ab)g(b)^{-1}$ for a suitable function $g\colon V\to \mathbb{F}^*$.

Construct a counterexample to the incidence theorem with the matrix $M$ as follows. For each vertex $a\in V$, take a point $P_{p(a)}\in\mathbb{F}^2$ so that no three of them are collinear. This is possible because $\mathbb{F}$ is infinite and $p\colon V\sqcup E\to\{1,\dots,m\}$ is a bijection. For each edge $ab\in E$, set $L_{l(ab)}=P_{p(a)}P_{p(b)}$ and take 
$P_{p(ab)}\in L_{l(ab)}$ such that $[P_{p(a)}P_{p(ab)}/P_{p(b)}P_{p(ab)}]=U(ab)$. 
For each face $abc\ne a_0b_0c_0$, points $P_{p(ab)}$, $P_{p(bc)}$, and $P_{p(ca)}$ belong to one line by Menelaus's theorem and condition~(F); let this line be $L_{l(abc)}$. Finally, 
set
$L_{1}=P_{p(b_0c_0)}P_{p(c_0a_0)}$. Then $P_1\notin L_1$ by Menelaus's theorem
and condition $U(a_0b_0)U(b_0c_0)U(c_0a_0)\ne 1$. However, the constructed sequence of points and lines has incidence matrix $M$. Thus the incidence theorem is false.
\end{proof}

Proposition~\ref{p-excision} remains true over a \emph{finite} field if $p(V)$ has at most $4$ elements because then there is enough space to choose the points  $P_{p(a)}\in P^2$, $a\in V$, so that no three of them are collinear, and the same argument works ($U(ab)=1$ is now allowed and $P_{p(ab)}$ is 
the improper point of the line $P_{p(a)}P_{p(b)}$ in this case). As a direct consequence, we get the following result.   

\begin{proposition} \label{p-3n-gon} The incidence theorem in Example~\ref{ex-9-gon} is true over a field if and only if the polynomial $x^k-1$ has a unique root $x=1$ in the field. 
In particular, in the case $k=3$, the theorem is true over the field with $2$ or $8$ elements but false over the field with $4$ elements.  
\end{proposition}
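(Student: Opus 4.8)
\textbf{Proof proposal for Proposition~\ref{p-3n-gon}.}
The plan is to reduce the statement to Proposition~\ref{p-excision}, applied to the bijectively labeled simplicial complex obtained from the $3k$-gon together with the triangle $ABC$ by gluing their boundaries along the natural $3$-to-$1$ map, as described after Remark~\ref{rem-surface} (for $k=3$ this is exactly the picture in Figure~\ref{fig-9-gon} to the right; subdivide once to get a genuine simplicial complex). First I would check that this labeled simplicial complex indeed generates (a mild generalization of) the incidence theorem in Example~\ref{ex-9-gon}: the $3k$ triangles of the $3k$-gon correspond to the $3k$ Menelaus triples $P_1P_2D$, $P_2P_3E$, $\dots$ used in the proof of Proposition~\ref{p-9-gon}, the boundary identification forces the labels $O,A,B,C,D,E,F$ and $P_1$ on the marked face $ABC$, and properties $(+1)$ and $(-1)$ reconstruct the matrix $M$ of Example~\ref{ex-9-gon}. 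One must invoke Remark~\ref{rem-bijective} to pass between the general theorem generated by the bijective labeling and the specific Example~\ref{ex-9-gon} (which is the special case where the non-adjacent points are allowed to coincide); this only strengthens the implication in the ``if'' direction and is harmless in the ``only if'' direction because the counterexample produced by Proposition~\ref{p-excision} can be taken with all the relevant points distinct.

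Next I would compute the excision condition for the marked face $ABC$ of this complex over an arbitrary multiplicative group $\mathbb{F}^*$. Assigning $U$ the values $[AD/BD]$, $[BE/CE]$, $[CF/AF]$, and the ratios $[OP_i/\cdot]$ to the edges, condition (F) over all $3k$ triangles and cancellation of the interior edges $OP_1,\dots,OP_{3k}$ exactly yields, as in \eqref{eq-cube},
$$U(AB)U(BC)U(CA)=\left(\left[\frac{AD}{BD}\right]\left[\frac{BE}{CE}\right]\left[\frac{CF}{AF}\right]\right)^{k}.$$
Hence the open face $ABC$ can be excised over $\mathbb{F}^*$ precisely when $g^k=1$ forces $g=1$ for every $g\in\mathbb{F}^*$, i.e. when $x^k-1$ has $x=1$ as its only root in $\mathbb{F}$. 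Then Proposition~\ref{p-excision} gives the equivalence for every infinite field $\mathbb{F}$. For the finite-field case, the labeling $p$ uses only the vertices $O,A,B,C$ (the $P_i$ and $D,E,F$ sit on edges), so $p(V)$ has exactly $4$ elements and the remark following Proposition~\ref{p-excision} applies verbatim, extending the equivalence to all fields.

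Finally, the case $k=3$ is a direct specialization: $x^3-1=(x-1)(x^2+x+1)$, and $x^2+x+1$ has no root $\neq 1$ in $\mathbb{F}$ iff $\mathbb{F}$ contains no primitive cube root of unity, which fails over $\mathbb{F}_4$ (whose multiplicative group has order $3$) but holds over $\mathbb{F}_2$ and over $\mathbb{F}_8$ (whose multiplicative group has order $7$, coprime to $3$). I expect the main obstacle to be the bookkeeping in the first step: verifying carefully that the glued $\Delta$-complex, after the subdivision needed to make it simplicial, still satisfies properties (0), $(+1)$, $(-1)$ with the intended labels and that no spurious incidences are forced on the marked face, so that the generated theorem really is Example~\ref{ex-9-gon} and not something strictly stronger or weaker. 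Everything else is a short computation with the group-valued cocycle $U$.
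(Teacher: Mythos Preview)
Your approach is essentially the same as the paper's: the paper states Proposition~\ref{p-3n-gon} as ``a direct consequence'' of Proposition~\ref{p-excision} together with the remark that the proposition extends to finite fields when $|p(V)|\le 4$, applied to the $3k$-gon-plus-triangle $\Delta$-complex of Figure~\ref{fig-9-gon}. You have correctly identified all of these ingredients, including the crucial observation that the vertex set is $\{O,A,B,C\}$ so the finite-field case goes through.

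One slip to fix: your displayed equation is garbled. Multiplying condition~(F) over the $3k$ non-marked faces and cancelling the interior edges gives
\[
\bigl(U(AB)\,U(BC)\,U(CA)\bigr)^{k}=1,
\]
which is exactly~\eqref{eq-cube}; what you wrote, $U(AB)U(BC)U(CA)=(\cdots)^{k}$, would read $g=g^{k}$ after your own identification $U(AB)=[AD/BD]$ etc. Your next sentence (``Hence the open face $ABC$ can be excised over $\mathbb{F}^*$ precisely when $g^k=1$ forces $g=1$'') is correct and shows you have the right picture, so this is a writing error, not a gap. You should also remark, for the ``only if'' direction of the excision computation, that any $g\in\mathbb{F}^*$ with $g^k=1$ is actually attained as $U(AB)U(BC)U(CA)$ for some $U$ satisfying (E) and (F) on the $3k$ faces (set $U(AB)=g$, $U(BC)=U(CA)=1$, and solve for the spoke values $U(OP_i)$ as successive powers of $g$); otherwise you have only shown one implication.
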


The 
latter holds because the multiplicative group of a finite field with $q$ elements is cyclic of order $q-1$, hence $x^k-1$ has a unique root $x=1$ if and only if $q-1$ and $k$ are coprime. Recall that $\mathbb{F}_4$ is not a subfield of $\mathbb{F}_8$, so that there is no contradiction to Remark~\ref{rem-extension}.

We are ready to state the Master Theorem over an arbitrary given infinite field. It 
follows directly from Remark~\ref{rem-bijective} and Proposition~\ref{p-excision}. 

\begin{theorem}[Master theorem over a given field] \label{th-master-theorem-general}
If an incidence theorem with some matrix 
has a simplicial-complex proof over an infinite field, then it is true over the field.
\end{theorem}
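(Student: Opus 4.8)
\textbf{Proof proposal for Theorem~\ref{th-master-theorem-general}.}

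The plan is to reduce the general case to the two ingredients already in hand: Remark~\ref{rem-bijective}, which lets us pass from an arbitrary elementary simplicial-complex proof to one with bijective labels at the cost of proving a more general theorem, and Proposition~\ref{p-excision}, which gives the exact truth criterion for theorems generated by bijectively labeled complexes over an infinite field. The theorem then follows by bootstrapping through the layers of the definition of a ``simplicial-complex proof over $\mathbb{F}$'', exactly as the Master Theorem~\ref{th-master-theorem} follows from Lemma~\ref{l-elementary-lemma}.

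First I would unwind the hierarchy of definitions. A simplicial-complex proof over $\mathbb{F}$ consists of a sequence of auxiliary constructions $M_0,\dots,M_k$ followed by a ``proof with case distinctions'' for the incidence theorem with matrix $M_k$; this in turn is a collection of ``proofs by contradiction'' for each non-tautological case matrix $M_l$ (zeros filled by $\pm1$) that does not contradict the incidence axiom; each of those is an elementary simplicial-complex proof over $\mathbb{F}$ of the relabeled, vacuous matrix. So the single genuinely new step is: an elementary simplicial-complex proof over $\mathbb{F}$ implies the corresponding incidence theorem is true over $\mathbb{F}$. For this, given a labeled simplicial complex with maps $p,l$ satisfying properties~(0), (*), ($+1$), ($-1$), I would invoke Remark~\ref{rem-bijective} to replace $p,l$ by bijections $\tilde p,\tilde l$ onto $\{1,\dots,\tilde m\}$ and $\{1,\dots,\tilde n\}$, obtaining a bijectively labeled complex that still satisfies~(0) and whose marked face can still be excised over $\mathbb{F}^*$ by property~(*). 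The incidence theorem it generates has some matrix $\tilde M$, and our original matrix $M$ is a submatrix of $\tilde M$ up to identifying rows/columns (certain points and lines coinciding) — so the original theorem follows from the generated one by the elementary logical reduction described in Section~\ref{sec:preliminaries}. By Proposition~\ref{p-excision}, since the marked face can be excised over $\mathbb{F}^*$ and $\mathbb{F}$ is infinite, the generated theorem is true over $\mathbb{F}$; hence so is the original.

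The remaining steps are formal and identical to those underlying Theorem~\ref{th-master-theorem}: a ``proof by contradiction'' establishes that the relabeled matrix (with $M_{11}$ set to $-1$) gives a vacuous theorem, i.e.\ the original incidence $P_i\in L_j$ holds whenever $P_1\notin L_1$; a ``proof with case distinction'' combines these over all sign-fillings, using that the excluded cases are either tautologies ($M_{11}=1$) or contradict the incidence axiom (which is true over any field); and auxiliary constructions are always legitimate over an infinite field, since one can adjoin intersection points, connecting lines, a point off finitely many given lines, or a line off finitely many given points. Chaining these, the truth of the theorem with matrix $M_k$ over $\mathbb{F}$ propagates back to the truth of the theorem with matrix $M_0=M$ over $\mathbb{F}$.

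I expect the main obstacle to be purely bookkeeping rather than mathematical: carefully checking that property~(*) (excisability of the marked face) is preserved under the bijectivization of Remark~\ref{rem-bijective} — the simplicial complex itself is unchanged, so the marked face and the group $\mathbb{F}^*$ are unchanged, and excisability is a property of the complex and the group alone, not of the labeling, so this is in fact immediate. The only subtlety worth spelling out is that across different cases in a case distinction one may use elementary simplicial-complex proofs over $\mathbb{F}$ built on \emph{different} complexes with \emph{different} marked faces, each of which must independently be excisable over $\mathbb{F}^*$; this is exactly what Definition~\ref{def-simplicial-complex-proof} requires, so no extra argument is needed. Since Remark~\ref{rem-bijective} and Proposition~\ref{p-excision} do all the real work, the proof is short: ``This follows directly from Remark~\ref{rem-bijective} and Proposition~\ref{p-excision}, arguing exactly as Theorem~\ref{th-master-theorem} follows from Lemma~\ref{l-elementary-lemma}.''
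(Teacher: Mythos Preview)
Your proposal is correct and follows exactly the paper's approach: the paper's entire proof is the sentence ``It follows directly from Remark~\ref{rem-bijective} and Proposition~\ref{p-excision},'' and your argument spells out precisely the bookkeeping this sentence encapsulates, including the reduction through auxiliary constructions, case distinction, and proof by contradiction parallel to Theorem~\ref{th-master-theorem}.
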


By Remark~\ref{rem-subgroup}, we get the following corollary.

\begin{corollary}[Passing to a subgroup] \label{cor-passing-subgroup} If an incidence theorem with some matrix 
has a simplicial-complex proof over an infinite field $\mathbb{F}$, then it is true over any 
infinite field $\Bbbk$ such that the group $\Bbbk^*$ embeds into $\mathbb{F}^*$.
\end{corollary}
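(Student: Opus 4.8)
The plan is to observe that the only place where the notion of a simplicial-complex proof over a field depends on that field is the excision requirement~(*) in Definition~\ref{def-simplicial-complex-proof}, and that this requirement is inherited when one passes from $\mathbb{F}^*$ to any group that embeds into it. Everything else---the incidence matrix $M$, the auxiliary constructions $M_0,\dots,M_k$, the case distinction into the matrices $M_1,\dots,M_{2^k}$, the combinatorial properties~(0), ($+1$), ($-1$) of the labeled simplicial complexes, the notion of tautology, and the reduction via contradiction to the incidence axiom---refers only to the incidence matrix and the simplicial complex, not to the ground field. (The auxiliary-construction steps do use that the field is infinite, which $\Bbbk$ is by hypothesis.)

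Concretely, I would argue as follows. Suppose the incidence theorem with matrix $M$ has a simplicial-complex proof over $\mathbb{F}$. Unwinding Definition~\ref{def-simplicial-complex-proof}, together with the case-distinction and auxiliary-construction definitions that the simplicial-complex versions copy verbatim, this proof consists of a finite sequence of auxiliary constructions $M_1,\dots,M_k$ and, for each of the finitely many matrices among $M_1,\dots,M_{2^k}$ that is neither a tautology nor a contradiction to the incidence axiom, an elementary simplicial-complex proof over $\mathbb{F}$ of the corresponding vacuous incidence theorem. Fix one such elementary simplicial-complex proof; it is a labeled simplicial complex with a marked face $j$ satisfying property~(*), i.e.\ $j$ can be excised over $\mathbb{F}^*$. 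Since $\Bbbk^*$ embeds into $\mathbb{F}^*$, it is isomorphic to a subgroup $H\le\mathbb{F}^*$; by Remark~\ref{rem-subgroup}, $j$ can be excised over $H$, and since Definition~\ref{def-excised} involves only the abstract group structure, $j$ can be excised over $\Bbbk^*$ as well. Hence properties~(0), (*), ($+1$), ($-1$) all hold for the same labeled simplicial complex over $\Bbbk$, so it is an elementary simplicial-complex proof over $\Bbbk$. Since the auxiliary constructions are equally valid over the infinite field $\Bbbk$, the whole collection is a simplicial-complex proof over $\Bbbk$. Theorem~\ref{th-master-theorem-general} then yields that the incidence theorem is true over $\Bbbk$.

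There is essentially no hard step here: the statement is a formal consequence of the definitions and Remark~\ref{rem-subgroup}. The only points requiring a word of care are (i) that ``can be excised over $\mathbb{F}^*$'' is an isomorphism-invariant property of the pair (simplicial complex, abstract group), so that the hypothesis ``$\Bbbk^*$ embeds into $\mathbb{F}^*$'' is exactly what is needed, rather than a literal inclusion $\Bbbk^*\subseteq\mathbb{F}^*$; and (ii) that the auxiliary-construction steps transfer to $\Bbbk$ precisely because $\Bbbk$ is infinite, so the infiniteness hypothesis on $\Bbbk$ cannot be dropped.
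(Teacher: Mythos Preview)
Your argument is correct and is exactly the approach the paper takes: the corollary is stated as an immediate consequence of Remark~\ref{rem-subgroup}, and you have simply unpacked that one-line justification in full detail. There is nothing to add.
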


The introduced Master theorem over $\mathbb{R}$ generates many more incidence theorems than the Master Theorem~\ref{th-master-theorem} known before. However, for that, we need to generate simplicial complexes with excision property (*) over~$\mathbb{R}$. This is addressed in Section~\ref{ssec:gropes}.

\begin{remark}\label{rem-excision} Let us comment on the relation of Definition~\ref{def-excised} to the common terminology in topology \cite{Hatcher}. There, given a pair $(X,A)$ of topological spaces and a group $G$, we say that a subspace $B\subset A$ \emph{can be excised}, if the inclusion $i\colon B\to A$ induces an isomorphism $i^*\colon H^n(X,A;G)\cong H^n(X-B,A-B;G) $ of (singular) relative cohomology groups for all $n$. One of the Eilenberg--Steenrod axioms of cohomology theory states that a subspace $B$ whose closure is contained in the interior of $A$ can always be excised. We are interested in the case when $A=B$, 
hence get a nontrivial restriction on the pair $(X,A)$.

Namely, in the 
case when $X$ is a 2-dimensional simplicial complex, $B=A=\mathrm{Int}\,a_0b_0c_0$ is an open face, 
$G=\mathbb{F}^*$,  
this condition reduces to $i^*\colon H^1(X, \mathrm{Int}\,a_0b_0c_0; \mathbb{F}^*)\cong H^1(X-\mathrm{Int}\,a_0b_0c_0; \mathbb{F}^*)$. Computing $H^1(X, \mathrm{Int}\,a_0b_0c_0; \mathbb{F}^*)\cong H^1(X\cup C\mathrm{Int}\,a_0b_0c_0; \mathbb{F}^*)\cong H^1(X; \mathbb{F}^*) $ using the excision property and a deformation retraction of $X\cup C\mathrm{Int}\,a_0b_0c_0$ to $X$, we further simplify this condition to $i^*\colon H^1(X; \mathbb{F}^*)\cong H^1(X-\mathrm{Int}\,a_0b_0c_0; \mathbb{F}^*) $ (where the injectivity is automatic).

This is exactly the condition in Definition~\ref{def-excised} restated in terms of simplicial cohomology. Indeed, condition~(E) means that $U\colon\vec E\to\mathbb{F}^*$ is a simplicial cochain, and condition~(F) means that its coboundary in $X-\mathrm{Int}\,a_0b_0c_0$ vanishes. Thus, $U$ is a cocycle in $X-\mathrm{Int}\,a_0b_0c_0$, which represents some cohomology class $[U]\in H^1(X-\mathrm{Int}\,a_0b_0c_0; \mathbb{F}^*)$. Consider the map $i^*\colon H^1(X; \mathbb{F}^*)\to H^1(X-\mathrm{Int}\,a_0b_0c_0; \mathbb{F}^*) $ induced by the inclusion $i\colon X-\mathrm{Int}\,a_0b_0c_0\to X$. The class $[U]$ lies in the image of $i^*$ 
if and only if $U\colon\vec E\to\mathbb{F}^*$ is also a cocycle in $X$, i.e., satisfies the additional condition $U(a_0b_0)U(b_0c_0)U(c_0a_0)=1$. This is exactly what Definition~\ref{def-excised} says.
\end{remark}




\begin{remark}
\label{rem-Delta-complex}
    All the results of this subsection remain true if one replaces a simplicial complex with a $\Delta$-complex (that is, if one does not require the intersection of distinct simplices to be a single simplex). This is proved analogously to Remark~\ref {rem-triangulation} using an octahedral subdivision.
\end{remark}


\subsection{Untilable theorems}

Let us show that Fano's axiom remains unprovable even using simplicial complexes. 

\begin{proposition}\label{p-Fano-simplicial} The incidence theorem in Example~\ref{ex-Fano} is true over both $\mathbb{C}$ and $\mathbb{R}$ but has no simplicial-complex proof over neither $\mathbb{C}$ nor $\mathbb{R}$.
\end{proposition}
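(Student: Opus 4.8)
The statement has two independent halves: the truth of the incidence theorem over $\mathbb{C}$ and $\mathbb{R}$, and the non-existence of a simplicial-complex proof over either field. The first half is already essentially done in Proposition~\ref{p-Fano}: the cross-ratio argument shows that the Fano axiom holds over any field of characteristic $\ne 2$, in particular over $\mathbb{C}$ and $\mathbb{R}$. So the real content is the second half. My plan is to use the same strategy as in Proposition~\ref{p-Fano}: if there were a simplicial-complex proof over $\mathbb{F}\in\{\mathbb{C},\mathbb{R}\}$, then by the Master Theorem over a given field (Theorem~\ref{th-master-theorem-general}) the incidence theorem would be true over $\mathbb{F}$, which it is — so that alone gives no contradiction. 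The point is subtler: a simplicial-complex proof over $\mathbb{F}$ forces truth not only over $\mathbb{F}$ but over a whole family of fields, and we must find one where the Fano axiom fails, namely a field of characteristic $2$.

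The key tool is Corollary~\ref{cor-passing-subgroup}: a simplicial-complex proof over an infinite field $\mathbb{F}$ yields truth over any infinite field $\Bbbk$ with $\Bbbk^*\hookrightarrow\mathbb{F}^*$. So I would argue as follows. Suppose Example~\ref{ex-Fano} had a simplicial-complex proof over $\mathbb{C}$. The multiplicative group $\mathbb{C}^*$ is divisible and contains a copy of every finitely generated abelian group of the appropriate shape; in particular, I claim $\Bbbk^*$ embeds into $\mathbb{C}^*$ for a suitable infinite field $\Bbbk$ of characteristic $2$ — for instance $\Bbbk=\overline{\mathbb{F}_2}$, whose multiplicative group is the group of all roots of unity of odd order, which visibly embeds into the roots of unity inside $\mathbb{C}^*$. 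Then by Corollary~\ref{cor-passing-subgroup} the Fano axiom would be true over $\overline{\mathbb{F}_2}$, contradicting the existence of the Fano configuration inside the projective plane over $\overline{\mathbb{F}_2}$ (exactly as in Proposition~\ref{p-Fano}). For $\mathbb{R}$ one cannot embed $\overline{\mathbb{F}_2}^*$ into $\mathbb{R}^*=\{\pm1\}\times\mathbb{R}_{>0}$, since the latter has only one nontrivial torsion element; so a different companion field is needed. Here I would instead use a characteristic-$2$ field whose multiplicative group is torsion-free, e.g. $\Bbbk=\mathbb{F}_2(X_1,X_2,\dots)$ (rational functions in infinitely many variables over $\mathbb{F}_2$): its multiplicative group is a free abelian group of countable rank (modulo the trivial torsion $\mathbb{F}_2^*=\{1\}$), and $\mathbb{R}_{>0}$, being a $\mathbb{Q}$-vector space of uncountable dimension, certainly contains a free abelian group of countable rank. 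Hence $\Bbbk^*\hookrightarrow\mathbb{R}^*$, and a simplicial-complex proof over $\mathbb{R}$ would force the Fano axiom over this infinite characteristic-$2$ field, again contradicted by the Fano configuration embedded in the projective plane over the algebraic closure (or over $\Bbbk$ itself, since $\mathbb{F}_2\subset\Bbbk$ and the Fano plane lives over $\mathbb{F}_2$).

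So the skeleton of the proof is: (i) truth over $\mathbb{C}$ and $\mathbb{R}$ — cite Proposition~\ref{p-Fano}; (ii) assume a simplicial-complex proof over $\mathbb{C}$ (resp. $\mathbb{R}$); (iii) exhibit an infinite field $\Bbbk$ of characteristic $2$ with $\Bbbk^*\hookrightarrow\mathbb{C}^*$ (resp. $\hookrightarrow\mathbb{R}^*$); (iv) apply Corollary~\ref{cor-passing-subgroup} to get the Fano axiom over $\Bbbk$; (v) derive a contradiction from the Fano configuration over $\mathbb{F}_2\subseteq\Bbbk$ (using Remark~\ref{rem-extension} if one prefers to work in $\overline{\mathbb{F}_2}$). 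The main obstacle is step (iii) in the real case: unlike $\mathbb{C}^*$, the group $\mathbb{R}^*$ has essentially no torsion, so one cannot reuse $\overline{\mathbb{F}_2}$ and must produce a characteristic-$2$ field whose multiplicative group is (up to trivial torsion) free abelian of countably infinite rank and then embed that into the divisible torsion-free group $\mathbb{R}_{>0}$. This is a soft algebraic fact but it is the one genuinely new ingredient beyond the argument already given for surfaces; everything else is a direct transcription of Proposition~\ref{p-Fano} with Theorem~\ref{th-master-theorem-general} and Corollary~\ref{cor-passing-subgroup} in place of Theorem~\ref{th-master-theorem}.
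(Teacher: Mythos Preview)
Your proposal is correct and follows essentially the same strategy as the paper: exhibit an infinite field of characteristic $2$ whose multiplicative group embeds into $\mathbb{C}^*$ (resp.\ $\mathbb{R}^*$), then invoke Corollary~\ref{cor-passing-subgroup} and the Fano configuration. The paper is slightly more economical: it uses the single field $\mathbb{F}_2(X)$ for both cases, noting that $\mathbb{F}_2(X)^*$ is free abelian of countable rank (on irreducible polynomials) and hence isomorphic to $\mathbb{Q}_{>0}^*$, which embeds into both $\mathbb{R}^*$ and $\mathbb{C}^*$ --- so neither $\overline{\mathbb{F}_2}$ nor infinitely many variables are needed.
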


\begin{proof} The incidence theorem is true over both $\mathbb{C}$ and $\mathbb{R}$ by Proposition~\ref{p-Fano} and Remark~\ref{rem-extension}.

Assume that there is a simplicial-complex proof over $\mathbb{C}$ or $\mathbb{R}$.
Consider the field $\mathbb{Z}/2\mathbb{Z}(X)$ of rational functions with the coefficients in the field $\mathbb{Z}/2\mathbb{Z}$ with two elements. Since the polynomial ring $\mathbb{Z}/2\mathbb{Z}[X]$ is a unique factorization domain, 
it follows that $\mathbb{Z}/2\mathbb{Z}(X)^*$ is a free Abelian group with countably many generators (irreducible polynomials). Thus $\mathbb{Z}/2\mathbb{Z}(X)^*$ is isomorphic to the group $\mathbb{Q}^*_{>0}$ generated by prime numbers. Therefore, $\mathbb{Z}/2\mathbb{Z}(X)^*$ embeds into both $\mathbb{C}^*$ and $\mathbb{R}^*$.

By Corollary~\ref{cor-passing-subgroup}, the incidence theorem must be true over $\mathbb{Z}/2\mathbb{Z}(X)$. However, there is a counterexample: the Fano configuration, 
which is contained in the projective plane over any field of characteristic $2$. Thus, there is no simplicial-complex proof over either $\mathbb{C}$ or $\mathbb{R}$.    
\end{proof}

We emphasize once again that 
we have proved the absence of a simplicial-complex proof with \emph{any} number of auxiliary constructions and case distinctions, not just an elementary simplicial-complex proof. Thus, we need a counterexample for an \emph{infinite} field, not just
the field with $2$ elements (leaving no space for auxiliary constructions). 
An arbitrary infinite field  $\mathbb{F}$ of characteristic $2$ will \emph{neither} do the job, because we need an embedding $\mathbb{F}^*\subset\mathbb{R}^*$. 

Let us also give an example specific to the field $\mathbb{R}$, not true over $\mathbb{C}$.

\begin{example}[Hesse configuration] \label{ex-Hesse} (See Figure~\ref{fig-hesse} to the left.)
     Let $P_{ij}$, where $0\le i,j\le 2$, be nine distinct points. Let any three points $P_{ij},P_{kl},P_{mn}$ with both $i+k+m$ and $j+l+n$ divisible by $3$ be collinear.
     Then $P_{00}\in P_{01}P_{10}$.
\end{example}

The conclusion of this incidence theorem is equivalent to the collinearity of all nine points.

\begin{proposition}\label{p-Hesse} The incidence theorem in Example~\ref{ex-Hesse} is false over $\mathbb{C}$, true over $\mathbb{R}$, but has no simplicial-complex proof over $\mathbb{R}$.
\end{proposition}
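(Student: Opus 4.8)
The plan is to prove the three assertions separately, reusing for the last one the strategy of Proposition~\ref{p-Fano-simplicial} (replacing characteristic $2$ by characteristic $3$).

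\emph{Falsity over $\mathbb{C}$.} First I would identify the twelve collinear triples prescribed in Example~\ref{ex-Hesse} with the twelve lines of the affine plane $\mathrm{AG}(2,\mathbb{F}_3)$: writing the indices additively in $\mathbb{F}_3^2$, a triple $P_{ij},P_{kl},P_{mn}$ is required to be collinear exactly when the three index vectors sum to zero, and three distinct vectors of $\mathbb{F}_3^2$ sum to zero if and only if they form an affine line $\{x,x+v,x+2v\}$. So Example~\ref{ex-Hesse} asserts that nine distinct points of $P^2$ carrying all twelve of these collinearities must all be collinear (this equivalence of the conclusion with total collinearity is the one recorded right after the statement). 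Over $\mathbb{C}$ this fails: the classical Hesse configuration $(9_4,12_3)$ — the nine inflection points of the Fermat cubic $x^3+y^3+z^3=0$, explicitly $(0:1:-\zeta)$, $(1:0:-\zeta)$, $(1:-\zeta:0)$ for $\zeta^3=1$ — realizes precisely these twelve lines and no further collinearity, so the nine points are distinct and not all collinear, and the conclusion is violated.

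\emph{Truth over $\mathbb{R}$.} Here I would invoke the Sylvester--Gallai theorem quoted in the text. In the $\mathrm{AG}(2,\mathbb{F}_3)$ labelling, any two of the nine points $x\ne y$ lie on the collinear triple $\{x,y,-x-y\}$, whose third point $-x-y$ is again one of the nine and distinct from both; hence no line passes through exactly two of the nine points. By Sylvester--Gallai over $\mathbb{R}$, the remaining alternative holds: all nine points lie on one line, and in particular $P_{00}\in P_{01}P_{10}$ (the line $P_{01}P_{10}$ being well defined since the nine points are distinct).

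\emph{No simplicial-complex proof over $\mathbb{R}$.} Suppose one existed. Consider the rational function field $\mathbb{F}_3(X)$, which is infinite of characteristic $3$. Since $\mathbb{F}_3[X]$ is a unique factorization domain, $\mathbb{F}_3(X)^*\cong\{\pm1\}\times\bigoplus_{p}\mathbb{Z}$ (the sum over monic irreducible polynomials), which is abstractly isomorphic to $\mathbb{Q}^*\cong\{\pm1\}\times\bigoplus_{p}\mathbb{Z}$ (the sum over primes), hence embeds into $\mathbb{R}^*$. By Corollary~\ref{cor-passing-subgroup} (applied to Theorem~\ref{th-master-theorem-general}), the incidence theorem would then be true over $\mathbb{F}_3(X)$. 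But in characteristic $3$ the affine plane $\mathrm{AG}(2,\mathbb{F}_3)\subset P^2(\mathbb{F}_3(X))$, taken with constant homogeneous coordinates, gives nine distinct points satisfying all twelve required collinearities while, say, $P_{00},P_{01},P_{10}$ (index sum $(1,1)\ne 0$, so affinely independent) are not collinear — a counterexample. This contradiction rules out any simplicial-complex proof over $\mathbb{R}$. The only point where this argument differs from the characteristic-$2$ Fano case, and hence the place I expect to be most delicate, is checking the embedding $\mathbb{F}_3(X)^*\hookrightarrow\mathbb{R}^*$: unlike $\mathbb{F}_2(X)^*$ this group carries the torsion $\mathbb{F}_3^*=\mathbb{Z}/2\mathbb{Z}$, so one must note that $\mathbb{R}^*\cong\mathbb{Z}/2\mathbb{Z}\times\mathbb{R}_{>0}$ absorbs both the order-two element and a free abelian group of countable rank; the comparison with $\mathbb{Q}^*$ makes this transparent. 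Everything else is routine bookkeeping.
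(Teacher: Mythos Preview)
Your proof is correct and follows essentially the same route as the paper: the Hesse configuration of cubic inflection points for falsity over $\mathbb{C}$, and the embedding $\mathbb{F}_3(X)^*\cong\mathbb{Q}^*\hookrightarrow\mathbb{R}^*$ together with Corollary~\ref{cor-passing-subgroup} for the non-existence of a simplicial-complex proof. The only difference is that for truth over $\mathbb{R}$ the paper, after noting (as you do) that Sylvester--Gallai suffices, also supplies a short elementary argument via parallelograms and a topological crossing obstruction; your appeal to Sylvester--Gallai alone is entirely adequate.
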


\begin{proof} 
This incidence theorem is false over $\mathbb{C}$: the inflection points of a smooth cubic curve give a counterexample.

This incidence theorem holds over $\mathbb{R}$ by the Sylvester--Gallai theorem, but
we give an elementary proof. See Figure~\ref{fig-hesse} to the right. Assume that $P_{00}\notin P_{01}P_{10}$.
We may also assume that both $P_{22}$ and $P_{00}P_{01}\cap P_{10}P_{11}$ are improper points of $P^2$. Then $P_{00}P_{01}P_{10}P_{11}$, $P_{01}P_{02}P_{12}P_{10}$, and $P_{02}P_{00}P_{11}P_{12}$ are parallelograms. 
The extensions of their diagonals $P_{00}P_{10}$, $P_{01}P_{12}$, and $P_{02}P_{11}$ pass through $P_{20}$. This is impossible in $\mathbb{R}^2$ because one of the parallelograms contains the other two, hence one of the diagonals crosses the other two twice, by topological reasons. 


Assume that there is a simplicial-complex proof over $\mathbb{R}$.
Consider the field $\mathbb{Z}/3\mathbb{Z}(X)$ of rational functions with the coefficients in the field $\mathbb{Z}/3\mathbb{Z}$. 
The group $\mathbb{Z}/3\mathbb{Z}(X)^*$ is the direct sum of the free Abelian group with countably many generators (irreducible polynomials) and the group with two elements (constants $\pm 1$). Thus $\mathbb{Z}/3\mathbb{Z}(X)^*$ is isomorphic to the group $\mathbb{Q}^*$ generated by prime numbers and the number $-1$. Therefore, $\mathbb{Z}/3\mathbb{Z}(X)^*$ embeds into $\mathbb{R}^*$.

By Corollary~\ref{cor-passing-subgroup}, the incidence theorem must be true over $\mathbb{Z}/3\mathbb{Z}(X)$. However, there is a counterexample: the \emph{Hesse configuration}, that is, the affine plane over the field with $3$ elements, which is contained in the projective plane over any field characteristic $3$. Thus, there is no simplicial-complex proof over $\mathbb{R}$.
\end{proof}



One can extract a smaller configuration from 
Example~\ref{ex-Hesse}, which still gives an untilable theorem over $\mathbb{R}$:


\begin{example}[Impossible 6-gon theorem] \label{ex-6-gon} (See Figure~\ref{fig-unreal-6-gon} to the left). 
Let $P_1,\dots, P_6$ be distinct points. Suppose that $P_1,P_3,P_5$ are collinear, $P_2,P_4,P_6$ are collinear, $P_1P_2,P_3P_4,P_5P_6$ are concurrent, and $P_2P_3,P_4P_5,P_6P_1$ are concurrent. Then $P_{1}\in P_{2}P_{3}$. 
\end{example}

Conversely, applying the Pappus theorem twice, one can see that the configuration in Example~\ref{ex-6-gon} always embeds into a Hesse configuration. The two configurations are close cousins.

\begin{figure}[htbp]
    \centering
   \includegraphics[width=0.35\textwidth]{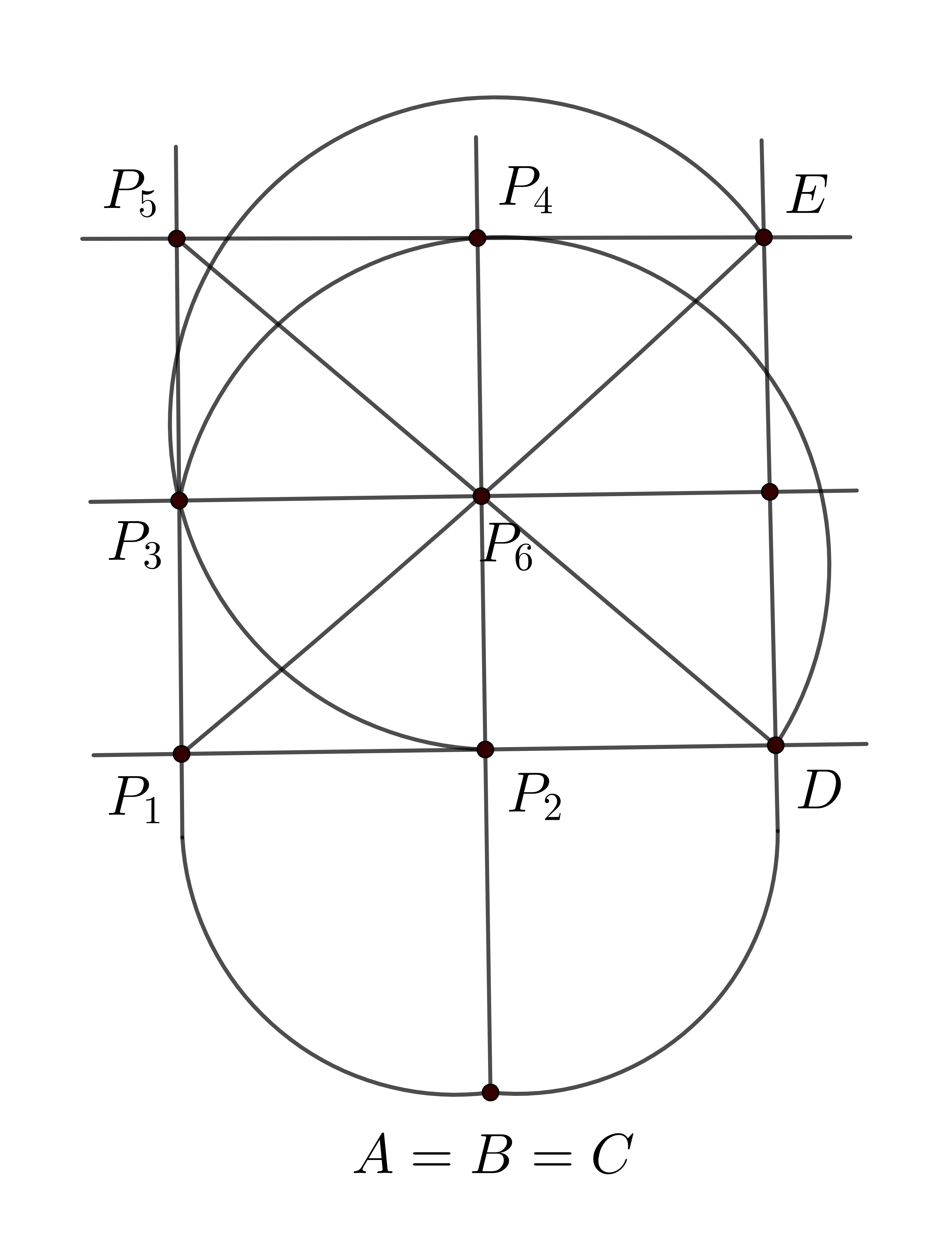}   
   \includegraphics[width=0.6\textwidth]{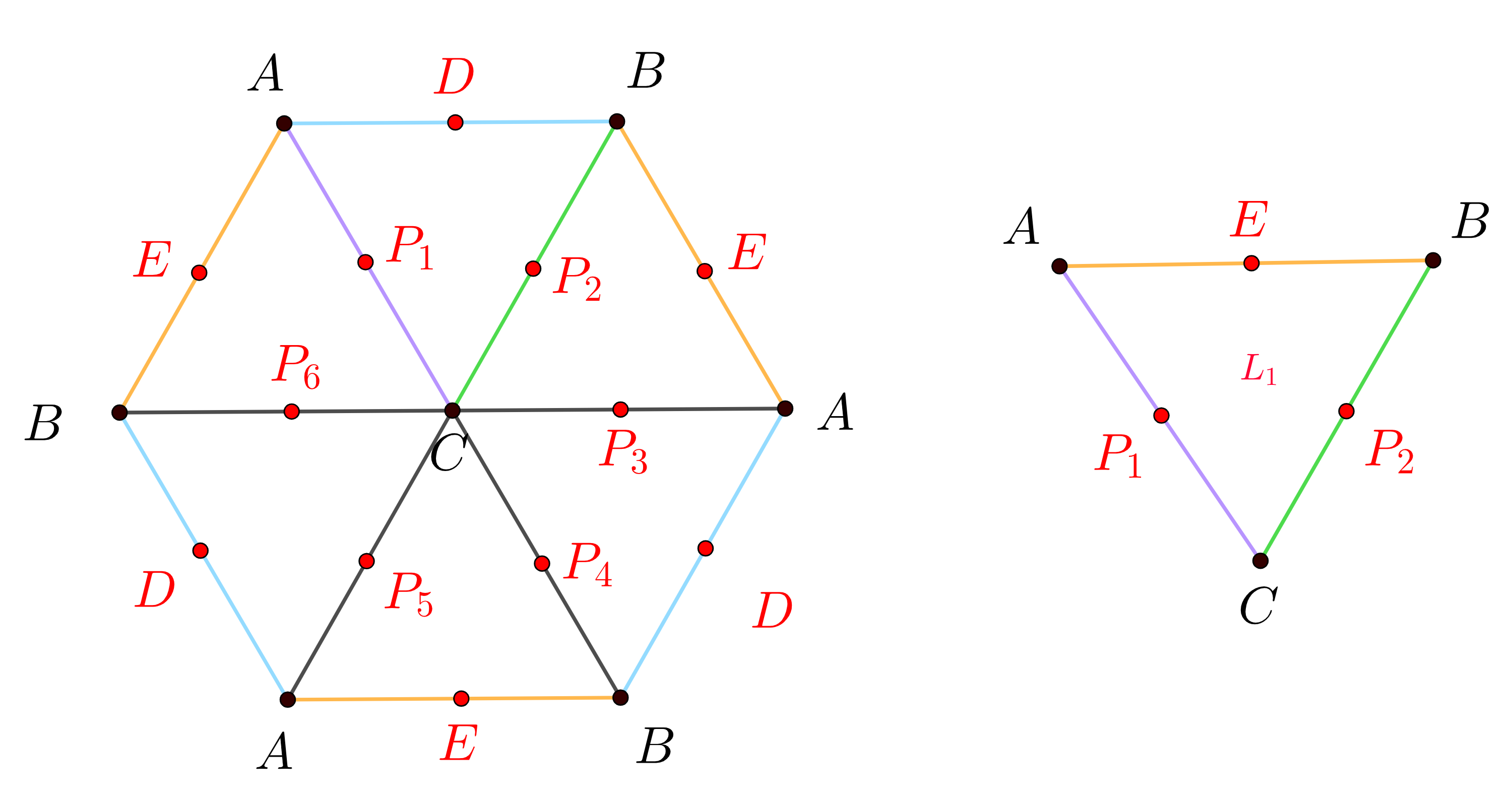}   
    \caption{
    The impossible 6-gon theorem (left) and its would-be tiling proof (right).
    Here 
    $C=P_1P_3\cap P_2P_4$, $D=P_1P_2\cap P_3P_4$, $E=P_2P_3\cap P_4P_5$, $A=P_1P_3\cap DE$, $B=P_2P_4\cap DE$ are auxiliary points. The edges of the same color (except for black) are identified. The would-be tiling proof is incorrect because of the possibility $A=B=C$, and the theorem actually has no tiling proof. See Example~\ref{ex-6-gon}.
    }
    \label{fig-unreal-6-gon}
\end{figure}

At first sight, Figure~\ref{fig-unreal-6-gon} to the right shows a tiling proof of Example~\ref{ex-6-gon} because the face labeled by $L_1$ can be excised over $\mathbb{R}^*$. However, this is not a tiling proof because the points assigned to the vertices can coincide, violating property~($-1$) in Definition~\ref{def-simplicial-complex-proof}.
Actually, Proposition~\ref{p-Hesse} and its proof remain true, if Example~\ref{ex-Hesse} is replaced with Example~\ref{ex-6-gon} (over $\mathbb{Z}/3\mathbb{Z}$, a counterexample is the 
points $(0,0)$, $(1,0)$, $(0,1)$, $(1,2)$, $(0,2)$, $(1,1)$; see Figure~\ref{fig-unreal-6-gon} to the left).
This demonstrates how accurate one should be with general-position arguments and how one can use tilings to generate untilable theorems.

The proof of Propositions~\ref{p-Fano-simplicial} and~\ref{p-Hesse} will \emph{not} work if we replace the Fano and Hesse configurations 
with the affine or projective plane over any finite field other than $\mathbb{Z}/2\mathbb{Z}$ and $\mathbb{Z}/3\mathbb{Z}$, because the multiplicative group of the field would have too much torsion to be embedded into $\mathbb{R}^*\cong\mathbb{Z}/2\mathbb{Z}\oplus\mathbb{R}$. So, 
the examples in this section are quite unique for our proof to work.

\subsection{A paradoxical example}

We proceed with 
a paradoxical example. From Proposition~\ref{p-excision} one is likely to guess that a tiling proof over $\mathbb{R}$ will work over $\mathbb{C}$ as well, once the incidence theorem is true over $\mathbb{C}$. However, this is not the case in general. A counter-example is constructed by coupling the Fano axiom and the 9-gon property (Examples~\ref{ex-Fano} and~\ref{ex-9-gon}). 

\begin{example}[Coupled Fano and 9-gon configuration] \label{ex-F4} (See Figure~\ref{fig-F4})
Let points $D\ne B$ and $E\ne B,C$ lie on 
the \bluenew{sides} $AB$ and $BC$ of a triangle $ABC$. Take $F\in AC$ such that $BF$ passes through $CD\cap AE$. Let $F\in DE$.

Pick points $D',E',F'\notin\{A',B',C'\}$ on 
the \bluenew{sides} $A'B',B'C',C'A'$ of a triangle $A'B'C'$ and a point $O'$ not on the 
\bluenew{sides}.
Starting with a point $P'_1\in O'A'$ distinct from $O'$ and $A'$, set 
$P'_2:=O'B'\cap P'_1D'$, $P'_3:=O'C'\cap P'_2E'$, $P'_4:=O'A'\cap P'_3F'$ etc. 
Let $P'_1=P'_{10}$.

If $A,D\ne E'$, $A\in D’E’$, and $D\in E’F’$, then $D’\in E’F’$.
%
%
\end{example}

\begin{proposition} \label{p-F4}
The incidence theorem in Example~\ref{ex-F4} is true over both $\mathbb{C}$ and $\mathbb{R}$, has a simplicial-complex proof over $\mathbb{R}$ but has no simplicial-complex proof over $\mathbb{C}$.
\end{proposition}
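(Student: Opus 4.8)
The plan is to prove the three assertions separately, exploiting that the configuration of Example~\ref{ex-F4} splits into a \emph{Fano part} (the points $A,B,C,D,E,F$ together with $CD\cap AE$) and a \emph{$9$-gon part} (the primed points), joined only through the coupling conditions $A,D\ne E'$, $A\in D'E'$, $D\in E'F'$. For truth over $\mathbb{C}$ and $\mathbb{R}$: both fields have characteristic $\ne2$, so the Fano axiom (Example~\ref{ex-Fano}, valid over $\mathbb{C}$ by Proposition~\ref{p-Fano} and over $\mathbb{R}$ by Remark~\ref{rem-extension}) applies to the Fano part and yields $D\in AC$; since also $D\in AB$ and $ABC$ is a triangle, this forces $D=A$. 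The point $A=D$ then lies on both lines $D'E'$ and $E'F'$ (which are defined, as $D'\ne E'\ne F'$) and is distinct from $E'$; two distinct lines through $E'$ meet only at $E'$, so $D'E'=E'F'$ and hence $D'\in E'F'$, which is the conclusion.

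For a simplicial-complex proof over $\mathbb{R}$: I would first observe that the hypotheses of Example~\ref{ex-F4} restricted to the primed variables are exactly the hypotheses of the $9$-gon property (Example~\ref{ex-9-gon} with $k=3$), with the same conclusion, and that the submatrix of the incidence matrix of Example~\ref{ex-F4} indexed by the primed points and lines coincides with the incidence matrix of that $9$-gon property. As explained after Definition~\ref{def-simplicial-complex-proof} (and implicit in the proof of Proposition~\ref{p-9-gon} read via Proposition~\ref{p-excision}), gluing the $9$-gon of Figure~\ref{fig-9-gon} to the triangle $A'B'C'$ along the $3$-to-$1$ boundary identification produces a labeled $\Delta$-complex whose marked face $A'B'C'$ can be excised over $\mathbb{R}^*$; after an octahedral subdivision (Remark~\ref{rem-Delta-complex}) this is an elementary simplicial-complex proof over $\mathbb{R}$ of the $9$-gon property. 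I would then check that the same labeled complex is an elementary simplicial-complex proof over $\mathbb{R}$ of Example~\ref{ex-F4}: conditions~(0), ($*$), ($+1$), ($-1$) of Definition~\ref{def-simplicial-complex-proof} only refer to entries $M_{p(i)l(j)}$ with $p(i)$ and $l(j)$ indexing primed points and lines, on which the two matrices agree, while the extra Fano-part and coupling rows and columns impose nothing, since the corresponding points and lines do not occur in the complex.

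For the absence of a simplicial-complex proof over $\mathbb{C}$: suppose one existed. By Corollary~\ref{cor-passing-subgroup} the theorem would be true over every infinite field $\Bbbk$ with $\Bbbk^*$ embeddable into $\mathbb{C}^*$. I would take $\Bbbk=\mathbb{F}_4(X)$: since $\mathbb{F}_4[X]$ is a unique factorization domain with unit group $\mathbb{F}_4^*\cong\mathbb{Z}/3\mathbb{Z}$, the group $\mathbb{F}_4(X)^*$ is the direct sum of $\mathbb{Z}/3\mathbb{Z}$ and a free abelian group of countable rank (on the monic irreducible polynomials), hence embeds into $\mathbb{C}^*$ by sending $\mathbb{Z}/3\mathbb{Z}$ onto the cube roots of unity and the free part onto $\mathbb{Q}^*_{>0}$. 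It remains to produce a counterexample over $\mathbb{F}_4(X)$. Fix a primitive cube root of unity $\omega\in\mathbb{F}_4^*$. I would choose the primed $9$-gon configuration so that the Menelaus product $\sigma:=[A'D'/B'D']\cdot[B'E'/C'E']\cdot[C'F'/A'F']$ equals $\omega$; the telescoping Menelaus computation from the proof of Proposition~\ref{p-9-gon} turns the hypothesis $P'_1=P'_{10}$ into $\sigma^3=1$, which holds since $\omega^3=1$ (so every admissible $P'_1$ works), whereas the conclusion that $D',E',F'$ are collinear amounts to $\sigma=1$, which fails. For the Fano part and the coupling I would take $A$ a generic point of the line $D'E'$ and $D$ a generic point of the line $E'F'$ (so $A,D\ne E'$ and, automatically, $A\ne D$ since $D'E'\cap E'F'=\{E'\}$), let $B$ be a generic point of line $AD$, choose $C$ generically, let $E$ be a generic point of $BC$, and set $F:=AC\cap\ell$, where $\ell$ is the line joining $B$ to $CD\cap AE$; a short coordinate computation (or a projective-invariance argument) shows that in characteristic $2$ this $F$ automatically lies on $DE$, so all Fano-part hypotheses hold. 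Thus all hypotheses of Example~\ref{ex-F4} are satisfied over $\mathbb{F}_4(X)$ while the conclusion $D'\in E'F'$ fails, a contradiction.

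The routine parts are the truth over $\mathbb{C}$ and $\mathbb{R}$ and the bookkeeping that the $9$-gon complex still qualifies for the larger incidence matrix. The main obstacle is the counterexample over $\mathbb{F}_4(X)$: one must verify that the (partly ambiguous) distinctness and general-position constraints implicit in the incidence matrix of Example~\ref{ex-F4} can all be met at once, and in particular that coupling the Fano part to a $9$-gon with $\sigma\ne1$ introduces no hidden obstruction — which is precisely where the characteristic-$2$ identity ``$F\in DE$'' and the abundance of generic choices available in the infinite field $\mathbb{F}_4(X)$ are used.
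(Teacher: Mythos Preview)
Your proof is correct and follows essentially the same route as the paper: truth over $\mathbb{C}$ and $\mathbb{R}$ via the Fano axiom forcing $A=D$, the simplicial-complex proof over $\mathbb{R}$ by reusing the $9$-gon complex on the primed submatrix, and the obstruction over $\mathbb{C}$ via the embedding $\mathbb{F}_4(X)^*\hookrightarrow\mathbb{C}^*$ together with a counterexample combining a failing $9$-gon (Menelaus product a primitive cube root of unity) with a characteristic-$2$ Fano configuration. The only cosmetic difference is that the paper builds the counterexample over $\mathbb{F}_4$ and invokes Remark~\ref{rem-extension} to pass to $\mathbb{F}_4(X)$, whereas you construct it directly in the infinite field; both are valid.
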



\begin{figure}[htbp]
    \centering
       \includegraphics[width=0.5\textwidth]{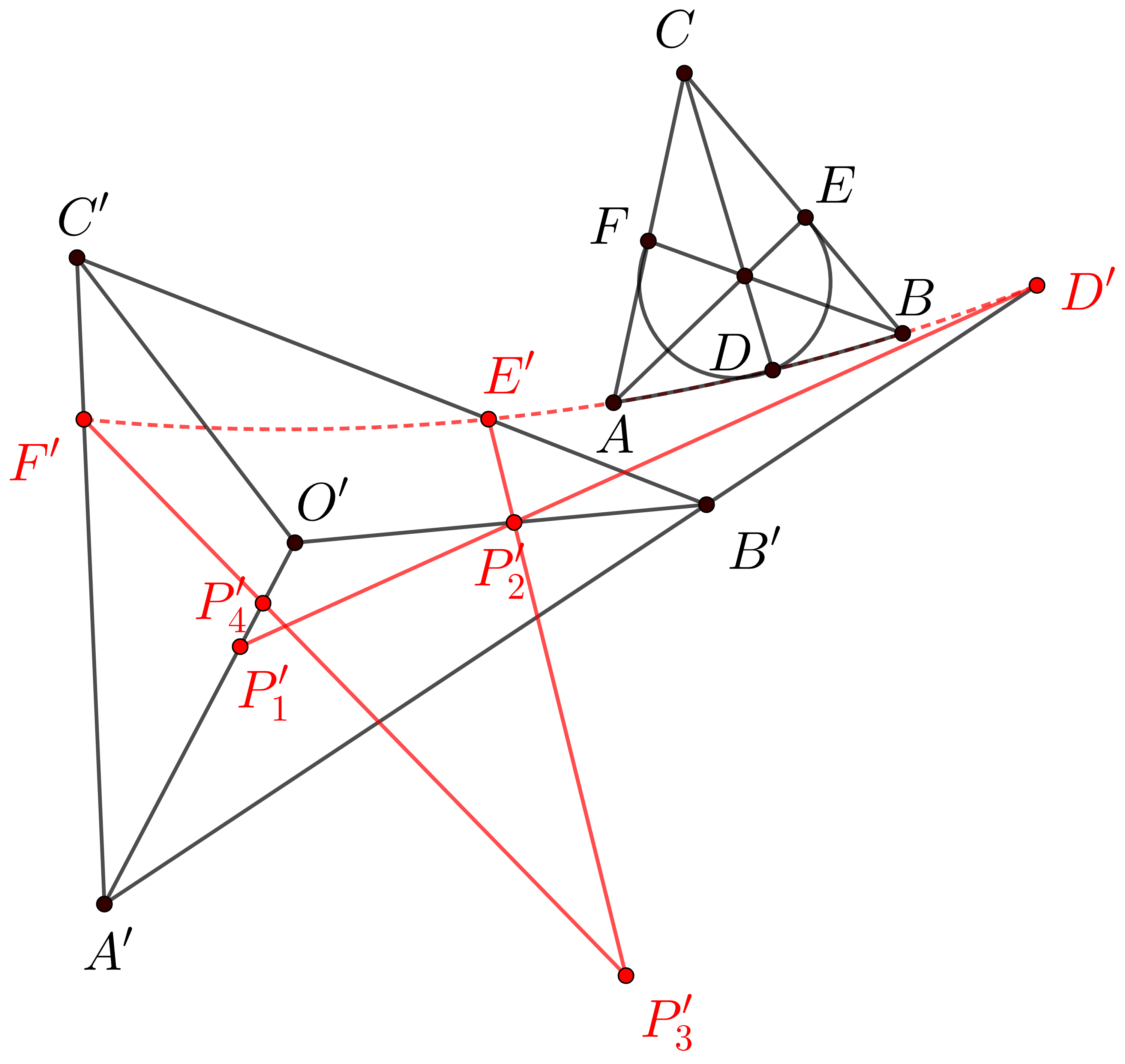} 
    \caption{
    Coupled Fano and 9-gon configuration.
    See Example~\ref{ex-F4}. 
    }
    \label{fig-F4}
\end{figure}

\begin{proof}[Proof of Proposition~\ref{p-F4}] 
    The incidence theorem is true over both $\mathbb{C}$ and $\mathbb{R}$, because 
    $A=D$ by Proposition~\ref{p-Fano}, so that the conditions $A,D’,F’\ne E'$, $A\in D’E’$, and $D\in E’F’$ imply $D’\in E’F’$.
    (Here we do not even need the construction in the second paragraph of Example~\ref{ex-F4}.)

    The incidence theorem has a simplicial-complex proof over $\mathbb{R}$ because the implication $P_1'=P_{10}'\implies D'\in E'F'$ does, by Proposition~\ref{p-3n-gon}. (Here we do not even need the constructions in the first and the third paragraph of Example~\ref{ex-F4}.)
    
    Let us prove that the incidence theorem has no simplicial-complex proof over $\mathbb{C}$. Assume the converse. Consider the field $\mathbb{F}_4(X)$ of rational functions with the coefficients in 
    $\mathbb{F}_4$. 
    Since the polynomial ring $\mathbb{F}_4[X]$ is a unique factorization domain, 
    it follows that $\mathbb{F}_4(X)^*$ is the direct sum of the free Abelian group with countably many generators (irreducible polynomials) and the group with three elements ($\mathbb{F}_4^*$). Thus $\mathbb{F}_4(X)^*$ is isomorphic to
    the group generated by prime numbers and the cubic root of unity $(i\sqrt{3}-1)/2$, 
    hence embeds into $\mathbb{C}^*$.

    By Corollary~\ref{cor-passing-subgroup}, the incidence theorem must be true over $\mathbb{F}_4(X)$, hence over $\mathbb{F}_4$ by Remark~\ref{rem-extension}. However, over $\mathbb{F}_4$, there is a counterexample: Indeed, by Proposition~\ref{p-3n-gon}, there \bluenew{is} a counterexample to the incidence theorem in Example~\ref{ex-9-gon} for $k=3$. It remains to pick points $A$ and $D$ (distinct from $E'$) on the lines $D'E'$ and $E'F'$ respectively and construct a Fano configuration with the points $A$ and $D$. Then all the assumptions of Example~\ref{ex-F4} hold, but the conclusion does not. Thus, there is no simplicial-complex proof over $\mathbb{C}$.
\end{proof}

Here, using the field with at least four elements is essential: $\mathbb{F}_2$ and $\mathbb{F}_3$ would never do the job because $\mathbb{F}_2^*,\mathbb{F}_3^*\subset \mathbb{R}^*$.



\subsection{Generalized gropes}
\label{ssec:gropes}

Now we demonstrate that the Master Theorem~\ref{th-master-theorem-general} over a given field produces many more incidence theorems than the Master Theorem~\ref{th-master-theorem} known before.
We focus on real geometry, although the same applies to any field. The underlying simplicial (and $\Delta$-) complexes \bluenew{are glued from orientable surfaces one by one, each attached along its boundary}. They bear some similarity to the gropes used in the proof of the four-dimensional generalized Poincar\'e conjecture~\cite{Behrens-etal-21, Freedman-Quinn-90}; 
see \cite{Teichner} for a concise introduction. \bluenew{For this reason, these} complexes are called generalized gropes.
The 
definition 
is a straightforward generalization of the construction in Figure~\ref{fig-9-gon} to the right.

\begin{definition}[Generalized grope] 
    \label{def-grope}
    An integer $k>1$ is \emph{torsion-coprime} over $\mathbb{F}^*$ if the equation $x^k=1$ has a unique solution $x=1$ in $\mathbb{F}^*$. (E.g., the torsion-coprime integers over $\mathbb{R}^*$ are exactly odd numbers $k>1$.)
    
    A generalized grope of complexity $q$ over $\mathbb{F}^*$ is a $\Delta$-complex defined inductively as follows. 
    
    A \emph{generalized grope of complexity $0$ over $\mathbb{F}^*$} is a $\Delta$-triangulated closed orientable surface.

    A \emph{generalized grope of complexity $q+1$ over $\mathbb{F}^*$} is obtained from a generalized grope of complexity $q$ over $\mathbb{F}^*$ by removing an open face $f$ and gluing in a $\Delta$-triangulated compact orientable surface with the boundary $\partial S$ having one component and containing $3k$ vertices, where $k>1$ is torsion-coprime over $\mathbb{F}^*$,
    by identifying the boundaries using a simplicial covering~$\partial S\to\partial f$.

    A \emph{generalized grope} over $\mathbb{F}^*$ is a $\Delta$-complex that is a generalized grope over $\mathbb{F}^*$ of complexity $q$ for some~$q$. The minimal $q$ with this property is the \emph{complexity} of the generalized grope. (We leave aside whether a number $q$ with this property is uniquely determined by the $\Delta$-complex.)
    %
\end{definition}

Examples of generalized gropes of complexity $1$ and $2$ over $\mathbb{R}^*$ are shown in Figure~\ref{fig-9-gon} to the right and Figure~\ref{fig-grope}.
\bluenew{The former is obtained by gluing the boundary of a 9-gon to the boundary of a triangle by an obvious 3--1 map.}
There is no generalized grope of positive complexity over $\mathbb{C}^*$ or the multiplicative group of any other algebraically closed field. For such fields, generalized gropes generate no new incidence theorems compared to surfaces.


\begin{figure}[htbp]
    \centering
    \includegraphics[width=0.95\textwidth]{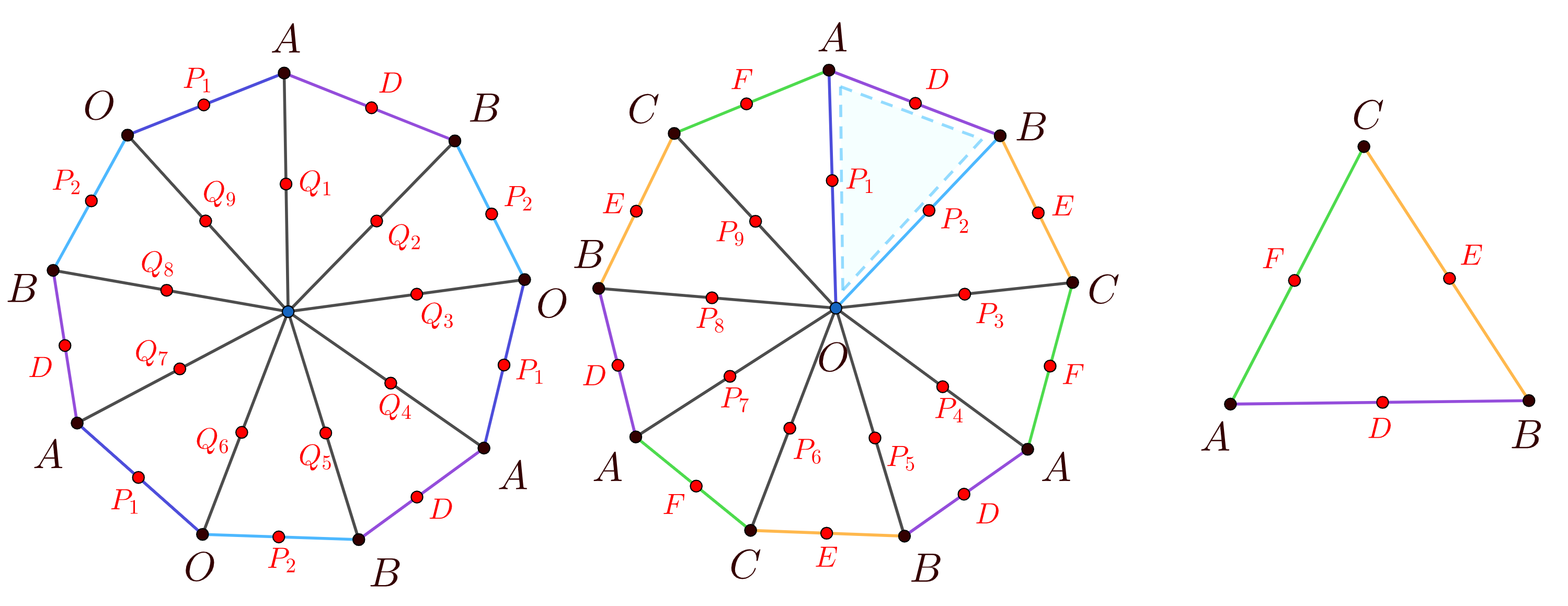}
    \caption{A (labeled) generalized grope over $\mathbb{R}^*$. The edges of the same color (except for black) are identified. The blue face is absent. The generalized grope can be constructed in two steps: First, the boundary of the 9-gon in the middle is glued to the triangle 
    to the right. Second, the interior of the blue face 
    is removed, and the boundary of the 9-gon to the left is glued to the boundary of the face. The labeled generalized grope generates an incidence theorem true over~$\mathbb{R}$. See Definition~\ref{def-grope}.}
    \label{fig-grope} 
\end{figure}

\begin{proposition}\label{p-grope}
    Any open face of a generalized grope over $\mathbb{F}^*$ can be excised over $\mathbb{F}^*$. 
\end{proposition}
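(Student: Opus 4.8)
The plan is to induct on the complexity $q$ of the generalized grope. The base case $q=0$ is exactly Remark~\ref{rem-surface}: a $\Delta$-triangulated closed orientable surface has every open face excisable over any commutative group, in particular over $\mathbb{F}^*$. (Here one should note that Remark~\ref{rem-surface}, stated for triangulated surfaces, extends verbatim to $\Delta$-triangulated ones, as pointed out in Remark~\ref{rem-Delta-complex}: the multiplication-and-cancellation argument only uses that each edge lies in exactly two faces with opposite induced orientations.)

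For the inductive step, suppose $G'$ is a generalized grope of complexity $q+1$, obtained from a grope $G$ of complexity $q$ by removing an open face $f$ and gluing in a $\Delta$-triangulated compact orientable surface $S$ with connected boundary via a simplicial covering $\varphi\colon\partial S\to\partial f$ of some degree $k>1$ that is torsion-coprime over $\mathbb{F}^*$. Fix an arbitrary open face $\sigma$ of $G'$; we must excise it. Let $U\colon\vec E(G')\to\mathbb{F}^*$ satisfy conditions~(E) and~(F) of Definition~\ref{def-excised} for all faces of $G'$ except $\sigma$. We distinguish two cases according to whether $\sigma$ lies in $S$ or in $G\setminus f$.

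\emph{Case 1: $\sigma$ is a face of the old grope $G$ (so $\sigma\neq f$, since $f$ was removed).} The faces of $S$ are among those where $U$ satisfies~(F). Multiplying the relations $U(ab)U(bc)U(ca)=1$ over all faces of $S$, with each face oriented compatibly with a chosen orientation of $S$, every interior edge of $S$ cancels by~(E), and what survives is the product of $U$ along the oriented boundary $\partial S$. Since $\varphi\colon\partial S\to\partial f$ is a simplicial covering of degree $k$, the boundary circle of $S$ wraps $k$ times around $\partial f=a_0b_0c_0$ say, so this surviving product equals $\bigl(U(a_0b_0)U(b_0c_0)U(c_0a_0)\bigr)^{\pm k}$ up to the orientation sign. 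Hence $\bigl(U(a_0b_0)U(b_0c_0)U(c_0a_0)\bigr)^{k}=1$, and because $k$ is torsion-coprime over $\mathbb{F}^*$ this forces $U(a_0b_0)U(b_0c_0)U(c_0a_0)=1$. Therefore the restriction of $U$ to $\vec E(G)$ satisfies~(E) and~(F) for \emph{every} face of $G$ other than $\sigma$ — including the face $f$, whose defining relation we have just recovered. By the inductive hypothesis, $\sigma$ can be excised in $G$, so $U$ satisfies the $\sigma$-relation, and we are done. (A small point: the edges of $\partial f$ need not be edges of $G$ if $f$ was itself subdivided, but in the $\Delta$-complex $G$ the boundary of each face is a genuine edge-triple, so this is fine; if one prefers genuine simplicial complexes, apply the octahedral subdivision of Remark~\ref{rem-Delta-complex}.)

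\emph{Case 2: $\sigma$ is a face of the glued surface $S$.} Now $U$ satisfies~(F) for all faces of $G$ other than $f$ (and $f$ is not a face of $G'$ at all, so there is no constraint to worry about there directly), and for all faces of $S$ other than $\sigma$. First apply the base-case surface argument inside $G$ — more precisely, $G$ is a grope of complexity $q$, so by the inductive hypothesis its face $f$ can be excised; since $U$ restricted to $\vec E(G)$ satisfies~(E) and~(F) for all faces of $G$ except $f$, excision of $f$ gives $U(a_0b_0)U(b_0c_0)U(c_0a_0)=1$ where $a_0b_0c_0=\partial f$. Pulling this back along the covering $\varphi$, the product of $U$ along $\partial S$ equals $\bigl(U(a_0b_0)U(b_0c_0)U(c_0a_0)\bigr)^{\pm k}=1$. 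Now run the surface-excision argument of Remark~\ref{rem-surface} inside the closed-up surface obtained from $S$: multiply the~(F)-relations over all faces of $S$ other than $\sigma$; interior edges cancel, the boundary contribution is the just-computed product $1$ along $\partial S$, and what remains is exactly the $\sigma$-relation. Hence $\sigma$ is excised.

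\emph{Where the difficulty lies.} The routine content is the two cancellation computations, which are the same telescoping argument used in Remark~\ref{rem-surface}. The genuinely load-bearing step is bookkeeping the boundary contribution: one must check that the product of $U$ around $\partial S$, when $\partial S\to\partial f$ is a $k$-fold simplicial cover of a triangle, is the $k$-th power (up to sign) of the triangle's holonomy $U(a_0b_0)U(b_0c_0)U(c_0a_0)$, and that the single torsion-coprimality hypothesis on $k$ is exactly what is needed to pass from the $k$-th power being $1$ to the holonomy itself being $1$. A secondary subtlety is orientations: one must fix orientations of $S$ and of $G$ so that the induced boundary orientations are compatible with the covering map and with the cyclic order on $\partial f$; getting a consistent sign convention here is the only place where care is required. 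Everything else follows the template already established for surfaces, and no further topological input is needed beyond the inductive structure of Definition~\ref{def-grope}.
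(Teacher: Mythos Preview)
Your proof is correct and follows essentially the same route as the paper's: induction on complexity, with the two cases depending on whether the face to be excised lies in the old grope or in the newly attached surface, using the torsion-coprimality of $k$ to pass from the $k$-th power of the boundary holonomy to the holonomy itself. The only differences are notational (you write $G$ for the complexity-$q$ grope where the paper writes $G'$) and that you include some extra commentary on orientations and $\Delta$-complexes that the paper leaves implicit.
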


\begin{corollary} \label{cor-grope}
The incidence theorem generated by any bijectively labeled generalized grope over $\mathbb{F}^*$ satisfying property~(0) is true over $\mathbb{F}$.
\end{corollary}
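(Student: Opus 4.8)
The plan is to prove the statement by induction on the complexity $q$ of the generalized grope over $\mathbb{F}^*$. The base case $q=0$ is exactly Remark~\ref{rem-surface}: any open face of a $\Delta$-triangulated closed orientable surface can be excised over any commutative group, in particular over $\mathbb{F}^*$. (Strictly, Remark~\ref{rem-surface} is stated for triangulated surfaces, but by Remark~\ref{rem-Delta-complex} the excision property is insensitive to passing between $\Delta$-complexes and genuine simplicial complexes, so the base case is covered.) For the inductive step, suppose $G'$ is a generalized grope of complexity $q+1$, obtained from a generalized grope $G$ of complexity $q$ by deleting an open face $f$ and gluing in a $\Delta$-triangulated compact orientable surface $S$ with connected boundary containing $3k$ vertices, via a simplicial covering $\varphi\colon\partial S\to\partial f$, where $k>1$ is torsion-coprime over $\mathbb{F}^*$. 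I want to show that an arbitrary open face $\sigma_0$ of $G'$ can be excised over $\mathbb{F}^*$.

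The key case distinction is whether $\sigma_0$ lies in the glued-in surface $S$ or in the old part $G\setminus f$. First I would fix a function $U\colon\vec E(G')\to\mathbb{F}^*$ satisfying conditions~(E) and~(F) of Definition~\ref{def-excised} for all faces except $\sigma_0$, and I want to deduce the relation around $\sigma_0$. \emph{Case 1: $\sigma_0$ is a face of $S$.} Then $U$ restricted to the oriented edges of $S$ satisfies (E) and the face relations (F) for every face of $S$ other than $\sigma_0$. Now $S$ is a compact orientable surface with one boundary component; capping off its boundary with a disc produces a closed orientable surface $\widehat S$, and the face $\sigma_0$ can be excised in $\widehat S$ by Remark~\ref{rem-surface} \emph{provided} the boundary-disc relation holds, i.e. provided $\prod_{e\subset\partial S} U(e)=1$ along the oriented boundary cycle. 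But $\partial S\to\partial f$ is a simplicial covering of degree $k$, so going once around $\partial S$ traverses the boundary cycle of $f$ exactly $k$ times; hence the boundary product of $U$ over $\partial S$ equals $w^k$, where $w=\prod_{e\subset\partial f}U(e)$ is the product of $U$ around $\partial f$ taken once (well-defined up to the value at a fixed basepoint edge, which cancels). Multiplying the face relations of $G$ around all faces adjacent to $f$ — more precisely, observing that in $G$ the face $f$ was excisable (inductive hypothesis with $q$, since $f$ is a face of $G$), and $U$ satisfies (F) for every face of $G$ except possibly those we have not yet constrained — gives that $w^k=1$ once we check $U$ descends to a valid cochain on $G$. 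Since $k$ is torsion-coprime over $\mathbb{F}^*$, $w^k=1$ forces $w=1$, hence the boundary-disc relation holds, hence $\sigma_0$ is excised in $\widehat S$ and therefore in $G'$. \emph{Case 2: $\sigma_0\subset G\setminus f$.} Here one first pushes the data across $S$: the restriction of $U$ to $S$ satisfies (E) and all face relations (F) of $S$ (note $\sigma_0\notin S$), so by the same capping-off and torsion-coprime argument the product $w$ of $U$ around $\partial f$ equals $1$; this means $U$ defines (after collapsing $S$ back to $f$, or rather after noting the old edges of $G$ that survive in $G'$) a cochain $\bar U$ on $G$ satisfying (E) and the face relation (F) for every face of $G$ except $\sigma_0$ (the relation for $f$ holds because $w=1$). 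By the inductive hypothesis applied to $G$, the face $\sigma_0$ can be excised over $\mathbb{F}^*$ in $G$, giving the relation around $\sigma_0$; this relation involves only edges present in $G'$, so it transfers back. In either case, the desired relation $U(a_0b_0)U(b_0c_0)U(c_0a_0)=1$ holds, completing the induction.

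The main obstacle I expect is bookkeeping the boundary identification cleanly: making precise that the edges of $\partial f$ in $G$ correspond, under the covering $\varphi$, to $k$ disjoint edge-paths in $\partial S$ whose $U$-products each equal the \emph{same} value $w$ (this uses that $\varphi$ is a covering, hence locally injective and globally $k$-to-$1$ with $\partial S$ connected), and hence that the total boundary product over $\partial S$ is literally $w^k$ rather than a product of $k$ a priori different terms — the point is that a covering of the circle by a circle of degree $k$ wraps uniformly, so the monodromy contributions multiply to the $k$-th power of the single monodromy around $\partial f$. Equivalently, in cohomological language (cf.~Remark~\ref{rem-excision}), the covering $\varphi\colon\partial S\to\partial f\simeq S^1$ induces multiplication by $k$ on $H^1(S^1;\mathbb{Z})$ and hence the pullback on $H^1(-;\mathbb{F}^*)$ is the $k$-th power map, so a class in the image of $H^1(G;\mathbb{F}^*)$ (restricted to the boundary) pulls back to a $k$-th power; torsion-coprimality of $k$ then says the $k$-th power map on $\mathbb{F}^*$ is injective, which is exactly what forces $w=1$. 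Once this uniform-wrapping fact is nailed down, both cases reduce to Remark~\ref{rem-surface} plus the inductive hypothesis, and Corollary~\ref{cor-grope} then follows immediately from Proposition~\ref{p-excision} (or Theorem~\ref{th-master-theorem-general}) applied to the marked face, which is excisable by the Proposition just proved.
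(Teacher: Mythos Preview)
Your proposal is correct and follows essentially the same route as the paper: you prove Proposition~\ref{p-grope} by induction on the complexity of the generalized grope, splitting into the two cases according to whether the marked face lies in the newly attached surface $S$ or in the lower-complexity grope, and then deduce Corollary~\ref{cor-grope} from Proposition~\ref{p-excision}. One small remark: in your Case~1 ($\sigma_0\subset S$) the torsion-coprime hypothesis is actually not needed---the inductive hypothesis on the lower-complexity grope already gives $w=1$ directly (hence $w^k=1$ trivially), whereas torsion-coprimality is genuinely used only in your Case~2 to pass from $w^k=1$ (obtained from the closed-surface argument on $\widehat S$) to $w=1$; apart from this harmless redundancy your two cases match the paper's Cases~2 and~1 respectively.
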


\begin{proof}[Proof of Proposition~\ref{p-grope}]
    The proof is by induction on the grope complexity. The base (zero complexity) is Remark~\ref{rem-surface}. To perform the induction step, assume that the proposition holds for all generalized gropes of complexity $q$ over $\mathbb{F}^*$. Take a generalized grope $G$ of complexity $q+1$ over $\mathbb{F}^*$, its face $a_0b_0c_0$, and an arbitrary function $U\colon\vec E\to\mathbb{F}^*$ satisfying conditions~(E) and~(F) of Definition~\ref{def-excised}. Let $G$ be obtained from a generalized grope $G'$ of complexity $q$ and a surface $S$ as described in Definition~\ref{def-grope}. Let $\partial S=a_1b_1c_1\dots a_kb_kc_ka_1$ where all edges $a_ib_i$ are glued to the same edge $a'b'$ of $G'$,
    and similarly for $b_ic_i$ and $c_ia_{i+1}$. Denote $U(a_ib_i):=U(a'b')$, $U(b_ic_i):=U(b'c')$, and $U(c_ia_{i+1}):=U(c'a')$. Consider two cases.

    Case 1: $a_0b_0c_0\subset G'$. Analogously to Remark~\ref{rem-surface},
    we get $U(a_1b_1)U(b_1c_1)
    \dots U(c_ka_1)=1$. Thus 
    $\left(U(a'b')U(b'c')U(c'a')\right)^k=1$. Since the number $k$ is torsion-coprime over $\mathbb{F}^*$, we get $U(a'b')U(b'c')U(c'a')=1$. Then the restriction of $U$ to the set of oriented edges of $G'$ satisfies~(E) and~(F). By the inductive hypothesis, we get $U(a_0b_0)U(b_0c_0)U(c_0a_0)=1$. 

    Case 2: $a_0b_0c_0\subset S$. By the inductive hypothesis, we get 
$U(a_1b_1)U(b_1c_1)
\dots U(c_ka_1)=\left(U(a'b')U(b'c')U(c'a')\right)^k=1^k=1$. Then $U(a_0b_0)U(b_0c_0)U(c_0a_0)=1$.

    In both cases, the open face $a_0b_0c_0$ can be excised over $\mathbb{F}^*$. 
\end{proof}



Proposition~\ref{p-grope} does \emph{not} characterize generalized gropes. 
As a dummy example, any open face of any simplicial complex can be excised over $\mathbb{F}_2^*$ (because $\mathbb{F}_2^*=\{1\}$), although not every simplicial complex is a generalized grope over $\mathbb{F}_2^*$ (because the Euler characteristic of the latter is always even).
As another example, 
any open face of a triangulated closed \emph{non-orientable} surface can be excised over $\mathbb{F}_3^*$, although the latter is not a generalized grope over $\mathbb{F}_3^*$. 
This is shown analogously to Remark~\ref{rem-surface}
because $\mathbb{F}_3^*=\{+1,-1\}$ 
thus $U(ab)^2=1$ for any edge $ab$. This idea will be used in a moment to construct a counterexample
over $\mathbb{R}^*$.

By an \emph{elementary grope-tiling proof} over $\mathbb{F}$ we mean a particular case of an elementary simplicial-complex proof over $\mathbb{F}$, when the simplicial-complex in Definition~\ref{def-simplicial-complex-proof} is a generalized grope over~$\mathbb{F}^*$. A \emph{grope-tiling proof} over $\mathbb{F}$ is then defined analogously to a surface-tiling proof.

For instance, Example~\ref{ex-9-gon} has a grope-tiling proof (see Figure~\ref{fig-9-gon} to the right).

Now an incidence theorem having a simplicial-complex 
but not a grope-tiling proof over~$\mathbb{R}^*$: 

\begin{example}[Coupled complete quadrilateral and 6-gon configuration] \label{ex-non-grope} (See Figure~\ref{fig-non-grope} to the top)
    Let $ABC$ be a triangle. Let points $D,E,F,G,H,I\notin\{A,B,C\}$ on the lines $AB,BC,CA,AB,BC,CA$ respectively satisfy $D\in HI$, $E\in IG$, and $F\in GH$. 
    
    Take a point $O\notin AB,BC,CA$. Starting with 
    $P_1\in OA$ distinct from $O$ and $A$, construct $P_2:=OB\cap P_1D$, $P_3:=OC\cap P_2E$, $P_4:=OA\cap P_3F$ etc. 
    If $P_7=P_1$, then $D\in EF$. 
\end{example}

\begin{figure}[htbp]
    \centering
    \includegraphics[width=0.9\textwidth] {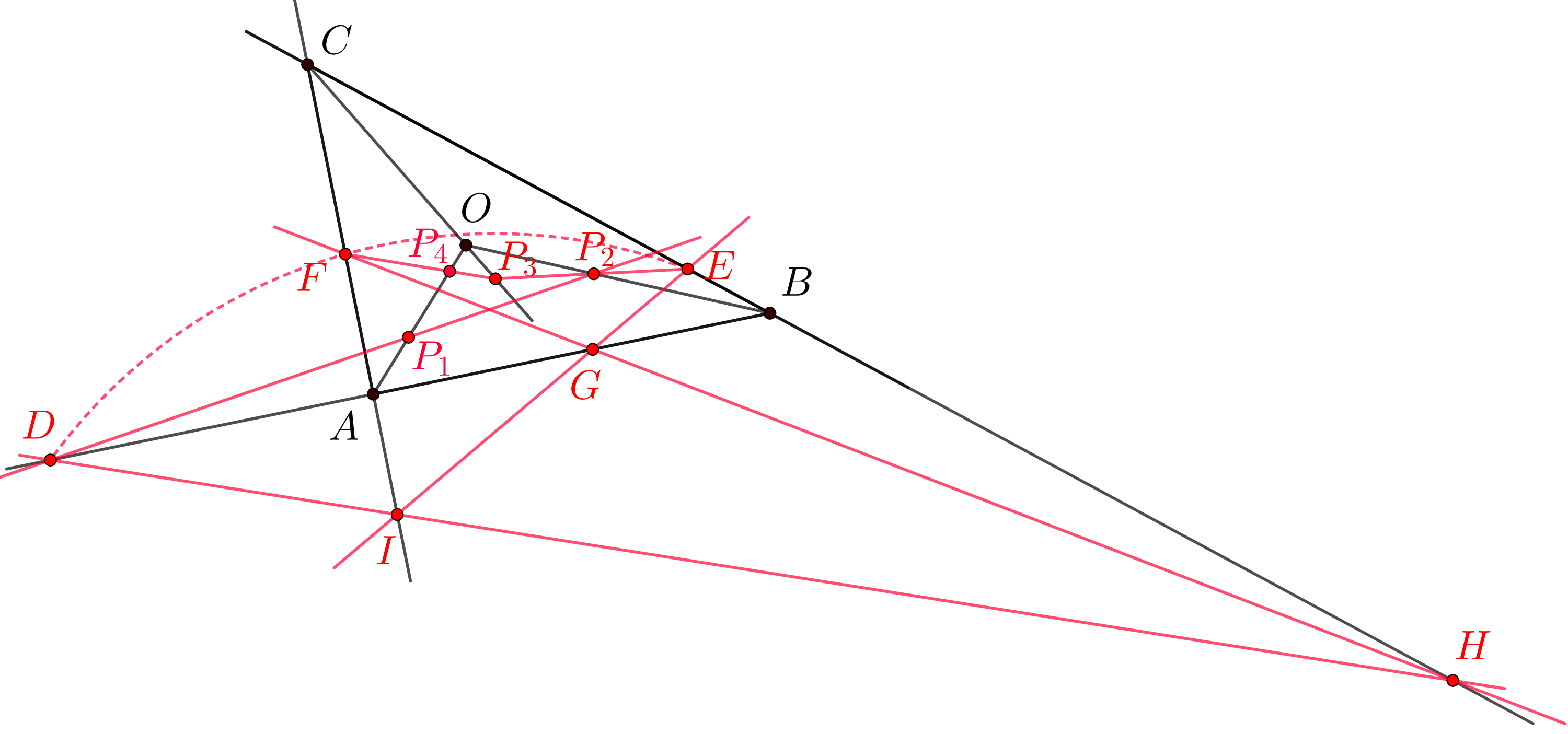} 
    \includegraphics[width=0.7\textwidth] {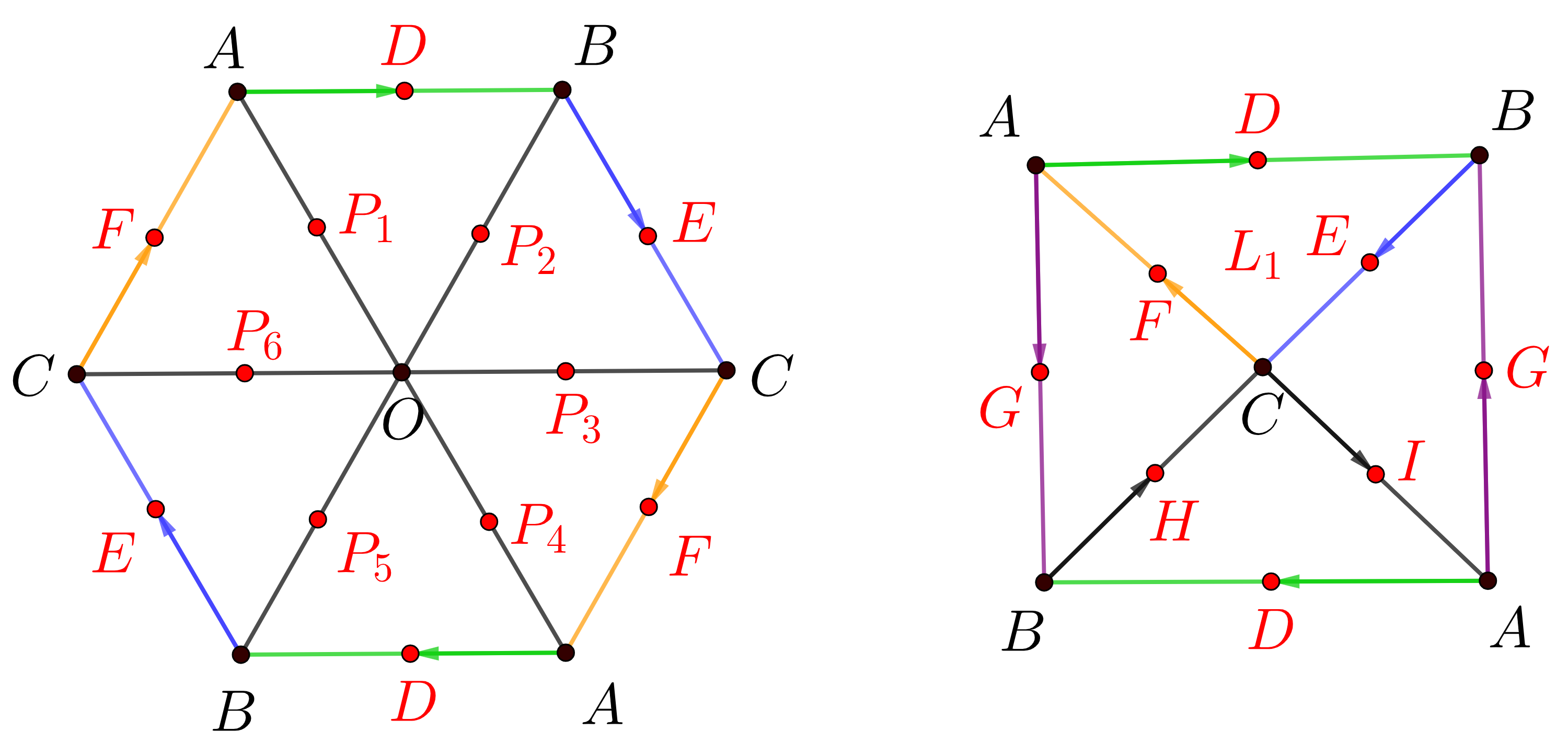} 
    \includegraphics[width=0.7\textwidth] {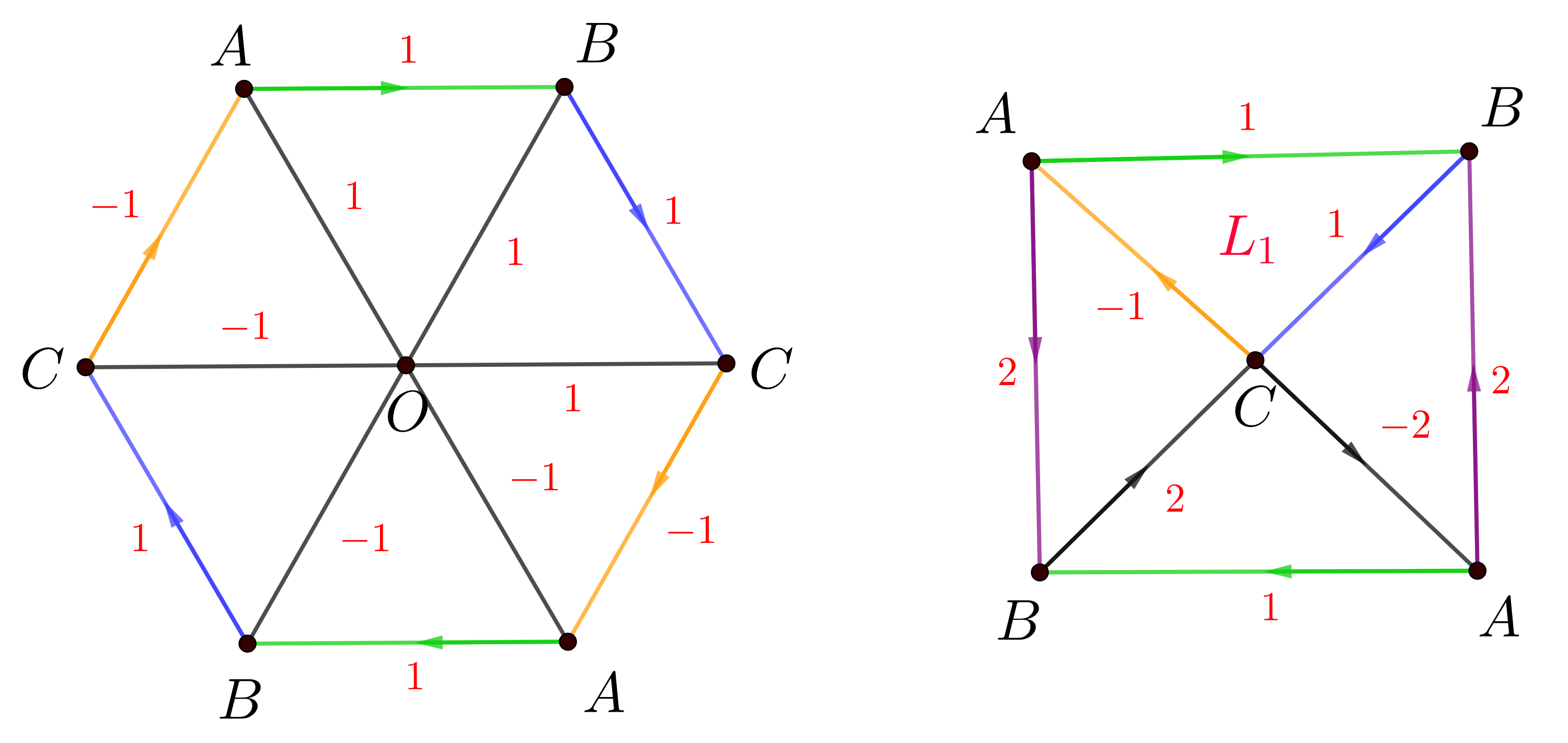} 
    \caption{
    Top: coupled complete quadrilateral and 6-gon configuration.
    Middle: a labeled $\Delta$-complex. The edges of the same color (except for black) are identified.
    Bottom: Elements of the field with $5$ elements assigned to oriented edges to disprove the incidence theorem over this field.
    See Example~\ref{ex-non-grope} and the proof of Proposition~\ref{p-non-grope}.
    }
    \label{fig-non-grope}
\end{figure}

\begin{proposition}\label{p-non-grope} The incidence theorem in Example~\ref{ex-non-grope} is true over $\mathbb{R}$, has a simplicial-complex proof over $\mathbb{R}$, but does not have a grope-tiling proof over $\mathbb{R}$.
\end{proposition}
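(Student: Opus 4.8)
The plan is to prove the three assertions separately, with the first two being relatively short applications of earlier machinery and the third being the crux.

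\emph{Truth over $\mathbb{R}$ and existence of a simplicial-complex proof over $\mathbb{R}$.} The conclusion $D\in EF$ follows from $P_7=P_1$ exactly as in Proposition~\ref{p-3n-gon} with $k=6$: writing Menelaus's theorem for the six triples $P_1P_2D$, $P_2P_3E$, $P_3P_4F$, $P_4P_5G$, $P_5P_6H$, $P_6P_1I$ on the sides of triangles $AOB,BOC,COA,AOB,BOC,COA$, multiplying, and using $D\in HI$, $E\in IG$, $F\in GH$ (again via Menelaus), one gets $\bigl([AD/BD][BE/CE][CF/AF]\bigr)^3 = 1$; actually one should couple this with the complete-quadrilateral relations to extract the cube rather than a sixth power. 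Since $x^3=1$ has a unique real solution, $[AD/BD][BE/CE][CF/AF]=1$, hence $D\in EF$. To see this is realized by a genuine simplicial-complex proof over $\mathbb{R}$, I would exhibit the labeled $\Delta$-complex in Figure~\ref{fig-non-grope} (middle), check properties~(0), ($+1$), ($-1$) of Definition~\ref{def-simplicial-complex-proof}, and verify that its marked face can be excised over $\mathbb{R}^*$ — this should follow because the underlying space, although not a grope over $\mathbb{R}^*$, is built from a surface and a $6$-gon-type piece whose boundary covers a triangle edge, and the excision computation reduces (as in Remark~\ref{rem-surface} and Proposition~\ref{p-grope}) to removing a cube via $x^3=1$ over $\mathbb{R}^*$. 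Then invoke Theorem~\ref{th-master-theorem-general}.

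\emph{No grope-tiling proof over $\mathbb{R}$.} Here the idea, following the pattern of Propositions~\ref{p-Hesse} and~\ref{p-non-grope}'s cousins, is to produce a field $\Bbbk$ with $\Bbbk^*\hookrightarrow\mathbb{R}^*$ over which the theorem \emph{fails} — but such that a grope-tiling proof over $\mathbb{R}$ would, via the appropriate analogue of Corollary~\ref{cor-passing-subgroup} restricted to gropes, force truth over $\Bbbk$. The natural candidate is $\mathbb{F}_5$ (or $\mathbb{F}_5(X)$ to make it infinite): $\mathbb{F}_5^*$ is cyclic of order $4$, so $\mathbb{F}_5(X)^*\cong \mathbb{Z}/4\mathbb{Z}\oplus(\text{free abelian})$, which does \emph{not} embed into $\mathbb{R}^*\cong\mathbb{Z}/2\mathbb{Z}\oplus\mathbb{R}$ because of the $4$-torsion. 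So Corollary~\ref{cor-passing-subgroup} cannot be applied directly; the subtlety — and the main obstacle — is that the obstruction must come from the grope structure itself, not merely from subgroup embedding. The key observation is that a \emph{generalized grope over $\mathbb{R}^*$} only ever uses surfaces glued along coverings of odd degree $k$ (odd numbers are exactly the torsion-coprime integers over $\mathbb{R}^*$, cf. Definition~\ref{def-grope}); hence the \emph{same} $\Delta$-complex is automatically a generalized grope over $\mathbb{F}_5^*$ as well, since every odd $k>1$ satisfies $\gcd(k,4)=1$ and so is torsion-coprime over $\mathbb{F}_5^*$ too. Therefore, by Corollary~\ref{cor-grope} applied over $\mathbb{F}_5$, any grope-tiling proof over $\mathbb{R}$ of this incidence theorem is simultaneously a grope-tiling proof over $\mathbb{F}_5$, forcing the theorem to hold over $\mathbb{F}_5$ (and, after tracking the auxiliary constructions, over the infinite field $\mathbb{F}_5(X)$, handling the finite-field caveat as in the remark after Proposition~\ref{p-excision} since the vertex set is small).

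\emph{The counterexample over $\mathbb{F}_5$.} It remains to disprove the incidence theorem over $\mathbb{F}_5$. Over $\mathbb{F}_5^*$ (cyclic of order $4$) pick $t$ of order $4$, so $t^2=-1\ne1$ but $t^4=1$. Choosing $A,B,C,O$ in general position in $\mathbb{F}_5^2$ and then $D,E,F,G,H,I,P_1,\dots,P_6$ so that $[AD/BD]=[BE/CE]=[CF/AF]=t$ and $[AG/BG]=[BH/CH]=[CI/AI]=t^{-1}$ — which makes the complete-quadrilateral relations $D\in HI$, $E\in IG$, $F\in GH$ hold by Menelaus — and assigning the $P_i$-parameters around the $6$-cycle so that the product telescopes to $t^{6}=t^{2}=-1\ne1$... wait: one wants $P_7=P_1$, i.e. the $6$-cycle \emph{closes}; this forces the product of the six Menelaus ratios to be $1$, and with the complete-quadrilateral data that product equals $\bigl([AD/BD][BE/CE][CF/AF]\bigr)^{2}=t^{2}=-1\ne1$, a contradiction — so instead one should arrange the six cyclic parameters directly (as in the figure's bottom labeling assigning elements of $\mathbb{F}_5$ to oriented edges) so that the loop closes \emph{and} the product around the removed face is $\ne1$, exactly the failure of excision over $\mathbb{F}_5^*$. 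Concretely I expect the explicit $\mathbb{F}_5$-labeling of oriented edges shown in Figure~\ref{fig-non-grope} (bottom) to satisfy conditions~(E) and~(F) of Definition~\ref{def-excised} for every face except the marked one while violating $U(a_0b_0)U(b_0c_0)U(c_0a_0)=1$; translating this labeling back (as in the "now assume... cannot be excised" direction of Proposition~\ref{p-excision}) into points and lines over $\mathbb{F}_5$ with no three vertex-points collinear yields a configuration satisfying all hypotheses of Example~\ref{ex-non-grope} but with $D\notin EF$. This establishes falsity over $\mathbb{F}_5(X)$ and completes the proof.

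The main obstacle, and the only genuinely new point compared to earlier propositions, is the middle step: one cannot use Corollary~\ref{cor-passing-subgroup} as a black box (the relevant group does not embed), and must instead exploit that the combinatorial \emph{definition} of a grope over $\mathbb{R}^*$ uses only odd gluing degrees, so that the very same complex is a grope over $\mathbb{F}_5^*$. Verifying the explicit $\mathbb{F}_5$-counterexample and that it arises from a legitimate labeled complex (all vertex-points in general position, property~($-1$) genuinely satisfiable over $\mathbb{F}_5^2$ — here the small number of vertices is what rescues us) is routine but must be done carefully, precisely because the point of Example~\ref{ex-6-gon} was that general-position arguments are delicate.
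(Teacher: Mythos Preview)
Your third part (no grope-tiling proof over $\mathbb{R}$) is correct and matches the paper's approach exactly: a generalized grope over $\mathbb{R}^*$ uses only odd covering degrees, odd integers are also torsion-coprime over $\mathbb{F}_5(X)^*$, hence the same complex is a grope over $\mathbb{F}_5(X)^*$; then Corollary~\ref{cor-grope} and Remark~\ref{rem-bijective} force truth over $\mathbb{F}_5(X)$, which is contradicted by the explicit $\mathbb{F}_5^*$-labeling showing that the marked face cannot be excised (via Proposition~\ref{p-excision}). Your eventual landing on ``the explicit $\mathbb{F}_5$-labeling of oriented edges'' is the right endgame; the wandering attempt to build a counterexample by hand is unnecessary.

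There is, however, a genuine gap in your first part, and it is the conceptual heart of the example. You write that excision over $\mathbb{R}^*$ ``reduces \dots\ to removing a cube via $x^3=1$.'' This is wrong, and if it were right the proposition would be false: any complex whose excision reduces to $x^k=1$ with $k$ odd is, essentially by definition, a generalized grope over $\mathbb{R}^*$, so the theorem \emph{would} have a grope-tiling proof. The paper's mechanism is different and is precisely what makes this example work. From the hexagon $ABCABC$ (the recursion uses $D,E,F,D,E,F$, not $D,E,F,G,H,I$ as you assumed) one gets
\[
\bigl(U(ADB)\,U(BEC)\,U(CFA)\bigr)^2=1,
\]
so the product has absolute value $1$. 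From the three non-marked faces of the square $ABAB$ (this is where $G,H,I$ enter, encoding the complete-quadrilateral relations) one gets that the product is a square times a known positive quantity, hence positive. The two together give $U(ADB)U(BEC)U(CFA)=1$ over $\mathbb{R}^*$. So the excision over $\mathbb{R}^*$ uses the \emph{order} on $\mathbb{R}^*$ (via ``$x^2=1$ and $x>0 \Rightarrow x=1$''), not the odd-torsion mechanism of gropes. This is exactly why $\mathbb{F}_5^*$ is the relevant test group: it has the same odd-torsion behavior as $\mathbb{R}^*$ but contains an element of order $4$, so the positivity half of the argument fails there.

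You also misread the recursion in Example~\ref{ex-non-grope}: ``etc.'' means repeat $D,E,F$, giving the $6$-gon property ($k=2$ in the $3k$-gon language), not a cycle through all six points. Your own ``wait'' paragraph stumbles onto the square $\bigl([AD/BD][BE/CE][CF/AF]\bigr)^2$, which is the right exponent; the earlier ``cube'' was an error.
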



\begin{proof}
This incidence theorem is generated by the labeled $\Delta$-complex shown
in Figure~\ref{fig-non-grope} to the middle, where the marked face is labeled by $L_1$. 
To get a simplicial-complex proof over $\mathbb{R}$, it suffices to  
check property~(*) in Definition~\ref{def-simplicial-complex-proof}
and apply Remark~\ref{rem-Delta-complex}. 

So, let us show that the marked face can be excised over $\mathbb{R}^*$.
Take a 
function $U\colon\vec E\to \mathbb{R}^*$ satisfying conditions~(E) and~(F) of Definition~\ref{def-excised}.
Denote by $U(aXb)$ its value at an oriented edge $ab$ labeled with $X$ to avoid ambiguity. Multiplying the equations from condition~(F) for six oriented faces of the hexagon $ABCABC$, we get $(U(ADB)U(BEC)U(CFA))^2=1$. Hence $|U(ADB)U(BEC)U(CFA)|=1$.  Multiplying the equations from condition~(F) for three non-marked oriented faces of the square $ABAB$, we get
$U(ADB)^{-1}U(BEC)U(CFA)U(AGB)^2=1$. Hence $U(ADB)U(BEC)U(CFA)>0$. Therefore, $U(ADB)U(BEC)U(CFA)=1$. Thus the marked face can be excised over $\mathbb{R}^*$, and the incidence theorem is true over $\mathbb{R}$ by Theorem~\ref{th-master-theorem-general}.

Assume that there is a grope-tiling proof over $\mathbb{R}$. Consider the field $\mathbb{F}_5(X)$. 
    The group $\mathbb{F}_5(X)^*$ is the direct sum of the free Abelian group with countably many generators 
    and a cyclic group $\mathbb{F}_5^*$ with four elements generated by $2$. Hence for each odd $k$ the equation $x^k=1$ has a unique solution $x=1$ in $\mathbb{F}_5(X)^*$. Thus, any odd number \bluenew{$k>1$} is torsion-coprime over $\mathbb{F}_5(X)^*$, just like in $\mathbb{R}^*$. Hence any generalized grope over $\mathbb{R}^*$ is also a generalized grope over $\mathbb{F}_5(X)^*$.
By Corollary~\ref{cor-grope} and Remark~\ref{rem-bijective}, the incidence theorem must be true over~$\mathbb{F}_5(X)$. 

This contradicts Proposition~\ref{p-excision} because the marked face in Figure~\ref{fig-non-grope} to the middle \emph{cannot} be excised over $\mathbb{F}_5(X)^*$.
Indeed, to each oriented edge $ab$ assign an element $U(ab)\in\mathbb{F}_5^*$ as shown in Figure~\ref{fig-non-grope} to the bottom (and set $U(ba):=U(ab)^{-1}$ to fit condition~(E)). Then condition~(F) holds for each 
face but 
one. Thus, there is no grope-tiling proof over $\mathbb{R}$.
\end{proof}

In this proof, the field $\mathbb{F}_5(X)$ can be replaced with $\mathbb{Q}[i]$ but not $\mathbb{F}_2(X)$, $\mathbb{F}_3(X)$, nor $\mathbb{F}_4(X)$. The reason is that the multiplicative group of the field must contain an element of order $4$ but no elements of odd order (this can be seen from a careful analysis of the argument). 


\begin{problem} \label{problem-c-excision} Prove that if a face $a_0b_0c_0$ of a 2-dimensional simplicial complex can be excised over $\mathbb{C}^*$, then there is a simplicial mapping of a triangulated closed orientable surface to the complex such that the preimage of $a_0b_0c_0$ consists of a single face. Consequently, if an incidence theorem has a simplicial-complex proof over $\mathbb{C}$, then it has a surface-tiling proof.
\end{problem}


\begin{problem} \label{problem-excision} Characterize all simplicial complexes and all their faces that can be excised over the multiplicative group of a given field.
\end{problem}

\section{Noncommutative geometry}
\label{sec:skew}

Over a \emph{skew field}, the Master Theorem remains true if the underlying surface is a sphere, but becomes false for surfaces of higher genus. 

\bluenew{Before making this precise, let us discuss this subject informally. In a skew field, the addition is commutative, but multiplication is not anymore. A typical example of a skew field are the quaternions. In a noncommutative setup, all the definitions remain almost literally the same, only we 
need to specify the multiplication order. As a result, one ends up with the notions of left or right projective lines and planes. We stick to the rule to multiply by scalars on the left and by matrices on the right. This determines all lines and planes to be left ones. }


\bluenew{More precisely, we}
fix a skew field $\mathbb{F}$, that is, a noncommutative division ring. Introduce the space $\mathbb{F}^3:=\{(x,y,z):x,y,z\in\mathbb{F}\}$.  Given $(x,y,z)\in\mathbb{F}^3\setminus\{(0,0,0)\}$, the subset $\{(tx,ty,tz):t\in\mathbb{F}\}$ is called a \emph{one-dimensional left subspace} (or 
\emph{sub-module}) of $\mathbb{F}^3$. The set of all 
such subspaces is called the \emph{(left) projective plane $P^2$ over~$\mathbb{F}$}, and each such subspace is viewed as a \emph{point} of $P^2$. Given $(a,b,c)\in\mathbb{F}^3\setminus\{(0,0,0)\}$, the subset $\{(x,y,z)\in\mathbb{F}^3:xa+yb+zc=0\}$ is called a \emph{two-dimensional left subspace} of $\mathbb{F}^3$, and also a \emph{(left) line} on $P^2$. 
The \emph{affine plane} $\mathbb{F}^2$ over $\mathbb{F}$ embeds into $P^2$ in the usual way. 
Incidence theorems and elementary surface-tiling proofs are then defined analogously to Section~\ref{sec:preliminaries}.


By an \emph{elementary sphere-tiling proof} we mean a particular case of an elementary surface-tiling proof, when the closed orientable surface in Definition~\ref{def-elementary-surface-tiling-proof} is a sphere. A \emph{sphere-tiling proof} is then defined analogously to a surface-tiling proof (notice that auxiliary constructions are well-defined because any skew field is infinite). 
Analogously, one defines a torus-tiling proof.

\begin{theorem}[Noncommutative Master Theorem] 
    \label{th-non-commutative-master-theorem}
If an incidence theorem with some matrix 
has a sphere-tiling proof, then it is true over any skew field. 
\end{theorem}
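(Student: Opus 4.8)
The plan is to mimic the proof of the Elementary Lemma (Lemma~\ref{l-elementary-lemma}), tracking carefully where commutativity of multiplication was used and showing that on a sphere it can be avoided. Recall that for an elementary surface-tiling proof one takes a sequence of points and lines with incidence matrix $M$, moves $P_1,\dots,P_m$ into the affine plane $\mathbb{F}^2$ by a projective transformation (legitimate since any skew field is infinite), and for every non-marked face $abc$ writes the Menelaus relation
\[
\left[\frac{P_{p(a)}P_{p(ab)}}{P_{p(b)}P_{p(ab)}}\right]\cdot
\left[\frac{P_{p(b)}P_{p(bc)}}{P_{p(c)}P_{p(bc)}}\right]\cdot
\left[\frac{P_{p(c)}P_{p(ca)}}{P_{p(a)}P_{p(ca)}}\right]=1,
\]
where now $[AB/CB]$ denotes the unique $k\in\mathbb{F}$ with $A-B=(C-B)k$ (scalars on the right, by the convention fixed in the excerpt). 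The surface-tiling proof with contradiction, case distinction and auxiliary constructions reduces everything, as usual, to the elementary case, so it suffices to prove the statement for an elementary sphere-tiling proof: one must deduce the Menelaus relation for the marked face from the relations for all other faces.

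First I would verify that Menelaus's theorem holds over a skew field in the above ordered form; this is classical and is surely among the facts collected in Appendix~\ref{sec:auxiliary}, so I would just cite it. The key combinatorial step is then: for a triangulated \emph{sphere}, the product of the face relations, taken over all faces but one in a suitable order and with a suitable choice of orientations and starting vertices, telescopes to the relation for the remaining face. Over a commutative group this is the standard "boundary of the fundamental 2-cycle vanishes'' computation. Over a noncommutative group one needs an ordered version: I would use the fact that a triangulated 2-sphere can be built by starting from one triangle and attaching the remaining triangles one at a time, each new triangle glued along one edge (or along a connected path of edges) of the current disk, so that at every stage the boundary of the current sub-disk is a single cycle; this is just the existence of a shelling of the 2-sphere, or equivalently a spanning tree of the dual graph rooted at the marked face. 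Reading off the boundary word of the growing disk and inserting each new face relation at the place where that face's boundary path sits inside the current boundary word, one obtains, after all faces are attached, that the boundary word of the whole sphere --- which is the relation associated to the marked face, traversed once --- equals $1$ in $\mathbb{F}^*$. Cancellation of each interior edge $U(ab)U(ba)=1$ is still valid since it involves only an element and its two-sided inverse, so condition~(E) suffices.

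I expect the main obstacle to be bookkeeping the orientations and base-points so that the noncommutative product literally telescopes: one must choose, consistently along the shelling, the cyclic starting vertex of each face's boundary path to match its occurrence inside the current boundary word, and check that inserting a relation of the form $xyz=1$ into a word at the segment equal to (a cyclic rotation of) $x^{-1}y^{-1}z^{-1}$ does the right thing. This is exactly the place where the genus matters: for a surface of higher genus the dual graph has cycles, equivalently the $1$-skeleton has essential loops, and no shelling by single-edge attachments exists --- one is forced at some point to glue a face along a \emph{disconnected} portion of the boundary, at which point the two pieces of boundary word sit on opposite sides of the inserted relation and cannot be brought together without commuting. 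This is what makes the theorem fail beyond the sphere (and is the content of Example~\ref{ex-pappus}, the torus not working over a skew field). Finally, having obtained the Menelaus relation for the marked face $a_0b_0c_0$, I would conclude exactly as in Lemma~\ref{l-elementary-lemma}: by Menelaus over $\mathbb{F}$ the three points $P_{p(a_0b_0)},P_{p(b_0c_0)},P_{p(c_0a_0)}$ are collinear, one of them is $P_1$, the other two lie on $L_1$ and are distinct from $P_1$, hence $P_1\in L_1$.
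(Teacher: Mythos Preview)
Your approach is correct and essentially the same as the paper's: the paper removes the marked face to obtain a triangulated disc and applies the Van Kampen lemma (Lemma~\ref{l-plus}), whose proof via free faces (Lemma~\ref{l-free}) is precisely your shelling/telescoping argument, together with the noncommutative Menelaus theorem (Lemma~\ref{l-menelaus}) to start and finish. One minor slip: you state the ratio convention as scalars on the right, but the paper fixes $Y-X=k(Z-X)$ with scalars on the left; this does not affect the argument.
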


\begin{proposition}\label{p-noncommutative-Pappus} 
    Pappus' theorem 
    has a torus-tiling proof but is false over any skew field. 
\end{proposition}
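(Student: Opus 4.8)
I would prove the two assertions separately.

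\emph{A torus-tiling proof of Pappus' theorem.} For this I would invoke the tiling proof of Example~\ref{ex-pappus} carried out in Section~\ref{ssec:examples}. After the auxiliary constructions of $P_{10},P_{11},P_{12}$ and $P_1',P_9',P_{11}',P_{12}',L_4'$, that proof splits into three cases; Case~3 uses no tiling at all (it reduces to a contradiction with the incidence axiom), while Cases~1 and~2 each use an elementary surface-tiling proof whose underlying surface is the torus of Figure~\ref{fig-pappus} (a hexagon with opposite sides identified, refined to a genuine triangulation via Remark~\ref{rem-triangulation}). Hence every tiling occurring there is a torus, so the whole thing is a \emph{torus}-tiling proof. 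A tiling proof is a purely combinatorial datum, so its existence does not depend on the ground field, and the auxiliary constructions are legitimate because every skew field is infinite. What fails over a skew field is not the existence of this combinatorial object but the deduction attached to it: multiplying the Menelaus relations around the torus uses commutativity, which is exactly why Theorem~\ref{th-non-commutative-master-theorem} is restricted to spheres.

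\emph{Pappus' theorem is false over any skew field.} One route is to quote the classical fact that a Desarguesian projective plane is Pappian if and only if its coordinate division ring is commutative (see, e.g., \cite{Hartshorne}); since a skew field is noncommutative by definition, $P^2$ over it is not Pappian. I would prefer, though, an explicit counterexample. Coordinatize $P^2$ over a skew field $\mathbb F$ with the left-scalar conventions of Section~\ref{sec:skew}, take $L_2=\{z=0\}$ and $L_3=\{y=0\}$, and place $P_2=[0:1:0]$, $P_3=[1:1:0]$, $P_4=[p:1:0]$ on $L_2$ and $P_5=[0:0:1]$, $P_6=[1:0:1]$, $P_7=[q:0:1]$ on $L_3$, with $p,q\in\mathbb F\setminus\{0,1\}$ to be chosen. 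Computing joins and meets by ordinary linear algebra gives $P_8=P_2P_6\cap P_3P_5=[1:1:1]$, then $P_9=P_3P_7\cap P_4P_6$, the line $L_4=P_8P_9$, and finally $P_1=L_4\cap P_4P_5$ and $L_1=P_2P_7$; chasing the noncommutative arithmetic, the desired incidence $P_1\in L_1$ reduces to the single identity $p\,R=R\,p$ with $R:=(q-p)^{-1}(q-1)$. Since $\mathbb F$ is noncommutative, fix $x,y\in\mathbb F$ with $xy\ne yx$ (so automatically $x,y\notin\{0,1\}$), put $p:=x$ and $q:=(xy-1)(y-1)^{-1}$; a short check shows $R=(q-p)^{-1}(q-1)=y$, that $p\,R\ne R\,p$, that $q\notin\{0,1,p\}$, and that all intermediate points and lines are well defined (e.g.\ $P_8\ne P_9$ and $L_4\ne P_4P_5$). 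The resulting configuration satisfies the incidence matrix of Pappus' theorem but violates its conclusion, so the theorem fails over~$\mathbb F$.

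\emph{Where the work lies.} The first half is just bookkeeping over the proof already present in Section~\ref{ssec:examples}. For the second half, the only real task is to carry the left/right multiplication conventions of Section~\ref{sec:skew} consistently through the linear algebra of joins and meets, and to confirm that the chosen $p,q$ avoid every degeneracy --- routine but error-prone. Conceptually, the reason the torus (unlike the sphere) fails over a skew field is that the boundary of the removed face represents the commutator $aba^{-1}b^{-1}$ in the fundamental group of the once-punctured torus, which is free on two generators $a,b$; this obstruction is automatically trivial for an abelian target group --- hence over a commutative field, and for a sphere, where the fundamental group is trivial --- but not for the nonabelian group $\mathbb F^*$, and the explicit counterexample above is precisely the geometric realization of a nontrivial such commutator.
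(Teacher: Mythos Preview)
Your first half matches the paper exactly: both point to the torus-tiling proof of Example~\ref{ex-pappus} given in Section~\ref{ssec:examples}. For the second half the routes diverge. The paper gives a \emph{tiling disproof}: it picks $u,v\in\mathbb{F}$ with $uv\ne vu$, assigns skew-field values $U(ab)$ to the oriented edges of the torus hexagon (Figure~\ref{fig-quaterionic-pappus}) so that condition~(F) of Lemma~\ref{l-plus} holds on every face but one, and then realizes these values as Menelaus ratios via Lemma~\ref{l-menelaus} on an actual triangle $P_{10}P_{11}P_{12}$. This reuses the very torus from the first half and is the same template as the disproofs in Propositions~\ref{p-9-gon} and~\ref{p-non-grope}, so it fits the paper's theme and avoids all coordinate bookkeeping.

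Your coordinate approach is the classical alternative and is sound in outline, but the reduction you state is not right. Carrying the left-module conventions of Section~\ref{sec:skew} through your coordinates, one finds $P_9=(q+y_9,y_9)$ with $y_9=(q-1)(p-1)^{-1}$, the parameter on $L_4$ at $P_1$ is $s=1-p^{-1}$, and the incidence $P_1\in L_1$ becomes $(p-1)(q-1)(p-1)^{-1}=p(q-1)p^{-1}$, which is equivalent simply to $pq=qp$, not to $pR=Rp$ with $R=(q-p)^{-1}(q-1)$. So you can take $p=x$, $q=y$ with $xy\ne yx$ directly; the detour through $R$ and the special choice $q=(xy-1)(y-1)^{-1}$ is unnecessary (and, as written, rests on an incorrect intermediate claim). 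The paper's tiling route sidesteps exactly this kind of error-prone arithmetic.
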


\begin{corollary} Pappus' theorem (Example~\ref{ex-pappus}) has no sphere-tiling proof.
\end{corollary}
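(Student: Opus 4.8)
The plan is a short deduction by contradiction from the two results immediately preceding the statement, so I expect essentially no obstacle; the only point worth spelling out is that the hypotheses of Theorem~\ref{th-non-commutative-master-theorem} and Proposition~\ref{p-noncommutative-Pappus} are genuinely in conflict, i.e. that at least one skew field exists.

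First I would assume, for contradiction, that Pappus' theorem (Example~\ref{ex-pappus}) admits a sphere-tiling proof. By the Noncommutative Master Theorem~\ref{th-non-commutative-master-theorem}, this forces Pappus' theorem to hold over \emph{every} skew field. On the other hand, Proposition~\ref{p-noncommutative-Pappus} asserts that Pappus' theorem fails over any skew field; since skew fields do exist (Hamilton's quaternions $\mathbb{H}$ being the standard example), we obtain a concrete skew field over which the incidence theorem is false. These two conclusions are incompatible, so the assumption was untenable: Pappus' theorem has no sphere-tiling proof.

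The main (indeed only) difficulty is packaged into the two ingredients we are permitted to use — the validity of the Noncommutative Master Theorem for sphere-tiling proofs and the failure of Pappus' theorem over noncommutative division rings. Granting these, the corollary is immediate, and it neatly contrasts with Proposition~\ref{p-noncommutative-Pappus}'s first half: Pappus \emph{does} have a torus-tiling proof, so the genus of the tiling surface is what obstructs a noncommutative argument here.
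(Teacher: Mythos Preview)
Your argument is correct and matches the paper's intended reasoning: the corollary is stated without proof there, being immediate from the Noncommutative Master Theorem and Proposition~\ref{p-noncommutative-Pappus}. Your explicit remark that skew fields exist (e.g.\ the quaternions) is a harmless clarification ensuring the contradiction is non-vacuous.
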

    
These results are not surprising. It is well-known that Desargues' theorem (generated by the simplest triangulation of the sphere) reflects the associativity of the ground ring, and Pappus' theorem (generated by a torus) reflects the commutativity \cite[Theorem~6.1]{Hartshorne}. Thus the former theorem is true over any skew field, whereas the latter is not.
Any triangulation of the sphere can be obtained from the simplest one by so-called bistellar moves. One can see that they correspond to applications of Desargues' theorem; cf.~\cite[Definition 9.12]{FP22}. Thus any incidence theorem generated by a triangulated sphere should be true over a skew field. However, the applications of Desargues' theorem here require additional general position arguments, which are hard to make rigorous. Thus we prefer a direct combinatorial proof based on the following well-known lemmas. Their short proofs are presented in Appendix~\ref{sec:auxiliary}.

\begin{lemma} [Noncommutative Menelaus's theorem] \textup{(See~\cite[Theorem~4.12]{RRS})}\label{l-menelaus} \bluenew{Assume that} points $A,B,C$ of the affine plane over a skew field do not lie on one 
line.
Let other points $D,E,F$ lie on the 
lines $AB,BC,CA$ respectively. Then $D,E,F$ lie on one 
line if and only if
\begin{equation}\label{eq-non-commutative-menelaus}
        \left[\frac{AD}{BD}\right]\cdot
        \left[\frac{BE}{CE}\right]\cdot
        \left[\frac{CF}{AF}\right]=1,
\end{equation}
where $[YX/ZX]$ denotes the unique element $k$ of the skew field such that $Y-X=k(Z-X)$.
\end{lemma}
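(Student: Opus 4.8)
The plan is to use $A,B,C$ as an affine frame and turn the statement into a $3\times 3$ homogeneous linear system over $\mathbb{F}$. Since $A,B,C$ do not lie on one line, the vectors $B-A$ and $C-A$ are left-linearly independent, hence a basis of the $2$-dimensional left $\mathbb{F}$-vector space of translations of $\mathbb{F}^2$. Consequently an expression $\lambda A+\mu B+\nu C$ with $\lambda+\mu+\nu=1$ is a well-defined point, $\lambda A+\mu B+\nu C=0$ with $\lambda+\mu+\nu=0$ forces $\lambda=\mu=\nu=0$, and --- the fact I shall actually use --- $D,E,F$ are collinear if and only if there exist scalars $(\lambda,\mu,\nu)\ne(0,0,0)$ with $\lambda+\mu+\nu=0$ and $\lambda D+\mu E+\nu F=0$ (this is just left-linear dependence of $E-D,F-D$ rewritten via $\lambda=-(\mu+\nu)$).

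First I would express $D,E,F$ in this frame. Set $\alpha:=[AD/BD]$, $\beta:=[BE/CE]$, $\gamma:=[CF/AF]$; note $\alpha\ne 1$, since $\alpha=1$ would give $A-D=B-D$, i.e. $A=B$, contrary to non-collinearity, and likewise $\beta,\gamma\ne 1$. From $A-D=\alpha(B-D)$ we get $A-\alpha B=(1-\alpha)D$, hence $D=sA+(1-s)B$ with $s:=(1-\alpha)^{-1}$ and, with the scalar on the correct side, $1-s=-s\alpha$. Symmetrically $E=tB+(1-t)C$ with $t:=(1-\beta)^{-1}$, $1-t=-t\beta$, and $F=uC+(1-u)A$ with $u:=(1-\gamma)^{-1}$, $1-u=-u\gamma$.

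Next I would substitute into $\lambda D+\mu E+\nu F=0$ and collect coefficients of $A,B,C$. Because $A,B,C$ is an affine frame and $\lambda+\mu+\nu=0$, this vanishes exactly when
\[
\lambda s+\nu(1-u)=0,\qquad \lambda(1-s)+\mu t=0,\qquad \mu(1-t)+\nu u=0 .
\]
Assume first $\beta\ne 0$, so $1-t=-t\beta\ne 0$ while $s,u\ne 0$ automatically. The first and third equations give $\lambda=-\nu(1-u)s^{-1}$ and $\mu=-\nu u(1-t)^{-1}$; plugging these into the middle equation and using $s^{-1}(1-s)=-\alpha$, $(1-t)^{-1}t=-\beta^{-1}$, $1-u=-u\gamma$ collapses it to $\nu u(\beta^{-1}-\gamma\alpha)=0$. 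Since a nontrivial solution must have $\nu\ne0$ (otherwise $\lambda=\mu=0$ too), the system has a nontrivial solution if and only if $\beta^{-1}=\gamma\alpha$, i.e. $\beta\gamma\alpha=1$, i.e. $\alpha\beta\gamma=1$ (the last being the purely group-theoretic equivalence $\beta\gamma\alpha=1\Leftrightarrow\beta\gamma=\alpha^{-1}\Leftrightarrow\alpha\beta\gamma=1$). Together with the collinearity criterion this is exactly the identity \eqref{eq-non-commutative-menelaus}.

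Finally I would dispose of the degenerate case $\beta=0$, which means $E=B$: then $D,E,F$ cannot be collinear (the line through $B=E$ and $D\in AB$, $D\ne B$, is $AB$, which meets $CA$ only in $A\ne F$), while $\alpha\beta\gamma=0\ne1$, so the equivalence holds trivially; the cases $\alpha=0$ and $\gamma=0$ need no separate treatment, being already handled by the linear algebra above since $s,u\ne0$. The only genuine care needed is bookkeeping of multiplication order --- e.g. writing $1-s=-s\alpha$, not $-\alpha s$, so that $s^{-1}(1-s)=-\alpha$ --- whereas addition is used commutatively without comment; this is the sole place where working over a skew field rather than a field makes any difference.
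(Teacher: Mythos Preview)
Your argument is correct. The reduction to the $3\times 3$ linear system is sound: the three coefficient equations (1)--(3) sum to $\lambda+\mu+\nu$, so they already encode the affine constraint and you are right not to impose it separately. The bookkeeping of one-sided inverses ($1-s=-s\alpha$, hence $s^{-1}(1-s)=-\alpha$, and $(1-t)^{-1}t=-\beta^{-1}$) is done on the correct side throughout, and the final passage $\beta^{-1}=\gamma\alpha\Leftrightarrow\beta\gamma\alpha=1\Leftrightarrow\alpha\beta\gamma=1$ is just the group identity $xy=1\Leftrightarrow yx=1$. The degenerate case $\beta=0$ (i.e.\ $E=B$) is handled cleanly; note that $\alpha\ne 1$ is automatic from $A\ne B$, as you say, and no ``other points'' hypothesis is needed for it.

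The paper takes a genuinely different route: it considers the left homotheties with centers $D,E,F$ sending $A\mapsto B$, $B\mapsto C$, $C\mapsto A$, observes that their composition fixes $A$ and hence is a homothety with center $A$, and then argues that this composition fixes the line $DE$ if and only if $F\in DE$, which happens exactly when the product of the three ratios is $1$. This yields \eqref{eq-non-commutative-menelaus} after inverting. The homothety argument is shorter and more geometric, and it avoids any case analysis (no special treatment of $\beta=0$ is needed, since one never divides by $\beta$). It also meshes naturally with the paper's later use of the Van Kampen lemma, where products around faces of a triangulation play the role of composed homotheties. Your coordinate computation, on the other hand, is entirely self-contained and makes the dependence on the affine frame $A,B,C$ explicit; it would be the more natural approach if one wanted to generalize in the direction of matrices rather than transformation groups.
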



\begin{lemma}[Van Kampen lemma, easy part] \textup{(See~\cite[Lemma~11.1]{Olshanskii-89})} 
\label{l-plus} 
Let $U$ be a map of the set of oriented edges of a triangulated disc with the boundary $p_1p_2\dots p_kp_1$ to a group such that 
\begin{itemize}
    \item[\textup{(E)}] for any oriented edge $ab$, we have $U(ab)=U(ba)^{-1}$; and
    \item[\textup{(F)}] for any face $abc$, we have $U(ab)U(bc)U(ca)=1$.
\end{itemize}
Then $U(p_1p_2)U(p_2p_3)\dots U(p_kp_1)=1$. 
\end{lemma}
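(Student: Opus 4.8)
The plan is to prove this by induction on the number $N$ of triangular faces of the triangulated disc, whose boundary I write as a cycle $p_1p_2\cdots p_kp_1$. The base case $N=1$ is immediate: then $k=3$, and the asserted identity $U(p_1p_2)U(p_2p_3)U(p_3p_1)=1$ is literally hypothesis~(F).

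For the inductive step I would pick any boundary edge, say $p_1p_2$ after renumbering, and look at the unique face $T=p_1p_2v$ containing it. Deleting the open triangle $T$ (together with the boundary edge $p_1p_2$, and any other edge or vertex of $T$ that happens to lie on $\partial D$) leaves either a single triangulated disc $D'$ with $N-1$ faces --- when $v$ is an interior vertex, or when $v$ is a boundary vertex adjacent to $p_1$ or $p_2$ (the ``ear'' case) --- or two triangulated discs $D_1,D_2$ meeting only at the vertex $v=p_j$, each with fewer than $N$ faces, when $v$ is a boundary vertex otherwise. I would apply the inductive hypothesis to the piece(s) and then splice in the single relation (F) for $T$, cancelling the edges of $T$ that were on the cut by means of (E). Concretely, in the one-disc case the hypothesis gives, say, $U(p_2p_3)\cdots U(p_kp_1)U(p_1v)U(vp_2)=1$; rewriting $U(p_1v)U(vp_2)=U(p_1p_2)$ via (F) for $T$ and (E) produces a cyclic rotation of the desired word, and a cyclic rotation of a word equal to the identity in a group is again the identity (conjugate by the displaced factor). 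In the two-disc case, the two applications of the hypothesis, combined with (E), express the boundary arcs of $D_1$ and $D_2$ as single factors $U(p_jp_1)$ and $U(p_2p_j)$, respectively, and then (F) for the triangle $T=p_1p_2p_j$ in the form $U(p_1p_2)U(p_2p_j)U(p_jp_1)=1$ gives exactly the desired identity.

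I expect the only genuinely non-algebraic input to be the standard combinatorial fact that deleting a boundary triangle of a triangulated disc, or cutting it along a chord, yields triangulated disc(s) with strictly fewer faces; I would simply cite this. The part that needs the most care --- though it is pure bookkeeping --- is tracking edge orientations so that (E) correctly pairs the two occurrences of each identified edge, and so that the orientation of $T$ used in (F) is compatible with the direction in which the boundary is traversed. Noncommutativity of the target group is not an obstacle: every cancellation in the argument is between two \emph{adjacent} factors of a word (an occurrence of $g\cdot g^{-1}$) or a single conjugation, and at no point are two factors required to commute.
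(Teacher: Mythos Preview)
Your argument is correct, and it follows the same overall scheme as the paper: induction on the number of faces, removing one face incident to the boundary at each step and splicing in relation~(F) for that face. The difference lies in how the face is chosen. The paper first proves a separate shellability lemma (every triangulated disc with more than one face has a \emph{free} face, i.e.\ one with either two boundary edges or one boundary edge and the opposite vertex interior), and always removes such a face; this guarantees that the remainder is again a single triangulated disc, so the induction has only the two subcases you call ``interior vertex'' and ``ear.'' You instead remove the face on an \emph{arbitrary} boundary edge and explicitly treat the additional case where the third vertex is a non-adjacent boundary vertex, splitting the disc into two smaller discs wedged at that vertex.

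Both routes work. Yours is more self-contained, since it does not rely on an auxiliary shellability statement; the paper's route keeps the inductive step slightly cleaner by never encountering a split. The bookkeeping you flag (edge orientations, cyclic rotation being a conjugation) is exactly what is needed in either version and is handled correctly in your sketch.
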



\begin{proof}[Proof of Theorem~\ref{th-non-commutative-master-theorem}]
    It suffices to consider an elementary sphere-tiling proof. 
    Let the incidence theorem with a matrix $M$ have such a proof. 
    Take a 
    sequence of points $P_1,\dots,P_m$ 
    and lines $L_1,\dots,L_n$ 
    having incidence matrix $M$. Since a skew field is infinite by Wedderburn's little theorem, 
    we may assume that $P_1,\dots,P_m$ lie in the affine plane over the skew field. 
    
    Define a 
    function 
    on the set of oriented edges by the formula $U(ab):=[P_{p(a)}P_{p(ab)}/P_{p(b)}P_{p(ab)}]$. Clearly, $U(ba)=U(ab)^{-1}$. By properties~($+1$) and ($-1$) from Definition~\ref{def-elementary-surface-tiling-proof} and Lemma~\ref{l-menelaus}, we get $U(ab)U(bc)U(ca)=1$ for each face $abc$ distinct from the marked face $a_0b_0c_0$. 
    
    Remove the marked face $a_0b_0c_0$ from the triangulation. We get a triangulation of a disc with three boundary vertices $a_0,b_0,c_0$. By Lemma~\ref{l-plus}, we get  
    $U(a_0b_0)U(b_0c_0)U(c_0a_0)=1$. Again by Lemma~\ref{l-menelaus}, the points $P_{p(a_0b_0)}$, $P_{p(b_0c_0)}$, and $P_{p(c_0a_0)}$ lie on one line. Hence $P_1\in L_1$.
\end{proof}

\begin{proof}[Proof of Proposition~\ref{p-noncommutative-Pappus}]
    A torus-tiling proof of Example~\ref{ex-pappus} was given in Section~\ref{ssec:examples}. 
    
    It is well-known that Pappus' theorem is false over any skew field \cite[Theorem~6.1]{Hartshorne}, but let us give a short tiling disproof. 
    Take two elements $u$ and $v$ such that $uv\ne vu$. Consider the tiling in Figure~\ref{fig-pappus} to the top right. To each oriented edge $ab$ assign an element $U(ab)$ of the skew field as shown in Figure~\ref{fig-quaterionic-pappus} (and set $U(ba):=U(ab)^{-1}$ to fit condition~(E) from Lemma~\ref{l-plus}). Then condition~(F) holds for each face but one. Take three lines  $L_2$, $L_3$, and $L_4$ forming a triangle with vertices $P_{10}$, $P_{11}$, $P_{12}$. 
    Take points $P_1,\dots,P_9$ such that 
    $[P_{p(a)}P_{p(ab)}/P_{p(b)}P_{p(ab)}]=U(ab)$ for each oriented edge $ab$ (if $U(ab)=1$ then set $P_{p(ab)}$ to be the improper point of the line $P_{p(a)}P_{p(b)}$). By Lemma~\ref{l-menelaus}, we get a counterexample to Pappus' theorem.
\end{proof}

\begin{figure}[htbp]
    \centering
       \includegraphics[width=0.4\textwidth]{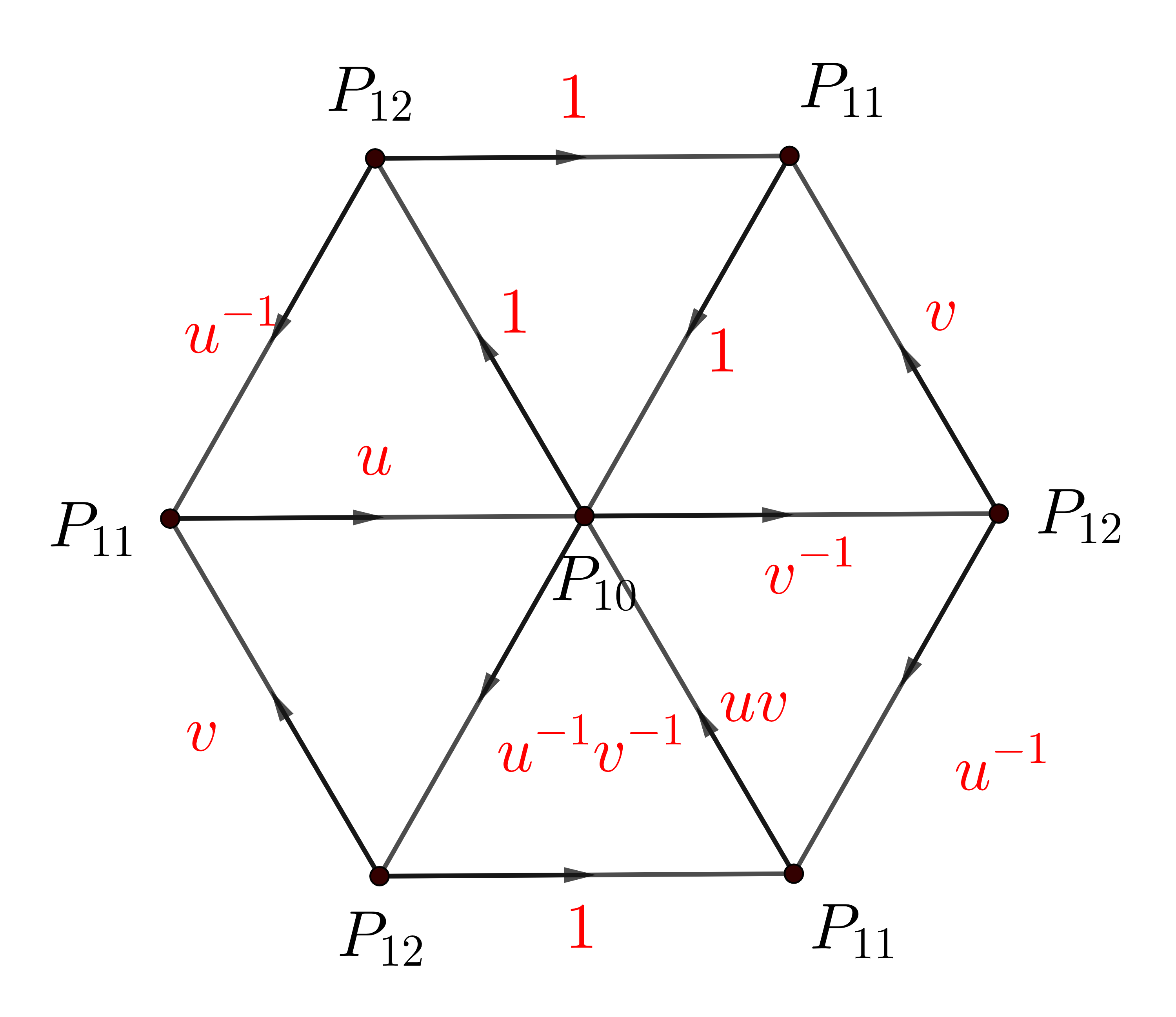}
    \caption{Tiling of a torus and the elements of the skew field assigned to the oriented edges. 
    This leads to a counterexample to Pappus' theorem if $uv\ne vu$. See the proof of Proposition~\ref{p-noncommutative-Pappus}.}
    \label{fig-quaterionic-pappus}
\end{figure}

The definition of a simplicial-complex proof (Definition~\ref{def-simplicial-complex-proof}) and the Master Theorem over a given field (Theorem~\ref{th-master-theorem-general}) remain literally the same over a skew field instead of a field. What is new is that the faces of an orientable surface of a positive genus $g$ cannot be excised anymore. (See Figure~\ref{fig-quaterionic-pappus}.) One way to restore this excision is to attach discs along $g$ disjoint simple closed curves that do not split the surface.
The gropes defined in \cite{Behrens-etal-21} will also do the job.  
It is interesting to find counter-examples analogous to the ones in Section~\ref{sec:real}.

\begin{problem}\label{pr-noncommutative}
    Do the following classes of incidence theorems coincide over a given skew field:
    \begin{itemize}
        \item the ones that are true over the skew field;
        \item the ones that have a simplicial-complex proof over the skew field;
        \item the ones that have a sphere-tiling proof?
    \end{itemize}
\end{problem}


\begin{remark}\label{rem-gauge}
Conditions~(E) and~(F) from Lemma~\ref{l-plus} arise 
in lattice gauge theory; see \cite[Section~1]{S22} for a concise elementary introduction. The value $U(ab)$ satisfying~(E) is interpreted as a \emph{parallel-transport operator} along an oriented edge $ab$, and condition~(F) means vanishing \emph{curvature} at the face $abc$. Thus Definition~\ref{def-excised} is interpreted as a flatness criterion: a face $a_0b_0c_0$ can be excised if vanishing curvature at all other faces implies vanishing curvature at $a_0b_0c_0$. 
\end{remark}

\appendix

\section{Auxiliary results from noncommutative algebra}
\label{sec:auxiliary}

Here we prove some standard results from noncommutative algebra used above. We prefer to include these concise proofs instead of references to much more general results in the literature. Notice that these proofs lead to different generalizations; 
see \cite[Lemma~20]{S22} and \cite[p.~235]{Penrose-71}.

We prove Lemma~\ref{l-menelaus} by showing that the well-known proof of Menelaus's theorem using homotheties remains true over a skew field $\mathbb{F}$ (there is also a proof by direct computation \cite[Appendix~A]{arxiv-version-1}). The \emph{(left) homothety with the center $C\in\mathbb{F}^2$ and the ratio $k\in\mathbb{F}$} is the map $
\mathbb{F}^2\to\mathbb{F}^2$ given by $
A\mapsto k(A-C)+C$ 
for each $A\in \mathbb{F}^2$. Clearly, 
the point $A\in\mathbb{F}^2$,  its image $B
$, and the center $C$ lie on one 
line and $[BC/AC]=k$. A line is fixed by a homothety if and only if the line passes through the center or the ratio is $1$ (this is sufficient to show when the center is the origin, in which case a line $xa+yb+c=0$ is taken to the line $xa+yb+kc=0$).
The composition of homotheties is again a homothety unless the product of their ratios is $1$, when it is a translation (because a composition of 
maps of the form $X\mapsto kX+b$ has the same form). 

\begin{proof}[Proof of Lemma~\ref{l-menelaus}]
Consider three homotheties with centers $D$, $E$, $F$ that respectively send $A$ to $B$, $B$ to $C$, and $C$ to $A$. Their composition in order is a homothety or a translation that fixes $A$, hence it is a homothety with center $A$, possibly with ratio $1$ (in which case it is the identity). This composition fixes the line $DE$ if and only if $F$ belongs to $DE$ (since the first two homotheties certainly fix $DE$, and the third does so only if $F$ lies on $DE$). Therefore $D$, $E$, $F$ lie on one line if and only if this composition is the identity, which means that the product of the three ratios is $1$. The latter is equivalent to~\eqref{eq-non-commutative-menelaus} with the left side inverted.
\end{proof}

For the proof of Lemma~\ref{l-plus}, we need an auxiliary notion and a lemma. We say that a face of a triangulated disc is \emph{free} if it contains either two boundary edges or one boundary edge and one nonboundary vertex. (The informal meaning of this condition is that the simplicial complex remains a triangulated disc after removing the face.)

\begin{lemma}[Shellability] \label{l-free} \textup{(See \cite[Lemma~19]{S22} or \cite[Theorem~VI.6.A.]{Bing-83})} If a triangulated disc has more than one face, then it has at least two free faces.
\end{lemma}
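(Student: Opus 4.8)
The plan is to prove Lemma~\ref{l-free} by induction on the number of faces, exhibiting two free faces in any triangulated disc with at least two faces. First I would set up the induction: the base case is a disc with exactly two faces (a ``bigon'' triangulated by a single interior diagonal, or the standard square-into-two-triangles picture), where both faces visibly contain two boundary edges, hence both are free. For the inductive step, take a triangulated disc $D$ with $N>2$ faces and assume the statement for all triangulated discs with fewer faces.

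The main step is to locate one free face, delete it, and apply the inductive hypothesis. I would first argue that a free face exists. Consider the boundary cycle $p_1p_2\dots p_kp_1$. Each boundary \emph{edge} lies in exactly one face. If some face contains two (necessarily consecutive) boundary edges $p_{i-1}p_i$ and $p_ip_{i+1}$, that face is free and we are done finding one. Otherwise, every face meets the boundary in at most one edge; I would then use a counting/extremal argument: pick a boundary edge $e=p_ip_{i+1}$ and its unique face $t=p_ip_{i+1}q$; the third vertex $q$ is either a boundary vertex not adjacent to the edge (then the edges $p_iq$ and $p_{i+1}q$ split $D$ into two smaller triangulated discs $D_1,D_2$ glued along these chords, and $t$ is free for the sub-disc it belongs to and in fact free for $D$) or $q$ is an interior vertex (then $t$ has one boundary edge and one interior vertex, so $t$ is free by definition). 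In the first sub-case I would instead recurse on $D_1$: it is a triangulated disc with strictly fewer faces, so by induction it has a free face, which one checks is also free in $D$ unless $D_1$ is a single triangle — but a single triangle glued along a chord is itself a free face of $D$. Thus in every case $D$ has at least one free face $t_0$.

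Having a free face $t_0$, remove it: $D':=D\setminus t_0$ is a triangulated disc with $N-1\ge 2$ faces. By the inductive hypothesis $D'$ has two free faces $f_1,f_2$. At most one of $f_1,f_2$ can fail to be free in $D$ — a face that was free in $D'$ but not in $D$ must be adjacent to $t_0$ (its ``freeness'' data changed only along edges/vertices shared with $t_0$), and $t_0$ is adjacent to at most one other face along its unique non-boundary edge when $t_0$ has two boundary edges, or along two faces when $t_0$ has one boundary edge; in the latter case I would instead choose the reduction $D'$ so that only one face becomes non-free, or simply argue that one of $f_1,f_2$ together with $t_0$ already gives the two required free faces of $D$. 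Concretely: $t_0$ itself is free in $D$, and among $f_1,f_2$ at least one remains free in $D$ unless both are the (at most two) faces adjacent to $t_0$, a case I handle by noting that then $D$ has very few faces and can be checked directly. Hence $D$ has at least two free faces, completing the induction.

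I expect the main obstacle to be the bookkeeping in the inductive step: making precise which faces of $D'$ can lose their freeness when the deleted face $t_0$ is re-attached, and handling the configuration where $t_0$'s third vertex is a boundary vertex, which splits $D$ rather than merely trimming it. The cleanest route around this is probably to prove the slightly stronger statement by the standard shelling argument — order the faces $t_1,\dots,t_N$ so that each $t_j$ is free in $t_j\cup\dots\cup t_N$ — but for the stated lemma it suffices to track freeness only across a single deletion, and the delicate point is simply verifying that deleting one free face destroys the freeness of at most one other face (or else reduces to a case with $\le 3$ faces that is checked by hand).
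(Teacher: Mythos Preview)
Your plan has the right ingredients but there is a genuine gap in the inductive step, precisely in the case you flag as ``the main obstacle''. When the free face $t_0=p_ip_{i+1}q$ you remove has one boundary edge and the opposite vertex $q$ interior, the two faces of $D'$ sharing the edges $p_iq$ and $p_{i+1}q$ can \emph{both} lose freeness on passing back to $D$: if those neighbours are $p_iqw_1$ and $p_{i+1}qw_2$ with $w_1,w_2$ interior, then in $D$ they have no boundary edge at all. The inductive hypothesis only supplies \emph{two} free faces of $D'$, and nothing prevents them from being exactly these two neighbours. Your proposed fallback, that ``then $D$ has very few faces and can be checked directly'', is not justified; the rest of $D$ outside a neighbourhood of $t_0$ can be arbitrarily large, and having only two free faces in $D'$ does not bound the number of faces. (Also, the sentence ``$t$ is free for the sub-disc it belongs to and in fact free for $D$'' in your first step is simply false: $t$ lies in neither sub-disc and has its third vertex on the boundary, so it is not free.)

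The paper's proof avoids this difficulty by a different organisation. It argues by minimal counterexample and, crucially, uses the splitting you already discovered in your first step to get \emph{both} free faces at once: a nonfree face on a boundary edge has its third vertex on the boundary and cuts the disc into two smaller discs; each half (by minimality) contributes a free face not touching its single new boundary edge, and that face stays free in the original disc. You actually have this splitting in hand in your ``first sub-case'' but only recurse on $D_1$ to extract one free face; recursing on $D_2$ as well would finish the proof immediately and make your separate ``remove $t_0$ and recurse on $D'$'' step unnecessary.
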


\begin{proof}
\cite[Proof of Lemma~19]{S22}
Assume the converse and take a counterexample with a minimal number of faces. The counterexample has more than one face adjacent to the boundary, hence it has a nonfree face $abc$ containing a boundary edge $ab$. Then $bc$ and $ca$ are nonboundary edges and $c$ is a boundary vertex. Then $abc$ splits the disc into two non-empty discs with fewer faces. Since our counterexample is minimal, it follows that each of the two smaller discs has either a unique face or at least two free faces. If one of the smaller discs has a unique face, then the face is free in the original disc as well. If one of the smaller discs has two free faces, then at least one of them contains neither $bc$ nor $ca$, hence remains free in the original disc. We have found two free faces. This contradiction proves the lemma.
\end{proof}

\begin{proof}[Proof of Lemma~\ref{l-plus}] 
  Use induction over the number of faces. If there is a single face, then there is nothing to prove. Otherwise, let $abc$ be a free face given by Lemma~\ref{l-free}. Assume that the edge $ab$ is on the boundary and $ca$ is not. 
  Since the conclusion of the lemma is invariant under a cyclic permutation or reversal 
  of indices, we may assume $a=p_1$ and $b=p_{2}$. Then either $c=p_3$ or $c$ is not on the boundary because $abc$ is free and $ca$ is not on the boundary. 
  
  Remove the face $abc$. 
  We get a triangulated disc with one boundary vertex less ($b=p_{2}$ is deleted if $c=p_{3}$) or one boundary vertex more ($c$ is inserted between $a=p_1$ and $b=p_{2}$ if $c\ne p_{3}$). Applying the inductive hypothesis to the resulting disc and conditions~(E) and~(F), we arrive at the desired equation 
  $$
  U(p_{1}p_{2})U(p_2p_{3})
  \dots U(p_{k}p_{1})
  =
  \begin{cases}
      U(p_1p_3)U(p_{3}p_4)\dots U(p_{k}p_{1}), &\text{if }c=p_3;\\
      U(p_{1}c)U(cp_2)U(p_2p_{3})
      \dots U(p_{k}p_{1}), &\text{if }c\ne p_3;
  \end{cases}
  \qquad=1.\\[-0.8cm]
  $$
\end{proof}

\subsection*{Acknowledgments} The authors are grateful to L.~Beklemishev, J.~Buhler, S.~Fomin, A.~Izosimov, A.~Klyachko, M.~Larson, N.~Lubbes, S.~Melikhov, and A.~Talambutsa for useful discussions. 

\bibliographystyle{elsarticle-num}

\small


\noindent
\textsc{Pavlo Pylyavskyy\\
University of Minnesota, USA}
\\
\texttt{ppylyavs\,@\,umn$\cdot $edu} \quad 
\\[0.2cm]
\textsc{Mikhail Skopenkov\\
King Abdullah University of Science and Technology, Thuwal, Saudi Arabia}
\\
\texttt{mikhail.skopenkov\,@\,gmail$\cdot $com} \quad \url{https://users.mccme.ru/mskopenkov/}

\end{document}